\title{Torsion order and irrationality of complete intersections}
\author{Jan Lange}
\address{Institute of Algebraic Geometry, Leibniz University Hannover, Welfengarten 1, 30167 Hannover, Germany.}
\email{\href{mailto:lange@math.uni-hannover.de}{lange@math.uni-hannover.de}}
\author{Guoyun Zhang}
\address{Shanghai Center for Mathematical Sciences, Fudan University, Songhu Road 2005, Shanghai, China.}
\email{\href{mailto:gyzhang21@m.fudan.edu.cn}{gyzhang21@m.fudan.edu.cn}}
\date{October 28, 2025}
\subjclass[2020]{primary 14M10, 14C25, secondary 14E08, 13C40} 
\keywords{torsion order, complete intersection, rationality problem}
\newtheorem{theorem}{Theorem}[section]
\newtheorem{proposition}[theorem]{Proposition}
\newtheorem{corollary}[theorem]{Corollary}
\newtheorem{lemma}[theorem]{Lemma}
\newtheorem*{claim}{Claim}
\theoremstyle{definition}
\newtheorem{definition}[theorem]{Definition}
\newtheorem{example}[theorem]{Example}
\newtheorem{remark}[theorem]{Remark}
\newcolumntype{C}{>{$}c<{$}}
\DeclareMathOperator{\CH}{CH}
\DeclareMathOperator{\charac}{char}
\DeclareMathOperator{\Grass}{Gr}
\DeclareMathOperator{\id}{id}
\DeclareMathOperator{\Ima}{Im}
\DeclareMathOperator{\LM}{LM}
\DeclareMathOperator{\Proj}{Proj}
\DeclareMathOperator{\Spec}{Spec}
\DeclareMathOperator{\Tor}{Tor}
\DeclareMathOperator{\trdeg}{trdeg}
\DeclareRobustCommand\longtwoheadrightarrow{\relbar\joinrel\twoheadrightarrow}
\newcommand{\restr}[2]{\left.{#1}\right|_{#2}}
\newcommand{\N}{\mathbb{N}}
\newcommand{\Z}{\mathbb{Z}}
\newcommand{\C}{\mathbb{C}}
\newcommand{\aff}{\mathbb{A}}
\newcommand{\CP}{\mathbb{P}}
\numberwithin{equation}{section}
\begin{document}

\begin{abstract}
    We provide new logarithmic lower bounds for the torsion order of a very general complete intersection in projective space as well as a very general hypersurface in products of projective spaces and Grassmannians, in particular we prove their retract irrationality.
\end{abstract}

\maketitle

\section{Introduction}

The torsion order $\Tor(X)$ of an integral variety $X$ over a field $k$ is the smallest positive integer $e \in \Z_{\geq 1}$ such that a decomposition
    \begin{equation}\label{eq:torsion-order-intro}
        e \cdot \Delta_X = z \times X + Z \in \CH_{\dim X}(X \times_k X)
    \end{equation}
exists, where $z \in \CH_0(X)$ is a zero-cycle and $Z$ is a cycle on $X \times_k X$ whose support does not dominate the second factor; we set $\Tor(X) = \infty$ if no such decomposition exists. 

The notion originates from work of Bloch \cite{blo79} (using an idea of Colliot-Th\'el\`ene) and Bloch--Srinivas \cite{BS83} and has been studied for instance in \cite{CL17,Kah17,Sch21-torsion,LS24}.
Slight variants of the torsion order have been considered in \cite{ACTP} and \cite{Voi17}.

A rationally (chain) connected smooth projective variety $X$ has finite torsion order and $X$ admits a decomposition of the diagonal (meaning $\Tor(X) = 1$) if $X$ is (retract) rational or $\aff^1$-connected.
Thus the non-rationality of smooth projective (rationally connected) varieties can be shown by proving that the torsion order is strictly larger than $1$ (or equivalently disproving a decomposition of the diagonal).
This approach to the rationality problem has been successfully started by Voisin \cite{Voi15}.
The rationality problem itself has been extensively studied for a very general member of many classes of algebraic varieties using various methods. We refer the reader to the recent surveys \cite{Debarre-survey,Sch-survey} for an overview of the methods and results.

Two other properties of a smooth projective integral variety are captured by its torsion order.
If $f \colon X \longrightarrow Y$ is a generically finite morphism between smooth projective integral varieties over a field $k$, then a ``pull-push-argument'' shows that $\Tor(Y)$ divides $\deg f \cdot \Tor(X)$. In particular, $\Tor(Y)$ provides a lower bound for the degree of any unirational parametrization of $Y$.
The torsion order of a smooth projective integral $k$-variety is also the smallest positive integer such that the kernel of the degree morphism $\deg \colon \CH_0(X_L) \longrightarrow \Z$ is $e$-torsion for any field extension $L/k$, where $\Tor(X) = \infty$ if and only if no such integer exists.

In this paper, we provide new and improved lower bounds for the torsion order of very general complete intersections in projective space as well as hypersurfaces in products of projective spaces and Grassmannians.

\subsection{Complete intersections}

An argument of Rojtman \cite{Roi80} for hypersurfaces extended to complete intersections by Chatzistamatiou--Levine \cite[Proposition 4.1]{CL17} shows that the torsion order of a Fano complete intersection of multidegree $(d_1,\dots,d_s)$ in $\CP^{N+s}$ over any field divides $d_1 ! \cdot \dots \cdot d_s!$, in particular it is finite.

In the same paper, Chatzistamatiou--Levine provide a lower bound for very general Fano complete intersections by building on earlier work of Totaro \cite{Tot16} and Koll\'ar \cite{Kol95a}.
The torsion order of a very general complex complete intersection $X$ of multidegree $(d_1,\dots,d_s)$ and dimension $N$ is divisible by some prime power $p^{m'}$ if
$$
    d_i \geq p^{m'} \left\lceil \frac{N + s + 1 - d_1 - \dots - d_s + d_i}{p^{m'} + 1} \right\rceil
$$
for some $i \in \{1,\dots,s\}$ if $p$ is odd or $N$ is even, see \cite[Theorem 7.2]{CL17}. By using the Fano index $r(X) := \dim X + s + 1 - d_1 - \dots - d_s$, the statement roughly means that $\Tor(X)$ is divisible by a positive integer $m$ if some degree $d_i$ satisfies
$$
    d_i \geq m \cdot r(X) + m.
$$
For hypersurfaces, this bound was significantly improved by Schreieder \cite{Sch19JAMS,Sch21-torsion}: A very general hypersurface $X$ of degree $d$ and Fano index $r(X) > 0$ over a field $k$ has torsion order divisible by a positive integer $m$, if $m$ is invertible in $k$ and $$
    d \geq \log_2(r(X)+m) + m.
$$
We extend the bound of Schreieder to complete intersections.

\begin{theorem}\label{thm:intro-torsion-order-ci}
    The torsion order of a very general complete intersection of multidegree $(d_1,\dots,d_s)$ and positive Fano index $r$ over a field $k$ is divisible by a positive integer $m$ if $m \in k^\ast$ and there exists a degree $d_i$ such that $d_i \geq \log_{2} (r + m) + m$.
\end{theorem}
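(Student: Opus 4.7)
The plan is to reduce the theorem to an appropriate hypersurface statement by realizing the complete intersection as a hypersurface inside a lower-codimension complete intersection. After relabeling so that $d_1$ satisfies the hypothesis $d_1 \geq \log_2(r+m) + m$, I fix very general defining sections $f_2,\ldots,f_s$, so that $Y := V(f_2,\ldots,f_s) \subset \CP^{N+s}$ is a smooth Fano variety of dimension $N+1$ and the very general complete intersection of type $(d_1,\ldots,d_s)$ becomes a very general hypersurface $X = V(f_1) \cap Y$ of degree $d_1$ inside $Y$. Adjunction shows $-K_X = r\cdot H|_X$, so the Fano index of $X$ relative to $Y$ coincides with the original Fano index $r$ and in particular does not depend on $s$.

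With this reformulation in place, the strategy is to run Schreieder's hypersurface argument \cite{Sch19JAMS,Sch21-torsion} \emph{relative to the ambient $Y$}. First, I would degenerate the defining section $f_1 \in H^0(Y,\mathcal{O}_Y(d_1))$ to a carefully chosen singular hypersurface $X_0 \subset Y$, analogous to the Fermat-type models Schreieder uses on $\CP^{N+1}$. After an appropriate alteration one should produce a nontrivial $m$-torsion unramified cohomology class on $X_0$; such a class obstructs an $m$-fold decomposition of the diagonal and therefore forces $m \mid \Tor(X_0)$. A specialization argument in the spirit of Chatzistamatiou--Levine \cite{CL17} then transfers this divisibility from the special fiber to the very general fiber, yielding the claimed lower bound on $\Tor(X)$.

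The main technical obstacle I anticipate lies in adapting Schreieder's unramified-cohomology construction to the complete-intersection ambient $Y$ instead of to projective space. Two points require care: (i) one needs an explicit singular degeneration of $f_1$ on $Y$ exhibiting the local structure that Schreieder exploits on $\CP^{N+1}$, which is delicate because vanishing theorems and explicit global sections are less accessible on $Y$ than on $\CP^{N+1}$; and (ii) the argument involves a two-stage specialization---first along a pencil of sections of $\mathcal{O}_Y(d_1)$ with $Y$ fixed, and then across the parameter space of the equations $f_2,\ldots,f_s$ defining $Y$---so one must check that the torsion-order lower bound is preserved through both stages. Once these points are handled, the logarithmic shape of the bound is inherited directly from Schreieder's hypersurface result.
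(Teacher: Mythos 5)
The heart of your argument is the sentence asserting that, after degenerating $f_1$ to a suitable singular hypersurface $X_0\subset Y$, ``one should produce a nontrivial $m$-torsion unramified cohomology class on $X_0$.'' This is precisely the step that is missing, and it is not a routine adaptation. Schreieder's construction in \cite{Sch19JAMS,Sch21-torsion} (and its extension in \cite{LS24}) is not a formal statement about degree-$d$ hypersurfaces in an arbitrary Fano ambient variety: it rests on completely explicit equations in projective space (Koll\'ar/Fermat-type monomials, quadric-bundle structures over projective space, the ``double cone construction''), and the logarithmic bound $d\ge \log_2(r+m)+m$ is the output of that specific inductive construction. No analogue of these explicit singular models, nor of the vanishing results for unramified cohomology that make them work, is available inside a general complete intersection $Y=V(f_2,\dots,f_s)$, and you do not supply one. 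Your own list of ``technical obstacles'' (explicit sections and vanishing theorems on $Y$, the two-stage specialization) is in fact the entire mathematical content of the theorem; as written, the proposal reduces the statement to an unproved relative version of Schreieder's theorem rather than proving it. (The reduction itself --- relabel so $d_1$ carries the bound, note by adjunction that the relevant index is still $r$, and specialize the very general complete intersection to a special member --- is fine, but it is the easy part.)

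For contrast, the paper deliberately avoids re-running the unramified-cohomology machinery in a new ambient space. It only uses the known \emph{affine} hypersurface examples in affine space (\Cref{thm:base-examples-hypersurfaces}) and then manufactures affine complete intersections of the desired multidegree from them by iterated degenerations into two components (\Cref{thm:degeneration-torsion-order}, \Cref{thm:induction-step}, \Cref{thm:add-hypers}), which transport a lower bound on the \emph{relative} torsion order from the hypersurface to the complete intersection; a Gr\"obner-basis argument controls the projective closure, and a final specialization gives \Cref{thm:ci-torsion-order}, from which \Cref{thm:intro-torsion-order-ci} follows by a short numerical check. If you want to salvage your approach, you would need to actually construct, for each admissible $(d_1,\dots,d_s)$ and $r$, a singular degree-$d_1$ hypersurface inside some member $Y$ together with a nontrivial $m$-torsion unramified class obstructing the decomposition of the diagonal --- which is an open-ended problem, not a corollary of the existing hypersurface results.
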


A complete intersection of multidegree $(d_1,\dots,d_s)$ is \emph{very general} if it is (up to a field extension) isomorphic to the geometric generic fibre of the parameter space of complete intersection of the given multidegree, see \Cref{sec:very-general}.

In fact we obtain slightly better bounds, which recover the bounds for the torsion order of hypersurfaces in \cite{LS24}.
For instance, we obtain the following result for the $2$-divisibility of the torsion order, which yields the same bound in positive characteristic as the stable irrationality bound of Nicaise--Ottem \cite{NO22} in characteristic $0$. The results in \cite{NO22} are proven using a motivic method, which builds on earlier work of Nicaise--Shinder \cite{NS19} and Kontsevich--Tschinkel \cite{KT19}. The motivic method was recently improved by Hotchkiss--Stapleton \cite{HS25} to handle $\aff^1$-connectedness.

\begin{theorem}\label{thm:intro-ci-2-torsion}
    Let $k$ be a field of characteristic different from $2$. Then the torsion order of a very general complete intersection of multidegree $(d_1,\dots,d_s)$, dimension at least $4$ and Fano index $r$ is divisible by $2$ if $r \leq 2$ or some degree $d_i \geq 4$ and
    $$
        r \leq (d_i + 1) 2^{d_i - 4} - \left\lfloor\frac{d_i + 2}{2}\right\rfloor.
    $$
    In particular, such complete intersections are neither (retract) rational nor $\aff^1$-connected.
\end{theorem}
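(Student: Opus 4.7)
The plan is to reduce Theorem~\ref{thm:intro-ci-2-torsion} to the hypersurface case of \cite{LS24} by a specialization argument, relying on the fact (recalled or developed earlier in the paper) that for a very general member $X$ of a family of complete intersections, the torsion order of a smooth birational model $\widetilde{X}_0$ of any specialization divides $\Tor(X)$. Consequently, to show $2 \mid \Tor(X)$ it suffices to exhibit, for each admissible multidegree and Fano index, one complete intersection $X_0$ with a resolution $\widetilde{X}_0$ satisfying $2 \mid \Tor(\widetilde{X}_0)$.

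Under the main hypothesis that some $d_i \geq 4$ realizes the numerical bound (say $i = 1$), I would specialize the degree-$d_1$ defining equation so that $H_1$ becomes the explicit singular hypersurface $Y_0 \subset \CP^{N+s}$ constructed in \cite{LS24}, while taking $H_2,\dots,H_s$ to be sufficiently general hypersurfaces of degrees $d_2,\dots,d_s$. A resolution $\widetilde{X}_0$ is then obtained as the intersection of the strict transforms of the $H_j$ inside the resolution $\widetilde{Y}_0 \to Y_0$ from \cite{LS24}, smooth by Bertini for general $H_j$. By \cite{LS24}, $\widetilde{Y}_0$ carries an explicit unramified cohomology class with $\Z/2$-coefficients obstructing $2$-divisibility of its torsion order, and the core step is to verify that this class restricts non-trivially to $\widetilde{X}_0$. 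The arithmetic inequality $r \leq (d_1 + 1)\, 2^{d_1 - 4} - \lfloor (d_1 + 2)/2 \rfloor$ is expected to enter precisely as the Lefschetz-type condition ensuring survival of the class under restriction to a complete intersection of codimension $s-1$ inside $\widetilde{Y}_0$. For the remaining case $r \leq 2$ (which covers multidegrees with all $d_i \leq 3$), the small Fano index regime should be accessible by either degenerating to a standard $2$-torsion obstruction (e.g.\ a double-cover construction producing a nonzero unramified Brauer class) or by importing the stable irrationality bounds of \cite{NO22} or \cite{HS25} via motivic specialization. The ``in particular'' clause is then immediate from $2 \mid \Tor(X)$, which rules out $\Tor(X) = 1$ and hence both retract rationality and $\aff^1$-connectedness.

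The principal technical obstacle I expect is the restriction-survival step: one must control precisely how the singular locus of $Y_0$ and the combinatorics of the resolution $\widetilde{Y}_0$ from \cite{LS24} interact with general codimension-$(s-1)$ hyperplane sections, and then verify injectivity of the relevant restriction map on unramified cohomology for the key class. It is here that the precise shape of the bound $(d_1 + 1)\, 2^{d_1 - 4} - \lfloor (d_1 + 2)/2 \rfloor$ should emerge as a sharp counting condition comparing the Fano index (which measures the available codimension drop) to the admissible number of Bertini-type sections preserving the obstruction.
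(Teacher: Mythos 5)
There is a genuine gap, and it is quantitative rather than cosmetic. Your plan is to specialize only the degree-$d_1$ equation to the singular hypersurface of \cite{LS24} and cut it by \emph{general} hypersurfaces of degrees $d_2,\dots,d_s$. But then the obstruction must live on a degree-$d_1$ hypersurface of dimension $D+s-1$ (where $D=\dim X$), i.e.\ of hypersurface Fano index $r_h = D+s+1-d_1 = r + d_2+\dots+d_s$. The examples of \cite{Sch19JAMS,LS24} only exist up to the hypersurface bound, so your construction could at best prove the theorem under the hypothesis $r + d_2+\dots+d_s \leq (d_1+1)2^{d_1-4}-\lfloor (d_1+2)/2\rfloor$, which is strictly weaker than the stated bound $r \leq (d_1+1)2^{d_1-4}-\lfloor (d_1+2)/2\rfloor$ (the latter is independent of $d_2,\dots,d_s$, so for fixed $r$ and $d_1$ the dimension $D$ can be far beyond the reach of any degree-$d_1$ hypersurface example). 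The paper's proof goes the opposite way: the extra degrees $d_2,\dots,d_s$ are used to \emph{raise} the dimension, via iterated affine degenerations into two components (\Cref{thm:induction-step}, \Cref{thm:add-hypers}, building on \Cref{thm:degeneration-torsion-order}), where the obstruction sits on the lower-dimensional intersection $Z$ of the two components of the special fibre and propagates up to the geometric generic fibre; the compactification is then controlled by Gröbner bases (\Cref{prop:homogenization-monomial-order}) and one concludes by a standard degeneration (\Cref{thm:ci-torsion-order}). Independently of the numerics, your core step — that the unramified class on a resolution of $Y_0$ ``survives restriction'' to a general complete intersection inside it, governed by a Lefschetz-type condition — has no supporting mechanism: unramified cohomology satisfies no Lefschetz hyperplane theorem in this range, restriction to general complete intersections is exactly the kind of operation that can kill such classes, and the paper never performs such a restriction; the shape of the bound comes from the counting in \Cref{thm:base-examples-hypersurfaces} and \Cref{prop:aff-ci}, not from any section-survival count.

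The case $r\leq 2$ is also not covered by your fallback. Importing the motivic specialization results of \cite{NO22,HS25} yields stable irrationality (and, in \cite{HS25}, obstructions to $\aff^1$-connectedness), not divisibility of the torsion order by $2$, and \cite{NO22} is a characteristic-$0$ statement, whereas the theorem is over any field of characteristic $\neq 2$. The paper instead treats $r\leq 2$ (\Cref{thm:ci-torsion-order-low-index}) by feeding explicit low-index affine examples — the Hassett--Pirutka--Tschinkel quartic rewritten as a $(2,2,2)$- and a $(3,3)$-complete intersection (\Cref{sec:HPT-quartic}) — into the same affine-degeneration machinery (\Cref{prop:aff-ci-low-index}), which is where the relative $\Z/2$-torsion order, and hence $2\mid \Tor$, actually comes from. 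Your ``in particular'' deduction from $2\mid\Tor(X)$ is fine, but the two main inputs it rests on are not established by your argument.
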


Note that the torsion order of complete intersections with non-positive Fano index $r$ might be infinite, e.g.\ non Fano complete intersection over $\C$ have infinite torsion order. The case of hypersurfaces in the above theorem is covered by \cite{Moe23}, who showed the stable irrationality in char $0$, and \cite[Theorem 1.1]{LS24}. The latter results also cover the case of quartic fivefolds, see also \cite{NO22} and \cite[Theorem 1.1]{PS23}.

\subsection{Hypersurfaces in rational varieties}

We illustrate the flexibility of our approach by considering also hypersurfaces in other rational varieties. Among them hypersurfaces in products of projective spaces are probably the most extensively studied ones with regard to the rationality problem, see for example \cite{BvB18,AbbanOkada,ABvBP20,NO22,Moe23}.

To the best of our knowledge, there seems to be no upper bound for the torsion order of hypersurfaces in products of projective spaces known in the literature. Although one might be able to adapt Rojtman's argument for complete intersection also to these examples, which we do not pursue here. Instead we provide the following lower bound.

\begin{theorem}\label{thm:intro-hyper-in-prod}
    Let $k$ be a field and let $m,s \in \Z_{\geq 1}$ be positive integers such that $m$ is invertible in $k$. Then the torsion order of a very general hypersurface in $\CP_k^{M_1} \times_k \dots \times_k \CP_k^{M_s}$ of multidegree $(d_1,d_2,\dots,d_s)$ is divisible by $m$, if $$
        M_1 \geq 4 \quad \text{and} \quad (d_1,d_2,\dots,d_s) \geq (\log_2(M_1) + m,M_2+1,M_3+1,\dots,M_s+1).
    $$
    In particular, it is neither stably rational nor retract rational nor $\aff^1$-connected.
\end{theorem}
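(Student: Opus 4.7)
The plan is a fibration argument that reduces the theorem to the Schreieder-type torsion-order bound for very general hypersurfaces in projective space (the main input of \cite{LS24} and \Cref{thm:intro-torsion-order-ci}). Let $Y_2 := \CP_k^{M_2} \times_k \dots \times_k \CP_k^{M_s}$, set $K := k(Y_2)$, and consider the projection $\pi \colon X \to Y_2$. The generic fibre $X_\eta$ is a hypersurface of degree $d_1$ in $\CP_K^{M_1}$.

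First I would show that $X_\eta$ base-changed to $\bar K$ is very general of degree $d_1$ in $\CP^{M_1}$. If $\mathcal{P}'$ denotes the parameter space of multidegree-$(d_1, \dots, d_s)$ hypersurfaces in $\CP^{M_1} \times_k Y_2$ and $\mathcal{P}$ the parameter space of degree-$d_1$ hypersurfaces in $\CP^{M_1}$, then the evaluation map $\mathcal{P}' \times_k Y_2 \to \mathcal{P}$, $(F, q) \mapsto F|_{\CP^{M_1} \times \{q\}}$, is dominant: for any $q$, the section $G(x_1) h(x_2, \dots, x_s)$ with $h(q) \neq 0$ restricts to a non-zero scalar multiple of $G$. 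Hence $X_\eta \otimes_K \bar K$ is isomorphic to the base change to $\bar K$ of the geometric generic fibre $\mathcal{H}_{\bar\eta}$ of the universal hypersurface over $\mathcal{P}$. By \cite{LS24}, since $d_1 \geq \log_2(M_1) + m$ and $M_1 \geq 4$ ensure $r(\mathcal{H}_{\bar\eta}) + m \leq M_1$, we have $m \mid \Tor(\mathcal{H}_{\bar\eta})$. Torsion order being stable under extensions between algebraically closed fields (spreading out any decomposition to a finitely generated subalgebra and specialising along a rational point of the base), we deduce $m \mid \Tor(X_\eta \otimes_K \bar K/\bar K)$; since decompositions base-change from $K$ to $\bar K$, $\Tor(X_\eta \otimes \bar K/\bar K)$ divides $\Tor(X_\eta/K)$, and $m \mid \Tor(X_\eta/K)$.

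The final step is to show $\Tor(X_\eta/K) \mid \Tor(X/k)$. Given a decomposition $e \Delta_X = z \times X + Z$ in $\CH_{\dim X}(X \times_k X)$, I would specialise along the generic point of the diagonal $\Delta_{Y_2} \subset Y_2 \times_k Y_2$ under $\pi \times \pi$. Then $[\Delta_X]$ specialises to $[\Delta_{X_\eta}]$; the class $z \times X$ specialises to $0$, since its support $\bigsqcup_i \{p_i\} \times X$ lies over the 0-cycle $\pi_\ast(z) \times Y_2 \subset Y_2 \times_k Y_2$ which misses the generic point of $\Delta_{Y_2}$; and $Z$ specialises to a class $Z_\eta$ whose support still does not dominate the second factor, since $\pi_2(\operatorname{supp}(Z)) \subsetneq X$ meets the generic fibre $X_\eta$ in a proper closed subscheme. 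This yields $e \Delta_{X_\eta} = Z_\eta$ in $\CH_{\dim X_\eta}(X_\eta \times_K X_\eta)$, a decomposition of $\Delta_{X_\eta}$ with zero-cycle $z_\eta = 0$ and multiplier $e$, so $\Tor(X_\eta/K) \mid e$ and the theorem follows.

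I expect the main obstacle to be making the cycle specialisation above rigorous, in particular the verification that $Z_\eta$ still fails to dominate the second factor and the transfer of the ``very general'' property from $X$ to $X_\eta$ despite $K$ not being algebraically closed. The conditions $d_i \geq M_i + 1$ for $i \geq 2$ seem to be used to exclude trivially rational cases (e.g.\ $d_i = 1$ makes $X$ a $\CP^{M_i-1}$-bundle over $\prod_{j\neq i} \CP^{M_j}$ and hence rational), so that the divisibility conclusion is non-trivial.
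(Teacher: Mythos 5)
Your reduction stands or falls with the final step, the claim that $\Tor(X_\eta/K)$ divides $\Tor(X/k)$ for the generic fibre of $\pi\colon X\to Y_2$, and this step is a genuine gap. The cycle operation you sketch does not exist: if you pass to the fibre of $X\times_k X\to Y_2\times_k Y_2$ over the generic point of $Y_2\times_k Y_2$ (the only place where localization/flat pullback is available), the class $[\Delta_X]$ dies, since $\Delta_X$ lies over the proper closed subset $\Delta_{Y_2}$; if instead you use the refined Gysin map along the regular embedding $\Delta_{Y_2}\subset Y_2\times_k Y_2$, then $\Delta_X$ is entirely contained in the centre, and by the excess intersection formula its image is the top Chern class of an excess normal bundle (a copy of the pullback of $T_{Y_2}$) capped with $[\Delta_X]$, not $[\Delta_{X_\eta}]$. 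More decisively, the statement you need is simply false. Take $Y_2=\CP^2$ and let $X$ be a smooth projective model of the conic bundle $\{z_0^2=xz_1^2+yz_2^2\}\to \CP^2$ (affine coordinates $x,y$). The total space is rational (solve for $y$ in terms of the other coordinates), so $\Tor(X)=1$; but the generic fibre is the conic attached to the nontrivial quaternion algebra $(x,y)$ over $K=k(\CP^2)$, which has no $K$-point and hence torsion order $2$, which does not divide $1$. A decomposition of the diagonal of the total space of a fibration gives no control over zero-cycles on its generic fibre, so no fix of the specialization argument can work; the first half of your argument (identifying $X_\eta\otimes\bar K$ with a very general hypersurface in $\CP^{M_1}$) is therefore moot, although it too needs care when $d_1>M_1+1$, where the quoted hypersurface bound requires positive Fano index.

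The paper's proof takes an entirely different route: it never fibres the very general $X$, but instead constructs an explicit (singular) hypersurface of the given multidegree in $\CP^{M_1}\times\dots\times\CP^{M_s}$ whose $\Z/m$-torsion order relative to a suitable closed subset equals $m$ (\Cref{prop:aff-hypers-prod}, obtained by applying the affine degeneration machinery of \Cref{thm:add-hypers} to the Schreieder-type affine hypersurfaces of \Cref{thm:base-examples-hypersurfaces}; the hypotheses $d_i\geq M_i+1$ for $i\geq 2$ are used quantitatively there as the budget of added degrees and variables, not merely to exclude trivially rational cases), then degenerates the very general hypersurface to this special one and applies \Cref{lem:torsion-order-degeneration}, converting the relative torsion order into divisibility of the actual torsion order via $H^{1,0}=0$ and \Cref{lem:torsion-order-and-CH0}.
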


This extends and generalizes earlier results by Nicaise--Ottem \cite[Proposition 6.2]{NO22}, where stable irrationality for $s = 2$ is shown in characteristic $0$.

A different class of examples are hypersurfaces in Grassmannians, which are related to Gushel-Mukai varieties, see e.g.\ \cite{DK18}.
We obtain the following result by relating certain hypersurfaces in Grassmannians directly with the ones in \cite{Sch19JAMS,LS24}.

\begin{theorem}\label{thm:intro-2-torsion-Grass}
    Let $l,n \in \Z_{\geq 1}$ be positive integers such that $l (n-l) \geq 4$. Consider the complex Grassmannian $\Grass(l,n)$ and fix a Plücker embedding $\Grass(l,n) \hookrightarrow \CP_{\C}^N$. Then the torsion order of the intersection of $\Grass(l,n)$ with a very general complex hypersurface of degree $d \geq 4$ in $\CP_{\C}^N$ is divisible by $2$, if $l(n-l) \leq (d+1) 2^{d-4}$. 
\end{theorem}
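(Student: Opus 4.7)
The plan is to reduce the claim to the hypersurface case of \cite[Theorem 1.1]{LS24} by combining the pull-push principle for the torsion order (recalled in the introduction) with the specialization method developed in \cite{Sch21-torsion}.

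First, I would construct a dominant rational map $\pi \colon \Grass(l,n) \dashrightarrow \CP^{l(n-l)}$ of \emph{odd} degree, realised as the restriction of a linear projection $\CP_\C^N \dashrightarrow \CP_\C^{l(n-l)}$ from a linear subspace $\Lambda \subset \CP^N$ of codimension $l(n-l)+1$. For generic $\Lambda$ not meeting $\Grass(l,n)$, the restriction $\pi|_{\Grass}$ has degree equal to the Plücker degree $\deg_{\mathrm{Pl}} \Grass(l,n)$; by forcing $\Lambda$ to pass through $m$ general points of $\Grass(l,n)$, which imposes $m(l(n-l)+1)$ transverse conditions and is therefore feasible for any $m \le N - l(n-l)$, the degree drops to $\deg_{\mathrm{Pl}} \Grass(l,n) - m$, and a suitable choice of $m$ of the correct parity makes this odd. (In the trivial case $l \in \{1, n-1\}$ one has $\Grass(l,n) \cong \CP^{l(n-l)}$ and the statement reduces directly to \cite[Theorem 1.1]{LS24}, so we may assume we are in the nontrivial range.)

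Next, I would consider the subfamily of degree-$d$ hypersurfaces in $\CP^N$ given by $H_0 = \pi^{-1}(Y)$ where $Y \subset \CP^{l(n-l)}$ is a very general degree-$d$ hypersurface; each such $H_0$ is the cone over $Y$ with apex $\Lambda$ and is therefore still of degree $d$. The intersection $X_0 := \Grass(l,n) \cap H_0 = (\pi|_{\Grass})^{-1}(Y)$ admits a dominant generically finite rational map $X_0 \dashrightarrow Y$ of degree $\deg \pi|_{\Grass}$, which is odd by construction. Passing to a smooth projective resolution $\widetilde{X_0} \to X_0$ (obtained for instance by blowing up the intersection $\Lambda \cap \Grass(l,n)$), one gets a generically finite morphism $\widetilde{X_0} \to Y$ of odd degree, and the pull-push principle yields $\Tor(Y) \mid \deg(\pi|_{\Grass}) \cdot \Tor(\widetilde{X_0})$; since the degree is odd, any factor of $2$ in $\Tor(Y)$ must also appear in $\Tor(\widetilde{X_0})$.

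By \cite[Theorem 1.1]{LS24} applied to the very general $Y$, the torsion order $\Tor(Y)$ is divisible by $2$ under the hypothesis $d \geq 4$ together with the Fano-index bound, which is implied (up to the lower-order correction $\lfloor(d+2)/2\rfloor$) by $l(n-l) \leq (d+1)2^{d-4}$; the borderline case $l(n-l) = 4$ (giving $\dim Y = 3$) is handled by dedicated results on quartic threefolds and low-degree complete intersections \cite{NO22,Moe23}. Hence $2 \mid \Tor(\widetilde{X_0})$, and the specialization method \cite{Sch21-torsion} applied to the family $\{\Grass(l,n) \cap H_d\}_{[H_d]}$ transfers this $2$-divisibility from $\widetilde{X_0}$ to the very general fiber $X = \Grass(l,n) \cap H_d$, completing the argument. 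The main obstacle will be carrying out the first step: one has to justify that $\Lambda$ with the required parity-controlled degree exists via a careful dimension count and a Bertini-type genericity argument, and to verify that the resulting $X_0$ admits a universally $\CH_0$-trivial resolution so that the specialization theorem applies without extra hypotheses.
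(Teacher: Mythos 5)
Your route is genuinely different from the paper's (which transports explicit affine hypersurface examples into an affine chart $V\cong\aff^{l(n-l)}$ of $\Grass(l,n)$ via a linear projection and then runs the relative-torsion-order degeneration machinery of \cite{LS24}), and in the sub-case where the Pl\"ucker degree of $\Grass(l,n)$ is odd your argument is essentially sound: with $m=0$ the center $\Lambda$ is disjoint from the Grassmannian, the projection is a finite morphism, $X_0=(\pi|_{\Grass})^{-1}(Y)$ is smooth and irreducible by Bertini and connectedness, the pull--push inequality $\Tor(Y)\mid \deg(\pi|_{\Grass})\cdot\Tor(X_0)$ applies directly, and specialization of a decomposition of the diagonal from the very general fibre to the integral proper special fibre $X_0$ gives $\Tor(X_0)\mid\Tor(X_d)$, hence $2\mid\Tor(X_d)$.

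The genuine gap is the case you need for the general statement, namely $\deg_{\mathrm{Pl}}\Grass(l,n)$ even (e.g.\ $\Grass(2,4)$), where you force $\Lambda$ through $m\geq 1$ points of the Grassmannian. Then $H_0$ has multiplicity $d$ along $\Lambda$, so $X_0$ acquires points of multiplicity $d$ whose projectivized tangent cones are degree-$d$ hypersurfaces in $\CP^{l(n-l)-1}$; your pull--push only yields $2\mid\Tor(\widetilde{X_0})$ for a resolution, whereas specialization from the very general fibre controls $\Tor(X_0)$ of the singular model, and torsion orders can strictly drop under contraction, so $2\mid\Tor(\widetilde{X_0})$ does not imply $2\mid\Tor(X_0)$. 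Bridging this requires exactly the hypothesis you defer to the end — a universally $\CH_0$-trivial (or at least $2$-adically controlled) resolution — and there is no reason for it to hold: already for $\Grass(2,4)$, $d=4$, the exceptional divisor of the blow-up at such a point is a quartic surface in $\CP^3$, i.e.\ a K3, which is not universally $\CH_0$-trivial, and no alternative resolution with $\CH_0$-trivial fibres is in sight. Nor is the ``specialization method'' of \cite{Sch21-torsion} a black box that transfers torsion-order divisibility from a resolution of a singular special fibre; it needs the obstruction to be compatible with the exceptional locus (vanishing of unramified classes there, or a relative decomposition statement), which is precisely the control the paper builds in by working with $\Tor^{\Z/2}(X,W)$ relative to a closed subset and Lemma~\ref{lem:torsion-order-degeneration}/Theorem~\ref{thm:degeneration-torsion-order}, avoiding resolutions altogether. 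As written, your proof therefore does not cover even-degree Grassmannians, which includes most cases of the theorem. (A minor additional point: for the borderline $\dim Y=3$ case you should cite results giving $2\mid\Tor$ of very general quartic threefolds, e.g.\ \cite{CTP16,Sch21-torsion}, rather than the stable-irrationality statements of \cite{NO22,Moe23}, which are weaker than torsion-order divisibility.)
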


The restriction to the complex numbers is mostly for simplicity of the argument and it should be possible to obtain a similar result also in positive characteristic, see \Cref{rem:hypers-in-Grass}.

The stable rationality for hypersurfaces in the Grassmannian $\Grass(2,n)$ is studied in an unpublished work of Ottem\footnote{available at his webpage: \href{https://www.mn.uio.no/math/personer/vit/johnco/papers/gr25.pdf}{https://www.mn.uio.no/math/personer/vit/johnco/papers/gr25.pdf}.} and also in work of Yoshino \cite{Yos24,Yos25}.
For $l = 2$, Yoshino obtained a slightly better bound ($2(n-2) \leq (d+1)2^{d-4} + 2$) for the stable irrationality of hypersurfaces in $\Grass(2,n)$, see \cite[Theorem 1.3]{Yos25}.

\subsection{Outline of the argument}

Nicaise--Ottem \cite{NO22} proves successfully the (stable) irrationality of algebraic varieties using a degeneration to snc schemes, where the obstruction to rationality lies in some lower stratum using the motivic volume.
A cycle-theoretic analogue of this method was given first by Pavic--Schreieder \cite{PS23} for proper families with a generalization to non-proper families by Lange--Schreieder \cite{LS24}.

We elaborate the idea of \cite{LS24} to consider non-proper degenerations to snc schemes by studying affine degenerations to which the method of \cite{PS23,LS24} is applicable. 
In fact, we reformulate most of the assumptions in terms of basic commutative algebra statements.
An iterated application of this affine degeneration allows us to construct affine complete intersections from affine hypersurfaces, where the torsion order of the former is controlled by the torsion order of the latter.

Applying this machinery to the concrete hypersurface examples from \cite{Sch19JAMS,Sch21-torsion,LS24} provides affine complete intersections with a (non-trivial) lower bound on the torsion order. 
We emphasize that this ultimately relies on certain vanishing results for unramified cohomology classes found for instance in \cite{Sch19Duke,Sch19JAMS}.

The main results are obtained by considering different compactifications of these affine examples together with a standard degeneration argument.
To control the compactification, we use the theory of Gröbner basis.

We would like to point out that we use degenerations similar to and inspired by the ones in \cite{NO22,Moe23}. 
While they use one degeneration into many components, which they control via tropical geometry, we consider several simple degenerations into $2$ components.

In \Cref{sec:preliminaries}, we recall the theory of Gröbner basis, a relative version of the torsion order used in \cite{LS24}, and the notion of very general in families. 
The framework of affine degenerations is developed in \Cref{sec:affine-degenerations}. 
In \Cref{sec:base-examples}, we check that the hypersurfaces from \cite{Sch19JAMS,Sch21-torsion,LS24} as well as special complete intersections related to an example of Hassett--Pirutka--Tschinkel \cite{HPT18} fit in our framework of affine degeneration. 
This essentially boils down to some elementary computations using the explicit equations.
The final section (\Cref{sec:applications}) is devoted to the proof of the main results.

\section*{Acknowledgement}

The project started while GZ was visiting the Leibniz University Hannover with the support of the China Scholarship Council program (project ID:202406100211), and he is grateful to Stefan Schreieder for his valuable guidance during this period.
JL is supported by the Studienstiftung des deutschen Volkes.
We thank Finn Bartsch, V\'ictor Gonz\'alez Alonso, Robin Lahni, Ludvig Modin, Johannes Oertel, Simon Pietig, Stefan Schreieder, Isabel Vogt, and Lin Zhou for comments and discussions.

This project received funding from the European Research Council (ERC) under the European Union’s Horizon 2020 research and innovation programme under grant agreement No.~948066 (ERC-StG RationAlgic).

\section{Preliminaries}\label{sec:preliminaries}

\subsection{Notations and conventions}

All rings are assumed to be commutative with $1$. 
The characteristic $\charac(\Lambda)$ of a ring $\Lambda$ is the smallest positive integer $c \in \Z_{\geq 1}$ such that any element in $\Lambda$ is $c$-torsion and zero if no such positive integer exists.
The exponential characteristic of a field $k$ is $1$ if $\charac k = 0$ and $\charac k$ otherwise.
Unless stated otherwise, the generators of a polynomial ring are assumed to have degree $1$.

Let $R$ be a ring. By an $R$-scheme we mean a separated scheme of finite type over $R$. 
If $R = k$ is a field, then we also say \emph{algebraic scheme} over $k$.
An integral algebraic scheme over $k$ is called a \emph{$k$-variety} (or short: \emph{variety}).
If $X$ is an $R$-scheme and $A$ is an $R$-algebra, then $X \times_R A$ (or simply $X_A$) denotes the fibre product $X \times_{\Spec R} \Spec A$.

For an algebraic scheme $X$ over a field $k$, we denote its Chow group of dimension $i$ cycles by $\CH_i(X)$. We denote the tensor product $\CH_i(X) \otimes_\Z \Lambda$ for a ring $\Lambda$ by $\CH_i(X,\Lambda)$.

\subsection{Monomial orderings and Gröbner basis}

We recall a few basic notions related to monomial ordering of polynomial rings following the reference \cite[Section 2.2]{CLO15}.

Let $k$ be a field and let $R = k[x_1,\dots,x_n]$ be the polynomial ring over $k$ in $n$ variables. A \emph{monomial ordering} on $R$ is a well-ordering, i.e.~a total ordering such that any non-empty subset admits a smallest element, on the set of monomials $x^\alpha = x_1^{\alpha_1} \cdots x_n^{\alpha_n}$ where $\alpha = (\alpha_1,\dots,\alpha_n) \in \Z_{\geq 0}^n$ such that for every $\alpha,\beta,\gamma \in \Z_{\geq 0}^n$ $$
    x^\alpha > x^\beta \Longrightarrow x^{\alpha + \gamma} > x^{\beta + \gamma},
$$
see \cite[Section 2.2 Definition 1]{CLO15}.
A monomial ordering $>$ is called \emph{graded} if $\vert \alpha \vert > \vert \beta \vert$ implies $x^\alpha > x^\beta$ for every $\alpha, \beta \in \Z^n_{\geq 0}$, where $\vert \alpha \vert := \alpha_1 + \dots + \alpha_n$. The most important example for this paper is the graded lexicographical ordering.

\begin{example}[{\cite[Section 2.2 Definition 5]{CLO15}}]\label{example:grlex-ordering}
    Consider the relation $>_{\text{grlex}}$ on the set of monomials $x^\alpha$, $\alpha \in \Z_{\geq 0}^n$, on $R = k[x_1,\dots,x_n]$ given by $x^\alpha >_{\text{grlex}} x^\beta$ if $\vert \alpha \vert > \vert \beta \vert$ or $\vert \alpha \vert = \vert \beta \vert$ and the left-most non-zero entry in $\alpha-\beta \in \Z^n$ is positive. Then $>_{\text{grlex}}$ is a graded monomial ordering on $R$, called the \emph{graded lexicographic ordering}.
\end{example}

Let $R = k[x_1,\dots,x_n]$ be a polynomial ring over a field $k$ and fix a monomial ordering on $R$. For a polynomial $f = \sum_\alpha a_\alpha x^\alpha \in R$ we define the \emph{leading monomial} of $f$ as
\begin{equation}\label{eq:def-leadingmonomial}
    \LM(f) := \max \{x^\alpha : a_\alpha \neq 0\},
\end{equation}
where the maximum is taken with respect to the choosen monomial ordering. We say that the leading monomials of a collection $f_1,\dots,f_r$ of polynomials in $R$ are \emph{relatively prime} if for all $i,j \in \{1,\dots,r\}$ with $i \neq j$ we have
$$
    \gcd(\LM(f_i),\LM(f_j)) = 1.
$$

\begin{example}
    Consider the polynomials
    $$
        f = 3x^2y^3 + 6x^3y^2 - 5 xy + 5, \quad g = x^5 + x^2y^2 + y^3 + xy - 1, \quad h = y^2 + y + 1 \in k[x,y].
    $$
    The leading monomials with respect to the graded lexicographic ordering ($x > y$) are
    $$
        \LM(f) = x^3 y^2, \quad \LM(g) = x^5, \quad \LM(h) = y^2.
    $$
    In particular, we see that the leading monomials of $g$ and $h$ are relatively prime, while the leading monomials of $f$ and $g$ are not. 
\end{example}

\begin{definition}\label{def:homogenization}
Let $f\in k[x_1,...,x_n]$ be a polynomial of degree $d$. Then $$
    f^h(x_0,\dots,x_n) := x_0^d f\left(\frac{x_1}{x_0},...,\frac{x_n}{x_0}\right)
$$
is a homogeneous polynomial of degree $d$ in $k[x_0,...,x_n]$, called the \emph{homogenization} of $f$.

Let $I\subset k[x_1,...,x_n]$ be an ideal. Then we define the \emph{homogenization} of I to be the ideal 
$$
I^h= \left( f^h : f\in I \right) \subset k[x_0,x_1,\dots,x_n],
$$
which is a homogeneous ideal in $k[x_0,x_1,...,x_n]$.
\end{definition}

A collection of polynomials whose leading monomials are relatively prime in the above sense behave well under homogenization.

\begin{proposition}\label{prop:homogenization-monomial-order}
    Let $R = k[x_1,\dots,x_n]$ be a polynomial ring over a field $k$ and fix a graded monomial ordering on $R$.
    If the leading monomials of a collection of polynomials $f_1,\dots,f_r \in R$ of degree at least $1$ are relatively prime, then
    $
        I^h = (f_1^h,\dots,f_r^h).
    $
    In particular, the scheme $\{f_1^h = \dots = f_r^h = 0\} \subset \CP^n_k$ is the projective closure of $\{f_1 = \dots = f_r = 0\} \subset \aff^n_k$, if $k$ is algebraically closed.
\end{proposition}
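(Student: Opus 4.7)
The plan is to reduce this proposition to two classical facts from Gröbner basis theory. First, I would establish that the collection $\{f_1,\dots,f_r\}$ is a Gröbner basis of $I$ with respect to the chosen graded monomial ordering. This is a direct application of Buchberger's product criterion (see e.g.\ \cite[Chapter 2]{CLO15}): whenever two polynomials $g,h$ satisfy $\gcd(\LM(g),\LM(h))=1$, their $S$-polynomial $S(g,h)$ reduces to zero under the division algorithm modulo $\{g,h\}$. Since by hypothesis every pair $\LM(f_i),\LM(f_j)$ with $i\neq j$ is coprime, Buchberger's criterion (all $S$-polynomials reduce to zero) yields the Gröbner basis property.

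Next, I would extend the given graded monomial ordering on $R$ to a graded monomial ordering on $k[x_0,x_1,\dots,x_n]$ by comparing total degrees first and then breaking ties via the original ordering, with $x_0$ declared smallest among the variables. I would then invoke the classical theorem (see e.g.\ \cite[Chapter 8]{CLO15}) that, with respect to such a graded order, the homogenizations of a Gröbner basis of an ideal $I \subset R$ form a Gröbner basis of $I^h$. Applied to the Gröbner basis obtained in the previous step, this immediately gives $I^h=(f_1^h,\dots,f_r^h)$, which is the main statement of the proposition.

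For the "in particular" clause, the subscheme $\{f_1^h=\dots=f_r^h=0\}\subset\CP^n_k$ agrees with $V(I^h)$ by the first part, and over an algebraically closed field $V(I^h)$ is precisely the projective closure of the affine variety $V(I)\subset\aff^n_k$; this is standard (again, \cite[Chapter 8]{CLO15}). The main obstacle here is essentially nonexistent: the proof is a clean packaging of Buchberger's product criterion and the Gröbner-basis/homogenization principle. The only point requiring care is setting up the extended monomial ordering on $k[x_0,\dots,x_n]$ correctly so that the homogenization theorem applies, but this is routine bookkeeping.
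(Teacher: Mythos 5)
Your proposal is correct and follows essentially the same route as the paper: coprime leading monomials give a Gröbner basis via Buchberger's product criterion, the graded-order homogenization theorem then yields $I^h=(f_1^h,\dots,f_r^h)$, and the projective-closure statement is the standard fact from \cite[Section 8.4]{CLO15}. No gaps.
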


\begin{proof}
    This is a consequence of the theory of Gröbner basis. The assumption on the relative primeness of the leading monomials implies that $f_1,\dots,f_r$ is a Gröbner basis for $I$ by \cite[Section 2.9, Proposition 4 and Theorem 3]{CLO15}. The homogenization of a Gröbner basis of $I$ is a basis of $I^h$, see e.g. \cite[Section 8.4, Theorem 4]{CLO15}.
    The claim about the projective closure is shown for example in \cite[Section 8.4, Theorem 8]{CLO15}.
\end{proof}

\subsection{Strictly semi-stable schemes}

We recall the well-known definition.

\begin{definition}\label{def:strictly-semi-stable}
    Let $R$ be a discrete valuation ring with residue field $k$ and fraction field $K$. A \emph{strictly semi-stable $R$-scheme} is an integral separated scheme $\mathcal{X}$ flat and of finite type over $R$ such that
\begin{itemize}
    \item the generic fibre $\mathcal{X}_K$ is smooth over $K$;
    \item the special fibre $Y := \mathcal{X}_k$ is geometrically reduced;
    \item each irreducible component $Y_i$ of $Y$ ($i \in I$) is a Cartier divisor in $\mathcal{X}$;
    \item for every $\emptyset \neq J \subseteq I$, the scheme-theoretic intersection $Y_J := \bigcap_{j \in J} Y_j$ is smooth and equidimensional of dimension $\dim Y + 1 - |J|$, if non-empty.
\end{itemize}
\end{definition}

\subsection{Relative torsion orders}\label{sec:relative-torsion-order}

Let $X$ be a variety over a field $k$. Let $\Delta_X \subset X \times_k X$ denote the diagonal class in the Chow group, and $\delta_X\in \CH_0(X_{k(X)})$ be the class induced by pulling back $\Delta_X$ via the natural morphism $X_{k(X)}\to X\times_k X$.

\begin{definition}[{\cite[Definition 3.3]{LS24}}]\label{def:relative-torsion-order}
    Let $\Lambda$ be a ring and let $X$ be a variety over a field $k$. Let $W \subset X$ be a closed subset and denote its complement by $U := X \setminus W$. 
    The \emph{$\Lambda$-torsion order of $X$ relative to $W$}, denoted by $\Tor^\Lambda(X,W)$, is the order of the element 
    $$
        \restr{\delta_X}{U} = \delta_U \in \CH_0(U_{k(X)},\Lambda).
    $$
\end{definition}

Note that $\Tor^\Lambda(X,W) \in \Z_{\geq 1} \cup \{\infty\}$ and it divides the characteristic of $\Lambda$ if $\charac(\Lambda) > 0$.
We recall a few basic facts about the relative torsion order, starting with the following simple lemma.

\begin{lemma}[{\cite[Lemma 3.6]{LS24}}]\label{lem:properties-of-torsion-order}
    Let X be a variety over a field $k$ and let $W\subset X$ be a closed subscheme. Then the following hold:
    \begin{enumerate}[label=(\arabic*)]
        \item For all $m\in \mathbb{Z}$, $\Tor^{\mathbb{Z}/m}(X,W)\mid \Tor^{\Z}(X,W)$;
        \item Let $W^{\prime} \subset W \subset X$, then $\Tor^{\Lambda}(X,W)\mid \Tor^{\Lambda}(X,W^{\prime})$; \label{item:lem_properties-tor-order:bigger-subset}
        \item $\Tor(X)$ is the minimum of the relative torsion orders $\Tor^{\mathbb{Z}}(X,W)$ where $W\subset X$ runs through all closed subsets of dimension zero;
        \item If $X$ is proper and $\deg \colon \CH_0(X)\cong \mathbb{Z}$ is an isomorphism, then $\Tor(X)=\Tor^\mathbb{Z}(X,W)$ for any closed subset $W\subset X$ of dimension $0$ containing a zero-cycle of degree $1$;
        \item $\Tor^{\Lambda}(X_L,W_L)\mid \Tor^{\Lambda}(X,W)$ for any ring $\Lambda$ and for any field extension $L/k$. Moreover, if $k=\bar{k}$ is algebraically closed, then $\Tor^{\Lambda}(X_L,W_L)= \Tor^{\Lambda}(X,W)$. \label{item:lem_properties-tor-order:field-extension}
    \end{enumerate}
\end{lemma}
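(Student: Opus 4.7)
The plan is to deduce all five items directly from the definition of $\Tor^\Lambda(X,W)$ as the order of $\delta_U\in\CH_0(U_{k(X)},\Lambda)$, combining the functoriality of Chow groups with coefficients under change of ring, the localization sequence for the open decomposition $X=U\sqcup W$, and the standard Bloch--Srinivas-type translation between decompositions of the diagonal and identities involving $\delta_X$ at the generic point of the second projection $\pi_2\colon X\times_k X\to X$.

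Items (1) and (2) are formal. For (1), the coefficient change map $\CH_0(U_{k(X)},\Z)\to\CH_0(U_{k(X)},\Z/m)$ sends $\delta_U$ to its reduction mod $m$, so any integer annihilating $\delta_U$ integrally also annihilates it modulo $m$; hence the order in $\Z/m$-coefficients divides the $\Z$-order. For (2), the open inclusion $U=X\setminus W\hookrightarrow U'=X\setminus W'$ (for $W'\subset W$) induces a surjective restriction $\CH_0(U'_{k(X)},\Lambda)\twoheadrightarrow\CH_0(U_{k(X)},\Lambda)$ sending $\delta_{U'}\mapsto\delta_U$, so the order of $\delta_U$ divides that of $\delta_{U'}$.

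For (3) and (4) the key input is the following translation: pullback along the flat morphism $X_{k(X)}\to X\times_k X$ (localization at the generic point of $\pi_2$) sends the diagonal $\Delta_X$ to $\delta_X$ and the cycle $z\times X$ to $z_{k(X)}$, while killing every cycle not dominating the second factor. Since this pullback is surjective with kernel the classes supported on $X\times V$ for some proper closed $V\subsetneq X$, an identity $e\Delta_X=z\times X+Z$ as in \eqref{eq:torsion-order-intro} is equivalent to $e\delta_X=z_{k(X)}\in\CH_0(X_{k(X)})$. Restricting further to $U=X\setminus\mathrm{supp}(z)$ kills $z_{k(X)}$, so such a decomposition with $\dim\mathrm{supp}(z)=0$ exists if and only if $e\delta_U=0$ for the corresponding $0$-dimensional $W=\mathrm{supp}(z)$; this gives (3) after minimizing over $W$. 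For (4), the hypothesis $\deg\colon\CH_0(X)\cong\Z$ together with $W$ containing a degree-$1$ zero-cycle $z_0$ implies every $z\in\CH_0(X)$ is rationally equivalent to $\deg(z)\cdot z_0$, hence can be taken supported on $W$; combined with (2) and (3) this forces $\Tor^\Z(X,W)=\Tor(X)$.

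For (5), the divisibility is immediate: the flat base change $\CH_0(U_{k(X)},\Lambda)\to\CH_0(U_{L(X_L)},\Lambda)$ sends $\delta_U$ to $\delta_{U_L}$, so any $e$ annihilating the former also annihilates the latter. The equality when $k=\bar k$ is the main obstacle and will follow by a standard spreading-out and specialization argument: a relation $e\cdot\delta_{U_L}=0$ in $\CH_0(U_{L(X_L)},\Lambda)$ involves only finitely many rational functions, hence descends to a relation over $X\times_k T$ for some integral $k$-variety $T=\Spec A$ with $A\subset L$ a finitely generated $k$-subalgebra; after shrinking $T$ to a smooth dense open, the existence of a $k$-rational point $t\in T(k)$ (guaranteed by $k=\bar k$) allows us to specialize the relation at $t$, recovering $e\delta_U=0$ over $k$.
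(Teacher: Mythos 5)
The paper itself offers no argument for this lemma --- it is quoted verbatim from \cite[Lemma 3.6]{LS24} --- so your proposal has to stand on its own. Items (1) and (2) are correct and are exactly the formal coefficient-change and open-restriction arguments one expects. The divisibility in (5) via pullback along $L(X_L)/k(X)$ is fine, and your spreading-out-plus-specialization sketch for the equality when $k=\bar k$ is the standard route (to make it airtight you should specialize via the specialization homomorphism over a discrete valuation ring, e.g.\ after cutting $T$ by a curve through the chosen $k$-point --- the same mechanism as \Cref{lem:torsion-order-degeneration} --- and check that it sends the spread-out class to $\delta_U$), but this is the intended argument.

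The genuine gap is in (3), and it propagates into (4). You assert that a decomposition $e\Delta_X=z\times X+Z$ with $\dim\operatorname{supp}(z)=0$ exists \emph{if and only if} $e\delta_U=0$ for $W=\operatorname{supp}(z)$, but you only justify the ``only if'' direction. The converse is the direction actually needed for $\Tor(X)\le\min_W\Tor^{\Z}(X,W)$ (and for $\Tor(X)\mid\Tor^{\Z}(X,W)$ in (4)): from $e\delta_U=0$ the localization sequence only gives $e\delta_X=i_*\alpha$ for some $\alpha\in\CH_0(W_{k(X)})$, and to invoke your Bloch--Srinivas translation you must further show that $i_*\alpha=z_{k(X)}$ for a zero-cycle $z$ on $X$ supported on $W$. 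This is not formal: the closed points of $W_{k(X)}=W\times_k\Spec k(X)$ need not be pullbacks of points of $W$ (the fibre over a closed point $w$ can split, or carry multiplicities, when $\kappa(w)\otimes_k k(X)$ is not a field), so an arbitrary class of $\CH_0(W_{k(X)})$ is not of the form $z_{k(X)}$; spreading such an $\alpha$ out only yields a cycle supported on $W\times_k X$, not one of the shape $z\times X$ demanded by \eqref{eq:torsion-order-intro}. The step is automatic precisely when every point of $W$ is $k$-rational (e.g.\ $k$ algebraically closed, the only case the present paper uses) or when $X$ is geometrically integral, so that $\kappa(w)\otimes_k k(X)$ is a field for each $w\in W$; but your proposal never addresses it, and this is where the real content of (3) lies. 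Consequently the halves of (3) and (4) that rest on this equivalence are unproved as written (the replacement $z\mapsto\deg(z)\cdot z_0$ in (4), giving $\Tor^{\Z}(X,W)\mid\Tor(X)$, is fine).
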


\begin{lemma}\label{lem:torsion-order-and-CH0}
    Let $X$ be a smooth projective variety over an algebraically closed field $k$. If $H^{0}(X,\Omega_X^1) = 0$, then the torsion order $\Tor(X) = \infty$ or $\CH_0(X) \cong \Z$.
\end{lemma}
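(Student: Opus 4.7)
The plan is to exploit the decomposition of the diagonal afforded by finite torsion order and combine it with Roitman's theorem to pin down $\CH_0(X)$.

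Assume $\Tor(X) = e < \infty$. Then there is a decomposition $e \cdot \Delta_X = z \times X + Z$ in $\CH_{\dim X}(X \times_k X)$ with $Z$ not dominating the second factor. Restricting this identity to the generic fibre of the second projection, and using that $Z$ vanishes there, yields
$$
    e \cdot \delta_X = e \cdot z_{k(X)} \in \CH_0(X_{k(X)}).
$$
Specializing along any closed point $x \in X$ then gives $e \cdot [x] = e \cdot z$ in $\CH_0(X)$. Since $k$ is algebraically closed, every closed point is $k$-rational; fixing a basepoint $x_0 \in X(k)$, the group $\CH_0(X)^0 := \ker\!\left(\deg \colon \CH_0(X) \to \Z\right)$ is generated by the differences $[x] - [x_0]$, each of which is $e$-torsion by the above. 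Hence the whole group $\CH_0(X)^0$ is annihilated by $e$.

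The second ingredient is Roitman's theorem (extended by Bloch and Milne in positive characteristic to handle the $p$-primary part): the Albanese morphism induces an isomorphism on torsion subgroups,
$$
    \CH_0(X)^0_{\mathrm{tors}} \xrightarrow{\;\sim\;} \mathrm{Alb}(X)(k)_{\mathrm{tors}}.
$$
Since $\CH_0(X)^0$ is already torsion by the previous step, this yields an injection $\CH_0(X)^0 \hookrightarrow \mathrm{Alb}(X)(k)$. The hypothesis $H^0(X, \Omega_X^1) = 0$ then forces $\mathrm{Alb}(X) = 0$: the Albanese map $X \to A := \mathrm{Alb}(X)$ has image generating $A$ as a group, so the pullback of translation-invariant $1$-forms yields an injection $H^0(A, \Omega_A^1) \hookrightarrow H^0(X, \Omega_X^1) = 0$, whence $\dim A = h^0(A, \Omega_A^1) = 0$. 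Combining, $\CH_0(X)^0 = 0$ and $\CH_0(X) \cong \Z$ via the degree map.

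The delicate point is the appeal to Roitman's theorem in positive characteristic, where Milne's refinement is needed to capture $p$-primary torsion; the Albanese vanishing step is standard once one knows the inequality $\dim \mathrm{Alb}(X) \leq h^0(X, \Omega_X^1)$ in arbitrary characteristic. Everything else reduces to restriction and specialization in Chow groups, which is exactly the formalism already used to define $\delta_X$ in \Cref{sec:relative-torsion-order}.
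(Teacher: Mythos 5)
Your proposal is correct and follows essentially the same route as the paper: finite torsion order forces $\CH_0(X)_0$ to be $e$-torsion (the paper phrases this as an ``action of correspondences'' argument, you via restriction to the generic fibre and specialization), then Rojtman's theorem (with Bloch/Milne for the $p$-part) injects this torsion group into $\operatorname{Alb}(X)(k)$, which vanishes since $\dim \operatorname{Alb}(X) \leq h^{1,0}(X) = 0$ (Igusa). One cosmetic slip: restricting $e \cdot \Delta_X = z \times X + Z$ to the generic fibre of the second projection gives $e\,\delta_X = z_{k(X)}$ with $\deg z = e$, not $e \cdot z_{k(X)}$, but this does not affect the conclusion that $e([x]-[x_0]) = 0$.
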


The lemma is a consequence of Rojtman's theorem and says that for smooth projective varieties over algebraically closed field with $h^{1,0} = 0$ the torsion order is infinite or it is the torsion order relative to any non-empty closed zero-dimensional subset.

\begin{proof}
    Let $X$ be a smooth projective variety over an algebraically closed field with Hodge number $h^{1,0} = 0$. Assume that the torsion order $\Tor(X)$ is finite. 
    Then it follows by an ``action of correspondence'' argument that the group of degree $0$ zero-cycles $\CH_0(X)_0$ is $l := \Tor(X)$-torsion.
    By Rojtman's theorem \cite{BlochRoitman,Roi80,Mil82}, the Albanese morphism induces an isomorphism
    $$
        \CH_0(X)_0 \longrightarrow \operatorname{Alb}(X)(l)
    $$
    between $\CH_0(X)_0$ and the $l$-torsion points of the Albanese variety of $X$. By \cite{Igusa-Alb}, we have $\dim \operatorname{Alb}(X) \leq h^{1,0}(X) = 0$. Hence $\CH_0(X)_0 = 0$, which proves the lemma.
\end{proof}

Recall that the relative torsion order behaves well under degeneration.

\begin{lemma}[{\cite[Lemma 3.8]{LS24}}]\label{lem:torsion-order-degeneration}
    Let $R$ be a dvr with fraction field $K$ and residue field $k$. Let $\mathcal{X}\rightarrow \Spec R$ be a separated flat $R$-scheme of finite type with geometrically integral fibres. 
    Denote the geometric generic fibre by $\bar{X} := \mathcal{X} \times_R \bar{K}$ and set $\bar{Y} := \mathcal{X} \times_R \bar{k}$.
    
    Then for any ring $\Lambda$ and for any closed subscheme $W_{\mathcal{X}} \subset \mathcal{X}$ such that the fibres of $\mathcal{X} \setminus W_{\mathcal{X}}$ are non-empty we have
    $$
        \Tor^\Lambda(\bar{Y},W_{\mathcal{X}} \times_{\mathcal{X}} \bar{Y})\mid \Tor^\Lambda (\bar{X},W_{\mathcal{X}} \times_{\mathcal{X}} \bar{X}).
    $$
\end{lemma}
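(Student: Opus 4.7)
My plan is to transport the diagonal decomposition from the geometric generic fibre to the geometric special fibre via the specialization map on Chow groups for flat families over a dvr, exploiting flatness of the relative diagonal and a dimension bookkeeping for the horizontal closures of the relevant cycles.

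First, I would unfold the torsion assumption. Assume $e := \Tor^\Lambda(\bar X, W_{\mathcal X} \times_{\mathcal X} \bar X)$ is finite (otherwise the claim is vacuous) and set $\bar W_X := W_{\mathcal X} \times_{\mathcal X} \bar X$ and $\bar U_X := \bar X \setminus \bar W_X$. The definition gives $e \cdot \delta_{\bar X}|_{\bar U_X} = 0$ in $\CH_0(\bar U_X \times_{\bar K} \bar K(\bar X), \Lambda)$. Writing this group as the colimit of $\CH_{\dim \bar X}(\bar U_X \times_{\bar K} V, \Lambda)$ over non-empty opens $V \subseteq \bar X$ and then applying the localization sequence twice — first for the closed complement $\bar U_X \times (\bar X \setminus V)$ inside $\bar U_X \times \bar X$, then for the closed complement $\bar W_X \times \bar X$ inside $\bar X \times \bar X$ — yields a proper closed subset $D \subsetneq \bar X$ together with a decomposition
\[
    e \cdot \Delta_{\bar X} \;=\; z + Z' \;\in\; \CH_{\dim \bar X}(\bar X \times_{\bar K} \bar X, \Lambda),
\]
where $z$ is supported on $\bar W_X \times_{\bar K} \bar X$ and $Z'$ is supported on $\bar X \times_{\bar K} D$.

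Next, I would spread this decomposition out. After replacing $R$ by a dvr extension inside its strict henselization with the same residue field $\bar k$ (which leaves both geometric fibres — and hence the claim — unchanged by \Cref{lem:properties-of-torsion-order}), I may assume that $z$ and $Z'$ are already defined over the generic fibre $\mathcal X_K$. Their schematic closures in $\mathcal X \times_R \mathcal X$ are flat over $R$, and Fulton's specialization map (closure followed by restriction to the special fibre) applied to the generic-fibre decomposition produces
\[
    e \cdot \Delta_{\bar Y} \;=\; \specialization(z) + \specialization(Z') \;\in\; \CH_{\dim \bar Y}(\bar Y \times_{\bar k} \bar Y, \Lambda),
\]
since the relative diagonal $\Delta_{\mathcal X / R}$ is itself flat and so specializes $\Delta_{\mathcal X_K}$ to $\Delta_{\bar Y}$.

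Finally, I would verify the supports and run the localization argument in reverse. Write $\bar W_Y := W_{\mathcal X} \times_{\mathcal X} \bar Y$ and $\bar U_Y := \bar Y \setminus \bar W_Y$. The horizontal closure of the support of $z$ lies in $W_{\mathcal X} \times_R \mathcal X$, so $\specialization(z)$ is supported on $\bar W_Y \times_{\bar k} \bar Y$; the horizontal closure $\bar D \subset \mathcal X$ of $D$ is flat of relative dimension $\dim D < \dim \bar X = \dim \bar Y$, hence $\bar D \cap \bar Y \subsetneq \bar Y$ is proper and $\specialization(Z')$ is supported on $\bar Y \times_{\bar k} (\bar D \cap \bar Y)$. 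Reversing the two localization sequences then produces $e \cdot \delta_{\bar Y}|_{\bar U_Y} = 0$ in $\CH_0(\bar U_Y \times_{\bar k} \bar k(\bar Y), \Lambda)$, i.e.\ $\Tor^\Lambda(\bar Y, \bar W_Y) \mid e$, as required. The main subtlety is the non-properness of $\mathcal U = \mathcal X \setminus W_{\mathcal X}$: Fulton specialization is not directly functorial for open embeddings, so one has to work with closures inside the ambient $\mathcal X \times_R \mathcal X$ and combine the hypothesis that $\mathcal U$ has non-empty fibres (ensuring $\bar W_Y \subsetneq \bar Y$) with the flat-dimension tracking above to guarantee that the specialized decomposition still witnesses the vanishing on the correct open subscheme of $\bar Y \times_{\bar k} \bar Y$.
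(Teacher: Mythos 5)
The paper does not actually prove this lemma: it is imported verbatim from \cite[Lemma 3.8]{LS24}, where the argument is also a Fulton-specialization argument but works directly with the relative zero-cycle $\delta$ (specializing $\delta_{\bar X}|_{\bar U_X}$ to $\delta_{\bar Y}|_{\bar U_Y}$ over a suitable auxiliary dvr), with no global decomposition of the diagonal. Your route is the classical Voisin/Colliot-Th\'el\`ene--Pirutka style argument adapted to the relative setting: translate finiteness of $\Tor^\Lambda(\bar X,\bar W_X)$ via the colimit description of $\CH_0(\bar U_X\times_{\bar K}\bar K(\bar X))$ and two localization sequences into $e\cdot\Delta_{\bar X}=z+Z'$ with $z$ supported on $\bar W_X\times\bar X$ and $Z'$ on $\bar X\times D$, specialize over $\mathcal X\times_R\mathcal X$, and reverse. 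The substantive points are all in order: the closure of $\Delta_{X_K}$ is $\Delta_{\mathcal X/R}\cong\mathcal X$ (here you use separatedness, and integrality of $\mathcal X$, which follows from flatness plus the geometrically integral generic fibre), so $\specialization(\Delta_{X_K})=\Delta_Y$ with multiplicity one; specialization is compatible with proper pushforward, which justifies the support statements via the closures inside $W_{\mathcal X}\times_R\mathcal X$ and $\mathcal X\times_R\overline D$; and every component of the closure $\overline D$ dominates $\Spec R$, so its special fibre has dimension at most $\dim D<\dim Y$, whence the specialized decomposition restricts to zero on $U_Y\times(Y\setminus\overline D_k)$ and yields $e\cdot\delta_{\bar Y}|_{\bar U_Y}=0$ after passing to the function field and to $\bar k$ (using the non-emptiness hypothesis on the fibres of $\mathcal X\setminus W_{\mathcal X}$). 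So your proof is correct in substance and is a legitimate, slightly longer, alternative to the cited one.

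One step needs repair as written: the reduction ``replace $R$ by a dvr extension inside its strict henselization with the same residue field $\bar k$.'' The decomposition over $\bar K$ is a priori only defined over a finite extension $K'/K$ which may be ramified, hence need not embed into $\Frac(R^{sh})$; moreover the residue field of $R^{sh}$ is the separable closure $k^{sep}$, not $\bar k$. The standard fix is to take any dvr $R'$ with fraction field $K'$ dominating $R$ (for instance a localization of the integral closure of $R$ in $K'$ at a maximal ideal). Its residue field is a finite extension of $k$, so the geometric generic and special fibres, and hence the statement to be proved, are unchanged by \Cref{lem:properties-of-torsion-order}~\ref{item:lem_properties-tor-order:field-extension}; with this replacement (and with $D$ replaced by a proper closed subset of $X_{K'}$ over which it is defined) the rest of your argument goes through verbatim.
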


The key technical result in \cite{LS24} relates the torsion order of the geometric generic fibre of a degeneration into two components with the torsion order of the intersection of the two components. We state a simplified version of \cite[Theorem 4.3]{LS24}.

\begin{theorem}\label{thm:degeneration-torsion-order}
    Let $R$ be a discrete valuation ring with algebraically closed residue field $k$ and fraction field $K$.
    Let $\Lambda$ be a ring of positive characteristic $c \in \Z_{\geq 1}$, i.e.~every $\lambda \in \Lambda$ is $c$-torsion, and assume that $c \in k^\ast$.
    Let $\mathcal{X}$ be a flat separated $R$-scheme of finite type with geometrically integral generic fibre $X = \mathcal{X}_K$ and special fibre $Y = \mathcal{X}_k$.
    Let $W_\mathcal{X} \subset \mathcal{X}$ be a closed subscheme and we denote by $W_V := W_\mathcal{X} \cap V$ the scheme-theoretic intersection with a subscheme $V \subset \mathcal{X}$. Assume the following:
    \begin{enumerate}[label=(\arabic*)]
        \item \label{item:thm_degeneration-torsion-order_2comp} $Y$ consists of two components $Y_0,Y_1$ whose intersection $Z := Y_0 \cap Y_1$ is integral; 
        \item \label{item:thm_degeneration-torsion-order_semistable} $\mathcal{X}^\circ := \mathcal{X} \setminus W_\mathcal{X}$ is a strictly semi-stable $R$-scheme, see \Cref{def:strictly-semi-stable}; 
        \item \label{item:thm_degeneration-torsion-order_rational} the variety $Y_i \setminus W_{Y_i}$ is isomorphic to an open subscheme of $\aff_k^{\dim Y_i}$ for $i = 0,1$.
    \end{enumerate}
    Then we have
    $$
        \Tor^\Lambda(Z,W_Z) \mid \Tor^\Lambda(X \times_K \bar{K},W_{X} \times_K \bar{K}).
    $$
\end{theorem}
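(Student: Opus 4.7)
The plan is to specialize the defining relation for $\Tor^\Lambda(X_{\bar K}, W_{X_{\bar K}})$ from the geometric generic fibre to the intersection $Z$, exploiting the strictly semi-stable structure \ref{item:thm_degeneration-torsion-order_semistable} and the affine rationality of the components \ref{item:thm_degeneration-torsion-order_rational}. Write $e := \Tor^\Lambda(X_{\bar K}, W_{X_{\bar K}})$ and set $\mathcal{X}^\circ := \mathcal{X} \setminus W_\mathcal{X}$, $X^\circ := X \setminus W_X$, $Y_i^\circ := Y_i \setminus W_{Y_i}$ and $Z^\circ := Z \setminus W_Z$. By definition of $e$, the class $e \cdot \delta_{X^\circ_{\bar K}}$ vanishes in $\CH_0(X^\circ_{\bar K} \times_{\bar K} \overline{\bar K(X_{\bar K})}, \Lambda)$.

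After a harmless finite extension of $R$, which leaves the hypotheses intact since $k=\bar k$, I would lift the rational equivalence witnessing $e \cdot \delta_{X^\circ} = 0$ to the integral model $\mathcal{X}^\circ_{k(\mathcal{X})}$ and then apply the specialization formalism of \cite{PS23}, in the non-proper form developed in \cite{LS24}. The strict semi-stability of $\mathcal{X}^\circ$ guarantees that $\mathcal{X}^\circ$ is regular, each $Y_i^\circ$ is a smooth Cartier divisor, and $Z^\circ = Y_0^\circ \cap Y_1^\circ$ is smooth of codimension~$2$; this is the geometric input required to run the specialization and to transport $e \cdot \delta_{X^\circ}$ to a well-defined zero-cycle class on $Y_0^\circ \cup Y_1^\circ$, tensored with the appropriate function field.

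The rationality hypothesis \ref{item:thm_degeneration-torsion-order_rational} is then used to separate the contributions from the two components. The key observation is that for any open subscheme $V \subset \aff^n_k$ with $k$ algebraically closed and any field extension $L/k$ one has $\CH_0(V_L, \Lambda) = 0$: indeed $\CH_0(\aff^n_L) = 0$ by the affine bundle formula, and this vanishing is inherited by opens via the localization sequence. Applied to each $Y_i^\circ$, this forces any part of the specialized class supported away from the double locus $Z^\circ$ to vanish after base change, leaving all non-trivial content on $Z^\circ$. Identifying the residual cycle on $Z^\circ$ with $e \cdot \delta_{Z^\circ}$---using that the specialization of the diagonal along a strictly semi-stable degeneration restricts to the diagonal on the intersection stratum via the factorization $k(\mathcal{X}) \twoheadrightarrow k(Z)$---yields $e \cdot \delta_{Z^\circ} = 0$ in $\CH_0(Z^\circ_{k(Z)}, \Lambda)$, which is exactly $\Tor^\Lambda(Z, W_Z) \mid e$.

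The main obstacle will be executing the specialization cleanly in the non-proper setting, in particular verifying that the specialization of the diagonal is the diagonal of $Z$ and that the cycles involved remain controlled under base change to the relevant function fields. Since the statement is presented as a simplified form of \cite[Theorem 4.3]{LS24}, one can hope to deduce it directly from that reference once the matching of hypotheses is spelled out: the restriction to two components meeting in a single integral stratum $Z$ trivializes the combinatorics of more general strata, collapsing the conclusion to the single divisibility above.
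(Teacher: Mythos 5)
The paper offers no proof of this statement at all: it is stated as a simplified version of \cite[Theorem 4.3]{LS24} and used as a black box, so the only ``paper proof'' to compare with is the deduction from that reference --- which is precisely the fallback you name in your last paragraph. On that level your proposal coincides with the paper; the substantive question is whether your independent sketch of the internal argument would work, and there it has a genuine gap.

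The gap is at the step where you use assumption (3). You argue that since $\CH_0(V_L,\Lambda)=0$ for any open $V\subset\aff^n_k$ and any extension $L/k$, ``any part of the specialized class supported away from the double locus vanishes, leaving all non-trivial content on $Z^\circ$.'' But that vanishing holds on all of $Y_i^\circ$, not just away from $Z^\circ$: every zero-cycle on $Y^\circ_L=Y_{0,L}^\circ\cup Y_{1,L}^\circ$ is a sum of pushforwards from the two components, and each such class already dies in the component, hence in $Y^\circ_L$. So $\CH_0(Y^\circ_L,\Lambda)=0$ outright, the specialized relation is vacuous, and nothing is ``pushed onto'' $Z^\circ$ by this observation alone --- taken literally, your mechanism proves nothing about $Z$. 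The real content of \cite{PS23,LS24} is a finer bookkeeping: one uses the rationality of a component to choose an explicit rational equivalence trivializing the specialized diagonal there, and then tracks how that equivalence meets the other component and the stratum $Z^\circ$, with the ambiguity coming from $1$-cycles on the components controlled by an exact-sequence argument for the closed cover, together with a verification that the resulting class on $Z^\circ$ is $e\cdot\delta_{Z^\circ}$ independently of all choices. Two further points you wave at are also where the cited proof does real work: ``the factorization $k(\mathcal{X})\twoheadrightarrow k(Z)$'' is not a map of fields (the diagonal specializes through a chain of places, first to $k(Y_i)$, then into $Z$), and passing from the geometric generic fibre over $\bar K$ to an honest dvr situation requires ramified base changes under which strict semi-stability of $\mathcal{X}^\circ$ must be preserved or restored. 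As a citation your proposal matches the paper; as a self-contained proof it is incomplete at exactly the decisive step.
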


\subsection{Very general elements of a family}\label{sec:very-general}

We recall the notion of a very general element in a family, as stated for instance in \cite[Section 2.5]{Sch-survey}.

\begin{definition}\label{def:very-general}
    Let $k$ be a field and let $\pi \colon \mathcal{X} \to B$ be a proper flat morphism of algebraic $k$-schemes with $B$ geometrically integral. We say that a closed point $b \in B$ is \emph{very general (with respect to $\pi$)} if there exist (algebraically closed) field extensions $K/\kappa(b)$ and $L/k(B)$ and an isomorphism $\varphi \colon K \to L$ of fields which induces an isomorphism
    $$
        X_b \times_{\kappa(b)} K \cong \mathcal{X} \times_{k(B)} L,
    $$
    where $X_b := \pi^{-1}(b)$ denotes the fibre over $b$. 
    
    If $B$ is a fine moduli space or a parameter space of some class of algebraic varieties and $\pi \colon \mathcal{X} \to B$ is its universal family, we often simply say that $X_b$ is very general if $b \in B$ is.
\end{definition}

\begin{remark}
    If $k$ is an uncountable algebraically closed field, then \cite[Lemma 2.1]{Vi13} shows that there exists a subset $U \subset B(k)$, which is the intersection of countably many non-empty open subsets of $B$, such that any point in $U$ is very general in the above sense.
\end{remark}

We say an algebraic scheme $X$ over a field $L$ \emph{degenerates} to an algebraic scheme $Y$ over an algebraically closed field $k$, if there exist a dvr $R$ with residue field $k$ and fraction field $K$, a flat proper morphism $\mathcal{X}\rightarrow \Spec R$, and an injection of field $K \hookrightarrow L$ such that $Y \cong \mathcal{X} \times_R k$ and $X \cong \mathcal{X} \times_R L$, see for example \cite[Section 2.6]{Sch19JAMS}.

\begin{lemma}\label{lem:very-general}
    Let $k$ be a field and let $\pi \colon \mathcal{X} \to B$ be a proper flat morphism of algebraic $k$-schemes such that $B$ is geometrically integral.
    \begin{enumerate}[label=(\roman*)]
        \item \label{item:very-gen:existence} If $\trdeg_{k_0} k \geq \dim B$, where $k_0 \subset k$ is a subfield over which $\pi$ is defined, then there exists a very general element $b \in B$.
        \item \label{item:very-gen:field-ext} Let $k'/k$ be a field extension.
        If $b \in B$ is very general with respect to $\pi$, then any closed point $b' \in B_{k'}$ lying over $b$ is very general with respect to $\pi_{k'} \colon X_{k'} \to B_{k'}$.
        \item \label{item:very-gen:degeneration} Up to a base-change, a very general fibre $X_b$ degenerates to the fibre $X_0$ over any closed point $0 \in B$ in the above sense.
    \end{enumerate}
\end{lemma}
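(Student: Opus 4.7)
For part \ref{item:very-gen:existence}, I would fix a $k_0$-model $\pi_0 \colon \mathcal{X}_0 \to B_0$ of $\pi$ with generic point $\eta_0$, so that $F_0 := k(B_0)$ is finitely generated over $k_0$ of transcendence degree $n := \dim B$. Choosing a transcendence basis $t_1, \dots, t_n$ of $F_0/k_0$ and algebraically independent elements $s_1, \dots, s_n \in k$ over $k_0$ (available since $\trdeg_{k_0} k \geq n$), the assignment $t_i \mapsto s_i$ yields a $k_0$-embedding $k_0(t_1, \dots, t_n) \hookrightarrow k$, which extends to a $k_0$-embedding $F_0 \hookrightarrow \kappa$ for some finite extension $\kappa/k$ obtained by adjoining roots of the minimal polynomials of the remaining finitely many algebraic generators of $F_0$. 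The induced $k_0$-morphism $\Spec(\kappa) \to B_0$ factors through $\Spec(\kappa) \to B = B_0 \times_{k_0} k$, whose image is a closed point $b$ (since $\kappa/k$ is finite) mapping to $\eta_0$; base changing to a sufficiently large algebraically closed common extension of $\kappa(b)$ and $k(B)$ then identifies $X_b$ with the geometric generic fibre of $\pi$.

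For part \ref{item:very-gen:field-ext}, let $(K, L, \varphi)$ witness that $b$ is very general, i.e.\ $X_b \times_{\kappa(b)} K \cong \mathcal{X} \times_{k(B)} L$ under $\varphi$. A closed point $b' \in B_{k'}$ lying over $b$ has residue field $\kappa(b')$ realized as a residue field at a maximal ideal of $\kappa(b) \otimes_k k'$. I would enlarge $K$ to an algebraically closed $K'$ containing $\kappa(b')$ (compatibly with $\kappa(b) \hookrightarrow K$) and, via $\varphi$, enlarge $L$ to an algebraically closed $L' \supseteq k(B_{k'}) = \Frac(k(B) \otimes_k k')$, where the tensor product is a domain by geometric integrality of $B$. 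Base changing the original isomorphism along $K \hookrightarrow K'$ and $L \hookrightarrow L'$, and then using the identifications $X_{b'} \times_{\kappa(b')} K' = X_b \times_{\kappa(b)} K'$ and $\mathcal{X}_{k'} \times_{B_{k'}} \Spec(L') = \mathcal{X} \times_B \Spec(L')$, yields very generality of $b'$.

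For part \ref{item:very-gen:degeneration}, Chevalley's theorem produces a DVR $R_0$ with fraction field $k(B)$ dominating $\mathcal{O}_{B,0}$, so that the induced morphism $\Spec(R_0) \to B$ sends the generic point to $\eta$ and the closed point to $0$. Pulling back $\pi$ yields a flat proper family over $R_0$ whose generic fibre is the generic fibre of $\pi$ and whose special fibre is a base change of $X_0$. Enlarging $R_0$ to a DVR $R$ with algebraically closed residue field (for instance via strict henselization followed by a further purely inseparable extension of the residue field) while keeping $\Frac(R)$ inside an algebraic closure of $k(B)$, and then invoking the very generality of $b$ to identify the generic fibre of $\mathcal{X}_R$ (after a further base change to a suitable algebraically closed $L \supseteq \Frac(R)$) with a base change of $X_b$, produces the desired degeneration. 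The main obstacle is this last step: one must construct $R$ whose residue field is algebraically closed while keeping $\Frac(R)$ small enough to embed into the algebraically closed field witnessing very generality of $b$, i.e.\ one must jointly control both ends of the DVR without inflating either beyond the ambient data.
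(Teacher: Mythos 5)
Your parts (i) and (ii) are essentially the paper's argument: pass to a model $\pi_0\colon\mathcal{X}_0\to B_0$ over $k_0$, embed $k_0(B_0)$ into a finite extension of $k$ to produce the closed point $b$ lying over the generic point of $B_0$, and then conclude by amalgamating/extending algebraically closed fields (the paper likewise only asserts the existence of the embedding and of the extended isomorphism). One small precision for (i): your ``common extension'' of $\kappa(b)$ and $k(B)$ must be taken over $k_0(B_0)$, i.e.\ the two embeddings of $k_0(B_0)$ have to agree, since that is exactly what identifies both $X_b\times_{\kappa(b)}\Omega$ and $\mathcal{X}\times_{k(B)}\Omega$ with the pullback of the generic fibre of $\pi_0$ along the same map $\Spec\Omega\to\Spec k_0(B_0)$.

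Part (iii) is where you genuinely diverge. The paper first enlarges $k$ to a huge algebraically closed field, chooses by a Bertini-type theorem a geometrically integral curve $C\subset B$ through $0$ and $b$, normalizes, and uses $\mathcal{O}_{C,c_0}$ as the dvr (its residue field is then automatically algebraically closed); you instead take a dvr $R_0$ with fraction field $k(B)$ dominating $\mathcal{O}_{B,0}$ and enlarge its residue field. Your route has the advantage that the generic fibre of the resulting family is literally (a base change of) the generic fibre of $\pi$, so very generality of $b$ applies verbatim, whereas the curve route needs the point of $C$ over $b$ to be very general for the restricted family $\pi_C$, a point the paper passes over quickly; conversely, the curve route gets an algebraically closed residue field for free and avoids valuation-theoretic bookkeeping. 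Your construction of $R$ does work: after strict henselization one successively adjoins roots of unit lifts of residue classes without $p$-th roots; each step has ramification index one, so the filtered union is still a dvr, with algebraically closed residue field and fraction field algebraic over $k(B)$. However, the ``main obstacle'' you flag is not actually one: the field witnessing very generality of $b$ is not fixed. Given $X_b\times_{\kappa(b)}K\cong\mathcal{X}\times_{k(B)}L$, you may replace $L$ by any algebraically closed field containing $\Frac(R)$ of sufficiently large transcendence degree over $k(B)$ and extend the abstract isomorphism $K\to L$ accordingly (exactly the field-theoretic flexibility used in (i) and (ii)), so there is no need to keep $\Frac(R)$ small relative to a prescribed witness field; it is only the ``up to base-change'' in the statement that absorbs the fact that the special fibre is $X_0$ base-changed to the residue field of $R$.
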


Note that there exists a subfield $k_0 \subset k$, which is finitely generated over its prime field, such that $\pi$ is defined over $k_0$, as $\pi \colon \mathcal{X} \to B$ is a morphism of algebraic schemes over $k$. 
We provide a proof of this well-known lemma for the convenience of the reader.

\begin{proof}
    We start proving \ref{item:very-gen:existence}. Let $k_0 \subset k$ be a subfield such that $\pi$ is defined over $k_0$, i.e.~there exists a morphism $\pi_0 \colon \mathcal{X}_0 \longrightarrow B_0$ of algebraic schemes over $k_0$ such that all squares in the diagram
    $$
        \begin{tikzcd}
            \mathcal{X} \arrow[r,"\pi"] \arrow[d] & B \arrow[r] \arrow[d] & \Spec k \arrow[d] \\
            \mathcal{X}_0 \arrow[r,"\pi_0"] & B_0 \arrow[r] & \Spec k_0
        \end{tikzcd}
    $$
    are Cartesian. Assuming $\trdeg_{k_0} k \geq \dim B$, there exists a finite field extension $k^+/k$ and a homomorphisms of fields $k_0(B_0) \to k^+$.
    By the universal property of the fibre product, there exists a morphism $\iota \colon \Spec k^+ \to B$ of $k$-schemes such that the composition $\Spec k^+ \to B \to B_0$ factors through $\Spec k_0(B_0)$.
    In particular, $\iota$ determines a closed point $b \in B$ and we claim that $b$ is very general. 
    Indeed, note that $\mathcal{X} = \mathcal{X}_0 \times_{B_0} B$ and the morphisms $$
        \Spec k^+ \overset{\iota}{\longrightarrow} B \longrightarrow B_0 \quad \text{and} \quad \Spec k(B) \overset{\eta}{\longrightarrow} B \longrightarrow B_0,
    $$
    factors through $\Spec k_0(B_0)$, where $\eta$ is the inclusion of the generic point. Thus
    $$
        \begin{tikzcd}
        X_b \times_{\kappa(b)} k^+ \arrow[r] \arrow[d] & \mathcal{X}_0 \times_{B_0} \Spec k_0(B_0) \arrow[d] &  X_\eta \arrow[r] \arrow[d] & \mathcal{X}_0 \times_{B_0} \Spec k_0(B_0) \arrow[d] \\
        \Spec k^+ \arrow[r] & \Spec k_0(B_0) &  \Spec k(B) \arrow[r] & \Spec k_0(B_0).
        \end{tikzcd}
    $$
    are Cartesian squares.
    Here, $\kappa(b)$ denotes the residue field of the closed point $b$, which is a subfield of $k^+$ and $X_\eta$ denotes the generic fibre of the morphism $\pi \colon \mathcal{X} \to B$. In particular, it suffices to find (algebraically closed) field extensions $K/k^+$ and $L/k(B)$ together with an isomorphism of field $K \to L$ which fixes $k_0(B_0)$, see also the argument in \cite[Lemma 2.1]{Vi13}.
    The existence of such fields $K$ and $L$ is clear, as any two algebraically closed field extension of the same transcendence degree over the same field are abstractly isomorphic.

    By a similar argument as above, the statement \ref{item:very-gen:field-ext} reduces to the following question on fields. Given an isomorphism of algebraically closed fields $\varphi \colon K \to L$ with subfields $\kappa(b) \subset K$ and $k(B) \subset L$ as well as field extensions $\kappa(b')/\kappa(b)$ and $k'(B_{k'})/k(B)$, there exists algebraically closed fields $K'$ and $L'$ and an isomorphism $\varphi' \colon K' \to L'$ such that $K'$ contains $\kappa(b')$ and $K$ as subfields, $L'$ contains $k'(B_{k'})$ and $L$ as subfields, and $\varphi'$ extends $\varphi$. The existence follows by the same reasoning as in the proof of \ref{item:very-gen:existence}.

    We turn to the proof of \ref{item:very-gen:degeneration}. Let $0 \in B$ be any closed point. Up to replacing $k$ by an algebraically closed field extension whose transcendence degree over $k$ is at least $\dim B$, we can assume by \ref{item:very-gen:existence} that there exists a closed point $b \in B$, which is very general in the sense of \Cref{def:very-general}.
    It clearly suffices to show that the fibre $X_b := \pi^{-1}(b)$ degenerates to $X_0$.
    By a Bertini-type argument, there exists a geometrically integral curve $C \subset B$ containing $0$ and $b$, see e.g.~\cite[Corollary 1.9]{CP-Bertini}. 
    Up to replacing $C$ by its normalization, we can assume that there exists a smooth geometrically integral curve $C$ and a flat proper morphism $\pi_C \colon \mathcal{X}_C \to C$ and two closed points $c_0,c \in C$ 
    such that $c \in C$ is very general in the sense of \Cref{def:very-general} and the fibres over $c_0$ and $c$ are isomorphic to $X_0$ and $X_b$, respectively.
    Thus we see that $X_b$ degenerates to $X_0$, as $\mathcal{O}_{C,c_0}$ is a dvr.
\end{proof}

\begin{example}
    We will use the following two examples of families over a field $k$.
    \begin{enumerate}[label=(\alph*)]
        \item Complete intersections in $\CP^N$: Let $\mathbf{d} := (d_1,\dots,d_s) \in \Z^{s}_{\geq 1}$ be a collection of positive integers, where $1 \leq s < N$ is an integer. Then the affine scheme
        $$
            B_1 := H^0(\CP^N,\mathcal{O}_{\CP^N}(d_1)) \times_k \dots \times_k H^0(\CP^N,\mathcal{O}_{\CP^N}(d_s))
        $$
        parametrizes sets of $s$ polynomials $f_1,\dots,f_s \in k[x_0,\dots,x_N]$ with $\deg f_i = d_i$. 
        Let $\mathcal{X}_1 \subset B_1 \times_k \CP^N$ be the closed subscheme such that the fibre of $\mathcal{X}_1 \to B_1$ over a point corresponding to $f_1,\dots,f_s$ is the closed subscheme of $\CP^N$ cut out by the homogeneous ideal $(f_1,\dots,f_s)$.
        There exists an open subscheme $B \subset B_1$ such that the fibre over any point $b \in B$ has dimension $N - s$. Thus the morphism $\mathcal{X} := \mathcal{X}_1 \times_{B_1} B \to B$ is a family of complete intersections of multidegree $\mathbf{d}$.

        Other choices for $B$ include (an open subscheme of) a product of Grassmannians or an iteratated Grassmannian bundle over $\Spec k$. We refer the reader to \cite[\S 2.2]{Ben12} and \cite[\S 1.2]{DL23} for more details on moduli spaces for complete intersections.
        \item The parameter space of hypersurfaces of multidegree $\mathbf{d} := (d_1,\dots,d_s)$ in the product of projective spaces $\CP^{M_1}_k \times_k \dots \times_k \CP^{M_s}_k$ is given by $$
            B := \CP\left(H^0\left(\CP^{M_1},\mathcal{O}_{\CP^{M_1}}(d_1)\right) \otimes_k \dots \otimes_k H^0\left(\CP^{M_s},\mathcal{O}_{\CP^{M_s}}(d_s)\right)\right).
        $$
        Then we consider the universal family $\mathcal{X} \to B$ of multidegree $\mathbf{d}$ hypersurfaces in $\CP^{M_1}_k \times_k \dots \times_k \CP^{M_s}_k$.
    \end{enumerate}
\end{example}

\section{Affine degenerations}\label{sec:affine-degenerations}

In this section, we construct (strictly semi-stable) families over a dvr, which satisfy the assumptions of \Cref{thm:degeneration-torsion-order}. In particular, we need to write down a family whose special fibre consists of two irreducible components and whose total space is regular. This is straightforward in the affine setting.
Let $A$ be a smooth $k$-algebra of finite type and let $f_1,f_2 \in A$, then the natural morphism \begin{equation}\label{eq:naive-degeneration}
    \Spec \left(A \otimes_k k[t]_{(t)}/(t-f_1f_2)\right) \longrightarrow \Spec k[t]_{(t)}
\end{equation}
is a degeneration such that the total space is regular and the special fibre is equal to
$$
    \Spec A/(f_1f_2) \cong \Spec \left( A/(f_1) \right) \cup \Spec \left(A/(f_2)\right).
$$
A key assumption in \Cref{thm:degeneration-torsion-order} is the ``rationality'' condition \ref{item:thm_degeneration-torsion-order_rational}, which we reformulate to an algebraic condition for degenerations as \eqref{eq:naive-degeneration} in the first part.
In the second part, we provide two general constructions of families as in \eqref{eq:naive-degeneration} which satisfy the assumptions in \Cref{thm:degeneration-torsion-order}.

\subsection{Strongly rational algebras}

The assumption \ref{item:thm_degeneration-torsion-order_rational} in \Cref{thm:degeneration-torsion-order} states that the irreducible components of the special fibre are isomorphic to an open subscheme of affine space. Rephrasing this geometric condition algebraically leads to the following definition.

\begin{definition}\label{def:strongly-rational}
    Let $k$ be a field. A finite type $k$-algebra $B$ is called \emph{strongly $k$-rational} if $B$ is isomorphic as a $k$-algebra to the localization of a polynomial ring over $k$ and $B \neq 0$.
\end{definition}

\begin{example}\label{ex:strongly-rational-algebra}
    \begin{enumerate}[label=(\arabic*)]
        \item The polynomial ring $k[x_1,\dots,x_n]$ is clearly a strongly $k$-rational algebra for every $n \geq 0$.
        \item \label{item:strongly-rational-algebra:1+xy} The finite type $k$-algebra $ k[x_1,x_2]/(1 + x_1x_2)$ is strongly $k$-rational, as $$
            k[x_1,x_2]/(1 + x_1x_2) \cong k[x_2,x_2^{-1}].
        $$
        \item\label{item:strongly-rational-algebra:field-ext} If $B$ is a strongly $k$-rational algebra, then $B \otimes_k L$ is strongly $L$-rational for every field extension $L/k$.
    \end{enumerate}
\end{example}

The following remark verifies that the spectrum of a strongly rational $k$-algebra is isomorphic to an open in affine space, in particular rational.

\begin{remark}\label{rem:strongly-rational-algebra}
    Let $B$ be a strongly $k$-rational algebra.
    Then there exists a polynomial ring $A = k[x_1,\dots,x_n]$ and a multiplicative subset $S \subset A$ such that $B \cong S^{-1} A$.
    Since $B$ is also of finite type over $k$ by the definition of strongly rational $k$-algebra, we find that $\Spec B \to \Spec A = \aff^n_k$ is an open immersion by \cite[Theorem 2.7]{Oda04}, see also the Added part in \textit{loc.\ cit.} for an alternative argument\footnote{We found the argument and reference through an \href{https://mathoverflow.net/a/20792}{answer of Martin Brandenburg on mathoverflow}}.
\end{remark}

The above remark implies that strongly $k$-rational algebras are geometrically integral and smooth $k$-algebras. We provide an algebraic proof for the convenience of the reader.

\begin{lemma}\label{lem:strongly-rat-implies-geom-int-smooth}
    A strongly $k$-rational algebra $B$ is a geometrically integral, smooth $k$-algebra.
\end{lemma}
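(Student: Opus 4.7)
The plan is to leverage the explicit presentation guaranteed by \Cref{def:strongly-rational}: I would start by writing $B \cong S^{-1}A$ with $A = k[x_1,\dots,x_n]$ and $S \subset A$ a multiplicative subset satisfying $0 \notin S$ (the latter because $B \neq 0$). From this presentation the two assertions decouple and can be handled separately.

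For geometric integrality, I would take an arbitrary field extension $L/k$ and use that localization commutes with tensor products to identify
\[
    B \otimes_k L \;\cong\; \tilde{S}^{-1}\bigl(L[x_1,\dots,x_n]\bigr),
\]
where $\tilde{S}$ denotes the image of $S$ in $L[x_1,\dots,x_n]$. Flatness of $k \to L$ gives an injection $A \hookrightarrow L[x_1,\dots,x_n]$, whence $0 \notin \tilde{S}$, so $B \otimes_k L$ embeds into the fraction field $L(x_1,\dots,x_n)$ and in particular is a domain.

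For smoothness over $k$, I would compose two formally smooth maps: the polynomial ring $A$ is smooth, and hence formally smooth, over $k$, and any localization $A \to S^{-1}A$ is formally \'etale, a fortiori formally smooth. Therefore the composite $k \to B$ is formally smooth. Since strong $k$-rationality forces $B$ to be of finite type, hence finitely presented, over the field $k$, the standard criterion ``formally smooth plus finitely presented equals smooth'' for $k$-algebras yields the smoothness of $B$. I do not foresee a genuine obstacle; the only subtlety worth flagging is that the finite-type assumption is genuinely used in the smoothness step, as otherwise one could take $S = A \setminus \{0\}$ and obtain $\Frac A = k(x_1,\dots,x_n)$, which for $n \geq 1$ is not a smooth $k$-algebra in the finite-type sense. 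This hypothesis is, however, part of \Cref{def:strongly-rational}, so the argument goes through.
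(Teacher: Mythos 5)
Your proof is correct and follows essentially the same route as the paper, which disposes of the lemma in one line by noting that a localization of a polynomial ring is obviously a domain and formally smooth (hence smooth, being of finite type). Your write-up is simply a more careful expansion of that observation --- in particular, your direct base-change argument for geometric integrality ($B \otimes_k L$ is again a localization of $L[x_1,\dots,x_n]$) is a welcome precision, since ``integral domain plus smooth'' alone would not formally yield geometric integrality.
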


\begin{proof}
    It suffices to prove that $B$ is an integral domain and (formally) smooth over $k$, which is obvious as $B$ is by definition isomorphic as a $k$-algebra to the localization of a polynomial ring over $k$.
\end{proof}

\subsection{Affine strictly semi-stable families}

We provide a general setup to construct affine strictly semi-stable schemes over the dvr $k[t]_{(t)}$ such that its special fibre consists of two irreducible components which are rational.

\begin{definition}\label{def:admissible-pair}
    Let $k$ be a field and let $A$ be an integral smooth $k$-algebra of finite type and of Krull dimension $N+1$. We say that two elements $f_1 \in A[z] := A \otimes_k k[z]$ and $f_2 \in A$ are \emph{admissible with respect to $A$} if the quotient ring $A[z]/(f_1,f_2)$ is a geometrically integral $k$-algebra of dimension $N$ and the $k$-algebras $A[z]/(f_1)$ and $A/(f_2)$ are strongly $k$-rational and of Krull dimension $N+1$ and $N$, respectively.
\end{definition}

The above definition is motivated by the following proposition.

\begin{proposition}\label{prop:general-deg-strictly-semi-stable}
    Let $k$ be a field and let $A$ be an integral smooth $k$-algebra of finite type and of Krull dimension $N+1$. Assume that $f_1 \in A[z]$ and $f_2 \in A$ are admissible with respect to $A$ in the sense of \Cref{def:admissible-pair}. Then the affine scheme
    \begin{equation}\label{eq:general-strictly-semi-stable}
        \Spec (A[z]_{\partial_z f_1} \otimes_k R)/(t-f_1f_2)
    \end{equation}
    is a strictly semi-stable $R := k[t]_{(t)}$-scheme.
    Moreover, the $k(t)$-algebra \begin{equation}\label{eq:general-strictly-semi-stable-closure-generic-fibre}
        A[z] \otimes_k k(t)/(t-f_1f_2)
    \end{equation}
    is an integral domain.
\end{proposition}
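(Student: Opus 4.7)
The strategy is to verify systematically the conditions of \Cref{def:strictly-semi-stable} for $\mathcal{X} := \Spec \mathcal{A}$ with $\mathcal{A} := (B \otimes_k R)/(t - f_1 f_2)$ and $B := A[z]_{\partial_z f_1}$, deriving each condition from the admissibility of $(f_1,f_2)$ together with the Jacobian criterion. The central algebraic input is the transcendence of $f_1 f_2 \in A[z]$ over $k$.

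First I would prove transcendence by induction on degree: if $p(f_1 f_2)=0$ for a nonzero $p \in k[t]$, then reducing modulo $f_1$ inside the nonzero integral domain $A[z]/(f_1)$ (integral since strongly $k$-rational, by \Cref{lem:strongly-rat-implies-geom-int-smooth}) forces $p(0)=0$, so $p(t)=tq(t)$; cancelling the nonzero element $f_1 f_2$ in the domain $A[z]$ gives $q(f_1 f_2)=0$, and iteration yields $p=0$. Via the isomorphism $A[z][t]/(t-f_1 f_2)\cong A[z]$, $t \mapsto f_1 f_2$, the ring $A[z]\otimes_k k(t)/(t-f_1 f_2)$ is identified with the localization of $A[z]$ at the multiplicative set $\{p(f_1 f_2): p\in k[t]\setminus\{0\}\}$, all of whose elements are nonzero by transcendence; this localization of a domain is integral, proving the ``moreover'' claim. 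The same identification gives $\mathcal{A}=B_T$ with $T=\{p(f_1 f_2):p(0)\neq 0\}$, so $\mathcal{A}$ is integral, separated, of finite type over $R$, and $R$-flat (since $t \mapsto f_1 f_2$ is a non-zero-divisor in the domain $\mathcal{A}$).

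Next, for smoothness of the generic fibre $\mathcal{A}\otimes_R k(t)=(B\otimes_k k(t))/(t-f_1 f_2)$ over $k(t)$, I would apply the Jacobian criterion to the $k(t)$-smooth algebra $B\otimes_k k(t)$: since $t$ lies in the base field, $d(t-f_1 f_2)=-(f_2\,df_1+f_1\,df_2)$, whose $dz$-component equals $-(\partial_z f_1)\,f_2$. As $\partial_z f_1$ is a unit in $B$ and the relation $t=f_1 f_2 \neq 0$ on the generic fibre forces $f_2\neq 0$ there, this component is nonzero on the entire zero locus, so the generic fibre is smooth.

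For the special fibre $Y:=\mathcal{A}/(t)=B/(f_1 f_2)$, I would observe that $(f_1),(f_2)\subset A[z]$ are distinct height-one primes (the quotients being integral of dimension $N+1$, with $A[z]/(f_2)=(A/(f_2))[z]$ inheriting integrality from the strong $k$-rationality of $A/(f_2)$; distinctness follows from admissibility since otherwise $A[z]/(f_1,f_2)$ would not have dimension $N$). Hence $(f_1 f_2)=(f_1)\cap(f_2)$, and the components $Y_0=B/(f_1)$, $Y_1=B/(f_2)$ are principal open subschemes of the strongly $k$-rational $\Spec A[z]/(f_1)$ and $\Spec (A/(f_2))[z]$, so each $Y_i$ is itself strongly $k$-rational, hence smooth and geometrically integral of dimension $N+1$; geometric reducedness of $Y$ then follows by base-change stability of strong $k$-rationality (\Cref{ex:strongly-rational-algebra}). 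The components are Cartier since $f_1,f_2$ are non-zero-divisors in the domain $\mathcal{A}$. Finally, the intersection $Y_0\cap Y_1=B/(f_1,f_2)$ is an open subscheme of the geometrically integral scheme $\Spec A[z]/(f_1,f_2)$ of dimension $N$ (by admissibility), and its smoothness follows from the Jacobian criterion via the block form
\[
    \begin{pmatrix} \partial_z f_1 & \partial_{x_i} f_1 \\ 0 & \partial_{x_i} f_2 \end{pmatrix},
\]
where $\partial_z f_1$ is a unit in $B$ and the bottom-right row $(\partial_{x_i} f_2)_i$ is nonzero at every point of $\{f_2=0\}$ by smoothness of $\Spec A/(f_2)$. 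The main technical obstacle is keeping track of which admissibility hypothesis guarantees each integrality, smoothness, and dimension statement after the simultaneous localization by $\partial_z f_1$ and quotient by $t-f_1 f_2$; once the block structure of the Jacobian is in place, the smoothness and dimension assertions follow routinely.
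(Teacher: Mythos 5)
Your proposal is correct, and its overall plan is the same as the paper's: verify the conditions of \Cref{def:strictly-semi-stable} one by one, using admissibility to supply integrality, smoothness and dimensions. The differences are in the sub-arguments, and they are worth recording. For flatness, you use that $t\mapsto f_1f_2$ is a non-zero-divisor in a domain (torsion-freeness over the dvr), whereas the paper deduces it from smoothness of $A[z]\cong A[z,t]/(t-f_1f_2)$ over $k$ together with base change along $k\to k[t]$; your route is more elementary. Your explicit transcendence argument for $f_1f_2$ over $k$ is a genuine gain in rigor: the paper's ``a localization of a domain is a domain'' tacitly requires that $0$ is not inverted, i.e.\ exactly that $p(f_1f_2)\neq 0$ for all nonzero $p\in k[t]$, and you supply this (one can also see it quickly: $f_1f_2$ is a nonzero non-unit of the domain $A[z]$, so it cannot be algebraic over $k$). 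Likewise, your reduction of geometric reducedness of the special fibre to $(f_1f_2)=(f_1)\cap(f_2)$ (which holds for distinct principal primes in a domain, no UFD needed) makes explicit a point the paper leaves implicit. The smoothness verifications are the same computation in different packaging: the paper exhibits the generic fibre and the intersection as standard smooth algebras over $A\otimes_k k(t)$ resp.\ $A/(f_2)$, while you run the Jacobian criterion with the invertibility of $\partial_z f_1$ and the invertibility of $f_2$ on the generic fibre. One small repair is needed in your block-matrix step: $A$ is an abstract smooth $k$-algebra, so the entries $\partial_{x_i}f_2$ do not literally exist; the correct statement is that $df_2$ has nonzero image in $\Omega_{A/k}\otimes\kappa(x)$ at every point $x\in V(f_2)$, which follows from the smoothness of the strongly $k$-rational quotient $A/(f_2)$ (of dimension $\dim A-1$) via the conormal sequence, by \Cref{lem:strongly-rat-implies-geom-int-smooth}. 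With that rephrasing, the linear independence of $df_1,df_2$ along $V(f_1,f_2)$ (using that the $dz$-component of $df_1$ is the unit $\partial_z f_1$ and that of $df_2$ vanishes) goes through, and your proof is complete; alternatively, quotienting by $f_2$ first and viewing $f_1$ as a hypersurface with invertible $z$-derivative over the smooth base $A/(f_2)$ reproduces the paper's shorter argument.
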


\begin{proof}
    We check the definition of strictly semi-stable $R$-schemes, see \Cref{def:strictly-semi-stable}.
    Note first that the $k[t]$-algebra $$
        B := A[z,t]/(t - f_1f_2) \cong A[z]
    $$ 
    is a smooth $k$-algebra and thus in particular flat. As $k[t]$ is an unramified $k$-algebra, $B$ is a flat $k[t]$-algebra by \cite[Remark 18.30]{GW-AG2}. Thus the $R$-algebra
    $$
        B \otimes_{k[t]} R = (A[z] \otimes_k R)/(t-f_1f_2) 
    $$
    is flat. Localizing at the multiplicative subset $\{(\partial_z f_1)^n : n \in \N\} \subset B \otimes_{k[t]} R$, then implies that \eqref{eq:general-strictly-semi-stable} is a flat $R$-scheme.
    Since $B$ is clearly an integral domain and being an integral domain is preserved under localization, it follows that the $k(t)$-algebra \eqref{eq:general-strictly-semi-stable-closure-generic-fibre} is an integral domain and the affine scheme \eqref{eq:general-strictly-semi-stable} is integral.
    
    The generic fibre of the $R$-scheme \eqref{eq:general-strictly-semi-stable} is the spectrum of the $k(t)$-algebra
    \begin{equation}\label{eq:general-strictly-semi-stable-generic-fibre}
        (A \otimes_k k(t))[z,w]/(t-f_1f_2,w \partial_z f_1 - 1),
    \end{equation}
    which is a standard smooth $A \otimes_k k(t)$-algebra in the sense of \cite[\href{https://stacks.math.columbia.edu/tag/00T6}{Tag 00T6}]{stacks-project}. Thus \eqref{eq:general-strictly-semi-stable-generic-fibre} is a smooth $k(t)$-algebra, as $A$ is a smooth $k$-algebra by assumption.

    The special fibre of the $R$-scheme \eqref{eq:general-strictly-semi-stable} has two irreducible components, $\Spec A[z]_{\partial_z f_1}/(f_1)$ and $\Spec A[z]_{\partial_z f_1}/(f_2)$, which are both smooth $k$-schemes by \Cref{lem:strongly-rat-implies-geom-int-smooth} and clearly Cartier divisors in \eqref{eq:general-strictly-semi-stable}. The intersection is the spectrum of the $k$-algebra
    $$
        A[z]_{\partial_z f_1}/(f_1,f_2) \cong \left(A/(f_2)\right)[z,w]/(f_1,w \partial_z f_1 - 1),
    $$
    which is a smooth $k$-algebra by a similar argument as for \eqref{eq:general-strictly-semi-stable-generic-fibre}.
    Note that this uses the fact that $A/(f_2)$ is strongly $k$-rational and thus a smooth $k$-algebra by \Cref{lem:strongly-rat-implies-geom-int-smooth}. 
\end{proof}

The following example provides a way to construct admissible pairs with respect to strongly $k$-rational algebras.

\begin{example}\label{ex:adding-hyperplane}
    Let $k$ be a field and let $B$ be a strongly $k$-rational algebra, e.g.\ a polynomial ring. Let $f \in B[z]$ such that $B[z]/(f)$ is geometrically integral of Krull dimension $\dim B$. Then the pair $f_1 = f + y \in B[y,z]$ and $f_2 = y \in B[y]$ is admissible with respect to $B[y]$. Indeed, the assumptions on $B$ and $f$ together with the $k$-algebra isomorphisms
    $$
        B[y,z]/(f_1,f_2) \cong B[z]/(f), \quad B[y,z]/(f_1) = B[y,z]/(f + y) \cong B[z], \quad B[y]/(f_2) = B
    $$
    imply directly that $f_1$ and $f_2$ are admissible with respect to $B[y]$.
\end{example}

\subsection{The key construction}

In this part, we provide the key construction of a new admissible pair from an admissible pair. 
The idea is the following simple observation: Let $f_1 \in A[z]$ and $f_2 \in A$ be admissible with respect to a strongly $k$-rational algebra $A$ for some field $k$. Then the generic fibre of the family \eqref{eq:naive-degeneration} is the spectrum of the $k(t)$-algebra
\begin{equation}\label{eq:key-construction-intro-generic-fibre}
    A[z] \otimes_k k(t)/(t-f_1f_2) \cong A[z,y] \otimes_k k(t)/(t+f_1y,f_2+y).
\end{equation}
Note that the $k(t)$-algebras
$$
    A[z,y] \otimes_k k(t)/(t+f_1y) \cong \left(A[z] \otimes_k k(t)\right)_{f_1} \quad \text{and} \quad A[z,y] \otimes_k k(t)/(f_2+y) \cong A[z] \otimes_k k(t)
$$
are strongly $k$-rational. Thus $g_1 := t + f_1 y$ and $g_2 := f_2 + y$ are admissible with respect to $A[z,y] \otimes_k k(t)$ if the $k(t)$-algebra \eqref{eq:key-construction-intro-generic-fibre} is geometrically integral.

As integral varieties over a field $K$ with a $K$-rational point in the smooth locus are geometrically integral, see e.g.\ \cite[\href{https://stacks.math.columbia.edu/tag/0CDW}{Tag 0CDW}]{stacks-project}, we introduce the following extra condition for $f_1 \in A[z]$ and $f_2 \in A$
\begin{equation}\label{eq:condition-star}
    \forall F/k \quad \forall (q_1,q_2) \in F^2 \quad \exists \ \text{F-algebra epimorphism} \ A[z]_{\partial_z f_1} \otimes_k F/(f_1 + q_1,f_2 + q_2) \twoheadrightarrow F. \tag{$\star$}
\end{equation}

The following lemma shows that this extra condition guarantees that the $k(t)$-algebra \eqref{eq:key-construction-intro-generic-fibre} admits a $k(t)$-rational point in the smooth locus.

\begin{lemma}\label{lem:rational-pt+geom-int}
    Let $k$ be a field and let $A$ be an integral smooth $k$-algebra of finite type. Assume that $f_1 \in A[z]$ and $f_2 \in A$ are admissible with respect to $A$ in the sense of \Cref{def:admissible-pair} and satisfy condition \eqref{eq:condition-star}. Then the $k(t)$-scheme
    $$
        \Spec (A[z]_{\partial_z f_1} \otimes_k k(t))/(t-f_1f_2)
    $$
    contains a $k(t)$-rational point and is geometrically integral over $k(t)$.
\end{lemma}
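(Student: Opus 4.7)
The plan is to produce a $k(t)$-rational point by choosing $q_1,q_2 \in k(t)$ with $q_1 q_2 = t$ in condition~\eqref{eq:condition-star}, and then combine this with the integrality and smoothness already established in \Cref{prop:general-deg-strictly-semi-stable}.

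Concretely, I apply~\eqref{eq:condition-star} with $F = k(t)$, $q_1 = -t$, and $q_2 = -1$. This yields a $k(t)$-algebra epimorphism
$$
    \varphi \colon A[z]_{\partial_z f_1} \otimes_k k(t)/(f_1 - t,\, f_2 - 1) \twoheadrightarrow k(t).
$$
In the source of $\varphi$ we have $f_1 = t$ and $f_2 = 1$, so $t - f_1 f_2 = 0$ holds automatically, and the canonical projection factors as
$$
    A[z]_{\partial_z f_1} \otimes_k k(t)/(t - f_1 f_2) \twoheadrightarrow A[z]_{\partial_z f_1} \otimes_k k(t)/(f_1 - t, f_2 - 1) \overset{\varphi}{\twoheadrightarrow} k(t),
$$
yielding the desired $k(t)$-rational point. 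The key observation is that $(q_1,q_2)$ must satisfy $q_1 q_2 = t$ so that the defining relation $t - f_1 f_2$ becomes redundant in the ring to which~\eqref{eq:condition-star} is applied; otherwise the factorization would fail.

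For geometric integrality, \Cref{prop:general-deg-strictly-semi-stable} shows that $A[z] \otimes_k k(t)/(t - f_1 f_2)$ is an integral domain, so its localization $(A[z]_{\partial_z f_1} \otimes_k k(t))/(t - f_1 f_2)$ is either zero or again a domain; the nonzero surjection onto $k(t)$ just constructed rules out the former, so the scheme is integral. Moreover, the same proposition (via the standard smooth presentation with $w = 1/\partial_z f_1$) identifies this ring with a smooth $k(t)$-algebra, so the rational point above lies in the smooth locus. Applying the principle that an integral $K$-scheme with a smooth $K$-rational point is geometrically integral (as invoked by the authors immediately before~\eqref{eq:condition-star}) then completes the proof. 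No serious obstacle arises once the correct specialization is identified; every remaining ingredient is immediate from results already in the text.
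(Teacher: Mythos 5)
Your proof is correct and takes essentially the same route as the paper: where the paper passes through the auxiliary variable $y$ and applies \eqref{eq:condition-star} with $(q_1,q_2)=(t,1)$ after setting $y=1$ (so $f_1\equiv -t$, $f_2\equiv -1$ and $f_1f_2\equiv t$), you apply it with $(q_1,q_2)=(-t,-1)$ and observe directly that $t-f_1f_2\in(f_1-t,f_2-1)$, which is the same specialization up to sign. The geometric-integrality step, combining \Cref{prop:general-deg-strictly-semi-stable} with the fact that an integral variety with a rational point in the smooth locus is geometrically integral, is identical to the paper's.
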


\begin{proof}
    The existence of a $k(t)$-rational point is equivalent to showing that the $k(t)$-algebra $$
        (A[z]_{\partial_z f_1} \otimes_k k(t))/(t+f_1y,f_2 + y)
    $$
    admits a $k(t)$-algebra epimorphism to $k(t)$. The latter can be constructed as
    $$
        (A[z,y]_{\partial_z f_1} \otimes_k k(t))/(t+f_1y,f_2 + y) \twoheadrightarrow (A[z]_{\partial_z f_1} \otimes_k k(t))/(t+f_1,f_2 + 1) \twoheadrightarrow k(t),
    $$
    where the first surjection is the quotient by the ideal $(y-1)$ and the second surjection exists by condition \eqref{eq:condition-star} applied to the field extension $F = k(t)$ and the pair $(q_1,q_2) = (t,1)$.

    Thus the smooth and integral $k(t)$-variety $$
        \Spec (A[z]_{\partial_z f_1} \otimes_k k(t))/(t-f_1f_2),
    $$ 
    see \Cref{prop:general-deg-strictly-semi-stable}, is geometrically integral over $k(t)$ by \cite[\href{https://stacks.math.columbia.edu/tag/0CDW}{Tag 0CDW}]{stacks-project}.
\end{proof}

\begin{corollary}\label{cor:new-admissible-pair}
    Let $k$ be a field and let $A$ be a strongly $k$-rational algebra.
    If $f_1 \in A[z]$ and $f_2 \in A$ are admissible with respect to $A$ in the sense of \Cref{def:admissible-pair} and satisfy the condition \eqref{eq:condition-star}, then the two pairs $(h_1,h_2)$ and $(\tilde{h}_1,\tilde{h}_2)$ with
    $$
        \begin{aligned}
            &h_1 := t + f_1 y \in A[y,z] \otimes_k k(t) \quad \text{and} \quad h_2 := f_2 + y \in A[y] \otimes_k k(t) \\
            &\tilde{h}_1 := f_1 + y \in A[y,z] \otimes_k k(t) \quad \text{and} \quad \tilde{h}_2 := t + f_2 y \in A[y] \otimes_k k(t)
        \end{aligned}
    $$
    are admissible with respect to $A[y] \otimes_k k(t)$ and also satisfy \eqref{eq:condition-star}.
\end{corollary}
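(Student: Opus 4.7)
The plan is to verify, for each of the two pairs $(h_1, h_2)$ and $(\tilde h_1, \tilde h_2)$, both admissibility with respect to the base ring $B := A[y] \otimes_k k(t)$ and condition \eqref{eq:condition-star}. Setting $N + 1 := \dim A$, we first note that $B$ is strongly $k(t)$-rational of Krull dimension $N+2$ by \Cref{ex:strongly-rational-algebra}\ref{item:strongly-rational-algebra:field-ext}, so the setup of \Cref{def:admissible-pair} applies.

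For admissibility, the three required quotients will be computed via explicit substitutions. For $(h_1, h_2) = (t + f_1 y,\, f_2 + y)$, the relation $f_1 y = -t$ in $B[z]/(h_1)$ combined with $t \in k(t)^\ast$ forces $f_1$ to become a unit, so $y \mapsto -t/f_1$ yields an isomorphism $B[z]/(h_1) \cong (A[z] \otimes_k k(t))[f_1^{-1}]$, which is strongly $k(t)$-rational of Krull dimension $N+2$. Similarly, $y \mapsto -f_2$ identifies $B/(h_2)$ with $A \otimes_k k(t)$ and the joint quotient $B[z]/(h_1, h_2)$ with $(A[z] \otimes_k k(t))/(t - f_1 f_2)$. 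The latter algebra is integral of Krull dimension $N+1$ by \Cref{prop:general-deg-strictly-semi-stable}, and it is geometrically integral over $k(t)$ since it contains as a dense open subscheme the geometrically integral $\Spec(A[z]_{\partial_z f_1} \otimes_k k(t))/(t - f_1 f_2)$ provided by \Cref{lem:rational-pt+geom-int}. The analogous substitutions $y \mapsto -f_1$ and $y \mapsto -t/f_2$ handle $(\tilde h_1, \tilde h_2)$ symmetrically.

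For condition \eqref{eq:condition-star}, the strategy is to exhibit the required $F$-rational points directly by invoking \eqref{eq:condition-star} for $(f_1, f_2)$ at a carefully chosen auxiliary pair $(q'_1, q'_2)$. Given $F/k(t)$ and $(q_1, q_2) \in F^2$, for the pair $(h_1, h_2)$ we apply \eqref{eq:condition-star} for $(f_1, f_2)$ over the field $F$ (viewed as a $k$-extension via $k \hookrightarrow k(t) \subset F$) with $(q'_1, q'_2) := (-(t + q_1),\, q_2 - 1)$ to obtain an $F$-point $(a_0, z_0)$ satisfying $\partial_z f_1(a_0, z_0) \neq 0$, $f_1(a_0, z_0) = t + q_1$, and $f_2(a_0) = 1 - q_2$; then $(a_0, y_0, z_0)$ with $y_0 = -1$ lies in the smooth locus cut out by $(h_1 + q_1,\, h_2 + q_2)$ inside the localization at $\partial_z h_1 = y \,\partial_z f_1$, yielding the desired epimorphism to $F$. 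For $(\tilde h_1, \tilde h_2)$, since $\partial_z \tilde h_1 = \partial_z f_1$, we instead take $(q'_1, q'_2) := (q_1 - (t + q_2),\, -1)$ together with $y_0 = -(t + q_2)$. The main obstacle is guessing the correct ansatz for $(q'_1, q'_2)$ and $y_0$ so that substituting $y = y_0$ collapses the two defining equations of the new pair into those of the original one; once found, the verification is a direct polynomial substitution.
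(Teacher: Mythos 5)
Your proposal is correct and follows essentially the same route as the paper: identify the three quotient algebras via the substitutions $y\mapsto -t/f_1$, $y\mapsto -f_2$ (resp.\ $y\mapsto -f_1$, $y\mapsto -t/f_2$), use \Cref{prop:general-deg-strictly-semi-stable} and \Cref{lem:rational-pt+geom-int} for the (geometric) integrality of $A[z]\otimes_k k(t)/(t-f_1f_2)$, and verify \eqref{eq:condition-star} by specializing $y$ to a constant so that the new equations collapse to $(f_1+q_1',f_2+q_2')$ for a shifted pair. The only difference is the cosmetic choice of specialization ($y_0=-1$, resp.\ $y_0=-(t+q_2)$, instead of the paper's $y=1$), which is immaterial since \eqref{eq:condition-star} holds for all $(q_1',q_2')\in F^2$.
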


\begin{proof}
    We check the definition of an admissible pair: Note that $$
        A[y,z] \otimes_k k(t)/(h_1,h_2) \cong A[z] \otimes_k k(t)/(t-f_1f_2) \cong A[y,z] \otimes_k k(t)/(\tilde{h}_1,\tilde{h}_2)
    $$
    Thus the quotient is geometrically integral over $k(t)$ by \Cref{lem:rational-pt+geom-int}. 
    Recall that for $i = 1,2$ the $k(t)$-algebras
    $$
        A[z,y] \otimes_k k(t)/(t+f_i y) \cong \left(A[z] \otimes_k k(t)\right)_{f_i} \quad \text{and} \quad A[z,y] \otimes_k k(t)/(f_i+y) \cong A[z] \otimes_k k(t)
    $$
    are strongly $k(t)$-rational. Thus the pairs $(h_1,h_2)$ and $(\tilde{h}_1,\tilde{h}_2)$ are admissible with respect to $A[y] \otimes_k k(t)$.
    To show condition \eqref{eq:condition-star}, let $F/k(t)$ be a field extension and let $(q_1,q_2) \in F^2$. Then the composition
    $$
        A[y,z]_{\partial_{z} g_1} \otimes_k F/(h_1+q_1,h_2+q_2) \longrightarrow A[z]_{\partial_{z} f_1} \otimes_k F/(t + f_1 + q_1,f_2 + 1 + q_2) \xrightarrow{\eqref{eq:condition-star}} F
    $$
    yields an $F$-algebra epimorphism, where the first map is the quotient by the ideal $(y - 1)$ and the second map exists because $f_1$ and $f_2$ satisfy the condition \eqref{eq:condition-star}. Hence, the pair $(h_1,h_2)$ satisfies condition \eqref{eq:condition-star}. The same argument works for $(\tilde{h}_1,\tilde{h}_2)$.
\end{proof}

We end this part by reformulating the condition \eqref{eq:condition-star} for the construction in \Cref{ex:adding-hyperplane}.

\begin{example}\label{ex:adding-hyperplane2}
    Let $k$ be a field and let $B$ be a strongly $k$-rational algebra, e.g.\ a polynomial ring. Let $f \in B[z]$ be neither a zero-divisor nor a unit. Assume that $B[z]/(f)$ is geometrically integral and that $f$ satisfies the condition
    \begin{equation}\label{eq:condition-star2}
        \forall F/k \ \forall q \in F \ \exists \ \text{F-algebra epimorphism} \ B[z]_{\partial_z f} \otimes_k F/(f + q) \twoheadrightarrow F. \tag{$\star \star$}
    \end{equation}
    Then the pair $f_1 = f + y \in B[y,z]$ and $f_2 = y \in B[y]$ is admissible with respect to $B[y]$ by \Cref{ex:adding-hyperplane} and satisfies condition \eqref{eq:condition-star}, as
    $$
        B[y,z]_{\partial_z (f + y)} \otimes_k F/(f + y + q_1,y + q_2) \cong B[z]_{\partial_z f} \otimes_k F/(f -q_2 + q_1)
    $$
    for every field extension $F/k$ and $q_1,q_2 \in F$.
\end{example}

\subsection{Admissible pairs and torsion orders}

In this part, we apply \Cref{thm:degeneration-torsion-order} to the degeneration \eqref{eq:general-strictly-semi-stable}, which provides us a flexible way to understand the torsion order of complete intersections by understanding the torsion order of hypersurfaces.

The first theorem is a key technical result, which controls the torsion order in affine degeneration of the form \eqref{eq:general-strictly-semi-stable} and can be seen as an algebraic version of \Cref{thm:degeneration-torsion-order}.

\begin{theorem}\label{thm:induction-step}
    Let $k$ be an algebraically closed field and let $\Lambda$ be a commutative ring with $1$ and of finite characteristic $e \in \Z_{\geq 1}$ such that $e \in k^\ast$. Let $A$ be an integral smooth $k$-algebra of finite type. Assume that $f_1 \in A[z]$ and $f_2 \in A$ are admissible with respect to $A$ and satisfy condition \eqref{eq:condition-star}. Let $l \in A[z]$ be a non-trivial element and consider
    $$
        \begin{aligned}
            &W_Z := \Spec A[z]/(f_1,f_2,l \cdot \partial_z f_1) \subset Z := \Spec A[z]/(f_1,f_2) \\
            &W_{\bar{X}} := \Spec A[z] \otimes_k \overline{k(t)}/(t-f_1 f_2,l \cdot \partial_z f_1) \subset \bar{X} := \Spec A[z] \otimes_k \overline{k(t)}/(t-f_1 f_2),
        \end{aligned}
    $$
    where $\overline{k(t)}$ is an algebraic closure of a purely transcendental extension $k(t)/k$.
    Then $\bar{X}$ is an integral variety and $\Tor^\Lambda(Z,W_Z) \mid \Tor^\Lambda(\bar{X},W_{\bar{X}})$.
\end{theorem}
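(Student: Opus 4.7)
The plan is to realize $(\bar X, W_{\bar X})$ as the geometric generic fibre of an affine degeneration over $R := k[t]_{(t)}$ whose special fibre splits into two components meeting in $Z$, and then to invoke \Cref{thm:degeneration-torsion-order}. Concretely, I would take
$$
    \mathcal{X} := \Spec\,(A[z] \otimes_k R)/(t - f_1 f_2), \qquad W_\mathcal{X} := \Spec\,(A[z] \otimes_k R)/(t - f_1 f_2,\, l \cdot \partial_z f_1).
$$
Since $t - f_1 f_2$ is linear in $t$, the map $A[z] \otimes_k R \to A[z]$ sending $t \mapsto f_1 f_2$ identifies $\mathcal{X}$ with $\Spec A[z]$ as a scheme (endowed with a non-trivial $R$-structure), so $\mathcal{X}$ is an integral, flat, separated $R$-scheme of finite type. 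Its special fibre $\mathcal{X}_k = \Spec A[z]/(f_1 f_2)$ decomposes into the two irreducible components $Y_0 := \Spec A[z]/(f_1)$ and $Y_1 := \Spec A[z]/(f_2)$, whose scheme-theoretic intersection is exactly $Z = \Spec A[z]/(f_1,f_2)$ and hence integral by admissibility of $(f_1, f_2)$.

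Next I would verify the three hypotheses of \Cref{thm:degeneration-torsion-order}. Hypothesis \ref{item:thm_degeneration-torsion-order_2comp} has just been checked. For \ref{item:thm_degeneration-torsion-order_semistable}, \Cref{prop:general-deg-strictly-semi-stable} gives that $\Spec(A[z]_{\partial_z f_1} \otimes_k R)/(t-f_1f_2)$ is strictly semi-stable over $R$, and $\mathcal{X}^\circ := \mathcal{X} \setminus W_\mathcal{X}$ is obtained from it by the further localization at the non-zero element $l$; every defining condition of strict semi-stability (integrality of the total space, smoothness of the generic fibre, geometric reducedness of the special fibre, the Cartier-divisor property, and smoothness/dimension of pairwise intersections of components) is preserved by passage to a non-empty open subscheme of an integral scheme. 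For \ref{item:thm_degeneration-torsion-order_rational}, admissibility gives that $A[z]/(f_1)$ and $A/(f_2)$ are strongly $k$-rational, hence so is $A[z]/(f_2) \cong (A/(f_2))[z]$; by \Cref{rem:strongly-rational-algebra} each $Y_i$ is an open subscheme of $\aff_k^{N+1}$, and further removing $W_{Y_i}$ preserves this property.

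Finally, I would establish that the generic fibre $\mathcal{X}_K = \Spec(A[z] \otimes_k k(t))/(t-f_1f_2)$ is geometrically integral over $K := k(t)$: by \Cref{lem:rational-pt+geom-int}, which is precisely where condition \eqref{eq:condition-star} enters, the dense open $\Spec(A[z]_{\partial_z f_1} \otimes_k k(t))/(t-f_1f_2)$ is geometrically integral over $K$, and since $\mathcal{X}_K$ is integral (by \Cref{prop:general-deg-strictly-semi-stable}) and shares its function field with this open, both geometric irreducibility (equivalently, $K$ being algebraically closed in the function field) and geometric reducedness (equivalently, separability of the function field extension) lift from the open to all of $\mathcal{X}_K$. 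Consequently $\bar X = \mathcal{X}_K \times_K \overline{K}$ is an integral variety, and applying \Cref{thm:degeneration-torsion-order} yields
$$
    \Tor^\Lambda(Z, W_Z) \mid \Tor^\Lambda(\bar X, W_{\bar X}),
$$
with $W_Z = W_\mathcal{X} \cap Z$ and $W_{\bar X} = W_\mathcal{X} \cap \bar X$ as specified. The only step requiring genuine care is the geometric integrality of $\mathcal{X}_K$, where \eqref{eq:condition-star} supplies the $K$-rational point in the smooth locus needed by \Cref{lem:rational-pt+geom-int}; the remaining verifications are bookkeeping against the framework already developed in this section.
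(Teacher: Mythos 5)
Your proposal is correct and follows essentially the same route as the paper: the same degeneration $\mathcal{X} = \Spec\,(A[z]\otimes_k k[t]_{(t)})/(t-f_1f_2)$ with the same $W_{\mathcal{X}}$, with the hypotheses of \Cref{thm:degeneration-torsion-order} verified via \Cref{prop:general-deg-strictly-semi-stable}, \Cref{rem:strongly-rational-algebra} and \Cref{lem:rational-pt+geom-int}. Two harmless variations: the paper deduces geometric reducedness of the generic fibre from Cohen--Macaulayness (citing Eisenbud) rather than your function-field argument, which also works since sections of the integral generic fibre inject into its function field; and since $R = k[t]_{(t)}$, the map $t \mapsto f_1 f_2$ identifies $\mathcal{X}$ only with a localization of $\Spec A[z]$ rather than with $\Spec A[z]$ itself, which does not affect any of the properties you use.
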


\begin{proof}
    Let $R := k[t]_{(t)}$ and consider the $R$-scheme
    \begin{equation}\label{eq:induction-step-degeneration}
        \mathcal{X} := \Spec \left( A[z] \otimes_k R/(t-f_1 f_2) \right).
    \end{equation}
    Note that $Z$ is the intersection of the two irreducible components of the special fibre and $\bar{X}$ is the geometric generic fibre of $\mathcal{X}$. We aim to apply \Cref{thm:degeneration-torsion-order} to the family \eqref{eq:induction-step-degeneration}; thus we check that the assumptions in \Cref{thm:degeneration-torsion-order} are satisfied.

    The family \eqref{eq:induction-step-degeneration} is flat and separated over $R$ by \cite[Proposition 14.20 and Proposition 9.15]{GW-AG1}. It is clearly of finite type.

    The generic fibre $X$ is geometrically integral, as it is integral by \Cref{prop:general-deg-strictly-semi-stable}, Cohen-Macaulay by \cite[Proposition 18.13]{Eis95}, and contains an open (dense) subscheme, which is geometrically integral by \Cref{lem:rational-pt+geom-int}.

    Consider the closed affine subscheme $$
        W_\mathcal{X} := \Spec \left( A[z] \otimes_k R/(t-f_1 f_2,l \cdot \partial_z f_1) \right) \subset \mathcal{X}
    $$
    Note that $W_\mathcal{X} \times_{\mathcal{X}} Z = W_Z$ and $W_\mathcal{X} \times_{\mathcal{X}} \bar{X} = W_{\bar{X}}$ for $W_Z$ and $W_{\bar{X}}$ as in the statement of the theorem. The open subscheme $\mathcal{X}^\circ := \mathcal{X} \setminus W_{\mathcal{X}} \subset \mathcal{X}$ is given by
    $$
        \Spec \left(A[z] \otimes_k R/(t-f_1 f_2) \right)_{l \cdot \partial_z f_1} = \Spec A[z]_{l\cdot \partial_z f_1} \otimes_k R/(t-f_1 f_2),
    $$
    where we used that $l, f_1 \in A[z]$. 
    Thus $\mathcal{X}^\circ$ is a strictly semi-stable $R$-scheme by \Cref{prop:general-deg-strictly-semi-stable} and so condition \ref{item:thm_degeneration-torsion-order_semistable} of \Cref{thm:degeneration-torsion-order} is satisfied.
    The special fibre of the family \eqref{eq:induction-step-degeneration} consists of the two irreducible varieties $$
        Y_0 \cong \Spec A[z]/(f_1) \quad \text{and} \quad Y_1 \cong A[z]/(f_2).
    $$
    By definition of an admissible pair, the intersection $Z = Y_0 \cap Y_1 \cong \Spec A[z]/(f_1,f_2)$ is (geometrically) integral and the varieties $Y_0$ and $Y_1$ are isomorphic to an open subscheme of affine space, see \Cref{rem:strongly-rational-algebra}. 
    Hence condition \ref{item:thm_degeneration-torsion-order_2comp} and \ref{item:thm_degeneration-torsion-order_rational} hold.

    In total, we have shown that the assumptions of \Cref{thm:degeneration-torsion-order} are satisfied for the family $\mathcal{X}$ and the closed subscheme $W_{\mathcal{X}}$. Thus the theorem follows from \Cref{thm:degeneration-torsion-order}.
\end{proof}

The following theorem provides a framework to inductively construct certain affine complete intersection from affine hypersurfaces by simultaneously controlling the (relative) torsion order. As the torsion order is related to the rationality, see the discussion in the introduction, the proposition can be seen as an affine analogue and generalization of the result \cite[Theorem 7.7]{NO22}, where stable irrationality is studied.

\begin{theorem}\label{thm:add-hypers}
    Let $k$ be an uncountable algebraically closed field and let $m \geq 2$ and $n,r \geq 0$ be integers with $m \in k^\ast$.
    If there exist non-constant polynomials
    $$
        f_1,\dots,f_r \in k[x_1,\dots,x_{n+r}] \quad \text{and} \quad f \in k[x_1,\dots,x_{n+r},z]
    $$ such that
    \begin{enumerate}[label=(C\arabic*)]
        \item \label{item:thm_add-hypers:strongly-rat} the $k$-algebra $B := k[x_1,\dots,x_{n+r}]/(f_1,\dots,f_r)$ is strongly $k$-rational;
        \item \label{item:thm_add-hypers:integral} the affine scheme $\Spec B[z]/(f)$ is an integral $k$-variety of dimension $\dim B$;
        \item \label{item:thm_add-hypers:tor-order}  there exists a non-zero polynomial $l \in k[x_1,\dots,x_{n+r},z]$ such that $$
            \Tor^{\Z/m}\left(\Spec B[z]/(f),\Spec B[z]/(f, l \cdot \partial_{z} f)\right) = m;
        $$
        \item \label{item:thm_add-hypers:ratl-pt}  $f \in B[z]$ satisfies condition \labelcref{eq:condition-star2}.
    \end{enumerate}
    Then for every integers $d$ and $M$ with $d \geq M \geq 1$, there exist polynomials $$
        \check{f},\tilde{f} \in k[x_1,\dots,x_{n+r},w_1,\dots,w_M,z] \quad \text{and} \quad f_{r+1} \in k[w_1,\dots,w_M]
    $$
    of degree $\deg \check{f} = \deg f + d$, $\deg \tilde{f} = \deg f$ and $\deg f_{r+1} = d$ such that
    \begin{enumerate}[label=(\alph*)]
        \item\label{item:thm_add-hypers:add-hypers} the polynomials $f_1,\dots,f_{r+1}$ and $\tilde{f}$ satisfy the properties \labelcref{item:thm_add-hypers:strongly-rat,item:thm_add-hypers:integral,item:thm_add-hypers:ratl-pt,item:thm_add-hypers:tor-order} if additionally $d = M$ or $M \geq 2$;
        \item\label{item:thm_add-hypers:increase-deg} the polynomials $f_1,\dots,f_{r}$ and $\check{f}$ satisfy the properties \labelcref{item:thm_add-hypers:strongly-rat,item:thm_add-hypers:integral,item:thm_add-hypers:ratl-pt,item:thm_add-hypers:tor-order} and the degrees with respect to the $x,z$- and $w$-coordinates are $\deg_{x,z} \check{f} = \deg f$ and $\deg_w \check{f} = d+1$.
    \end{enumerate}
\end{theorem}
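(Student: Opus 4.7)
The plan is to construct the polynomials $\tilde f, f_{r+1}$ (for part~\ref{item:thm_add-hypers:add-hypers}) and $\check f$ (for part~\ref{item:thm_add-hypers:increase-deg}) explicitly and verify conditions~\labelcref{item:thm_add-hypers:strongly-rat,item:thm_add-hypers:integral,item:thm_add-hypers:ratl-pt,item:thm_add-hypers:tor-order} in each case. Conditions~\labelcref{item:thm_add-hypers:strongly-rat,item:thm_add-hypers:integral,item:thm_add-hypers:ratl-pt} will follow from elementary algebraic manipulations, while~\labelcref{item:thm_add-hypers:tor-order} is the main difficulty and will be reduced to the hypothesis via \Cref{thm:induction-step}.

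For part~\ref{item:thm_add-hypers:add-hypers}, I would take $\tilde f := f$ (reinterpreted in the enlarged polynomial ring that includes $w_1,\dots,w_M$) and choose $f_{r+1} \in k[w_1,\dots,w_M]$ of degree $d$ such that $k[w_1,\dots,w_M]/(f_{r+1})$ is strongly $k$-rational. Natural choices are $f_{r+1} = 1 + w_1 w_2 \cdots w_M$ when $d = M$, giving a Laurent polynomial ring as in \Cref{ex:strongly-rational-algebra}, and $f_{r+1} = 1 + w_1 w_2^{d-1}$ when $M \geq 2$. In both cases the enlarged ring $B' := B \otimes_k k[w_1,\dots,w_M]/(f_{r+1})$ is strongly $k$-rational, so~\labelcref{item:thm_add-hypers:strongly-rat} holds. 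Since $\tilde f = f$ does not involve the $w$-variables, one has $\Spec B'[z]/(\tilde f) \cong \Spec B[z]/(f) \times_k \Spec k[w]/(f_{r+1})$, a product of integral $k$-varieties over the algebraically closed field $k$, so~\labelcref{item:thm_add-hypers:integral} follows. For~\labelcref{item:thm_add-hypers:ratl-pt}, any $F$-point of the Laurent torus $\Spec k[w]/(f_{r+1})$ (which exists over any field extension $F/k$) specializes the $w$-variables and reduces the required epimorphism to condition~\labelcref{eq:condition-star2} for the original $f$.

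For part~\ref{item:thm_add-hypers:increase-deg}, no new generator is added, so $B'' := B[w_1,\dots,w_M]$ is automatically strongly $k$-rational. I would construct $\check f \in B''[z]$ of the form $\check f = f(x,z) \cdot g(w) + h(w)$ with $g, h \in k[w_1,\dots,w_M]$ chosen so that $\deg g = d$, $\deg h = d+1$, the total degrees match $\deg_{x,z}\check f = \deg f$ and $\deg_w\check f = d+1$, and the resulting hypersurface is integral and satisfies~\labelcref{eq:condition-star2}. The conditions~\labelcref{item:thm_add-hypers:strongly-rat,item:thm_add-hypers:integral,item:thm_add-hypers:ratl-pt} are checked as in part~\ref{item:thm_add-hypers:add-hypers}, using that suitable values of $w$ on a torus recover the original hypersurface equation up to a unit.

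The main obstacle in both parts is verifying~\labelcref{item:thm_add-hypers:tor-order}. The strategy is to set $l' := l \cdot q$ for a suitable nonzero polynomial $q$ in the new variables and invoke \Cref{thm:induction-step} applied to an admissible pair whose intersection recovers $\Spec B[z]/(f, l \cdot \partial_z f)$ up to a product with a strongly rational factor; such a product does not change the relative $\Z/m$-torsion order since localizing a polynomial ring over a base is compatible with the formation of the relative zero-cycle class in the sense of \Cref{def:relative-torsion-order}. This yields the divisibility $m \mid \Tor^{\Z/m}(\text{new variety}, W')$, while the reverse bound is automatic from the characteristic $m$ of $\Lambda = \Z/m$, giving equality as required. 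The delicate point, particularly in part~\ref{item:thm_add-hypers:increase-deg}, is verifying that the chosen $(g, h)$ give rise to a degeneration whose total space is regular and whose special fibre matches the hypotheses of \Cref{thm:induction-step}; this constrains the shape of $g$ and $h$ but is expected to be achievable by a direct computation once one writes out the partial derivatives $\partial_z \check f$ and the components of the degenerate fibre.
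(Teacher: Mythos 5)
Your construction for part \ref{item:thm_add-hypers:add-hypers} breaks down at the one condition that matters, namely \labelcref{item:thm_add-hypers:tor-order}. If you take $\tilde f := f$ (not involving the new variables) and $f_{r+1} \in k[w_1,\dots,w_M]$ with $k[w]/(f_{r+1})$ strongly $k$-rational, then the new variety is a product $X' \cong X \times_k T$ with $T := \Spec k[w]/(f_{r+1})$ isomorphic to a non-empty open subscheme of $\aff^{M-1}_k$. For $M \geq 2$ this kills the relative torsion order entirely: for any field $F$, homotopy invariance gives $\CH_0\bigl(X_F \times_F \aff^{M-1}_F\bigr) = 0$, and restriction to open subschemes is surjective by the localization sequence, so $\CH_0(U'_{k(X')},\Z/m) = 0$ for \emph{every} non-empty open $U' \subset X'$. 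Hence $\Tor^{\Z/m}(X',W') = 1$ for every admissible choice of $W'$, and condition \labelcref{item:thm_add-hypers:tor-order} can never hold for this $\tilde f$. Your key claim that ``a product with a strongly rational factor does not change the relative $\Z/m$-torsion order'' is exactly backwards: such a product trivializes the relative diagonal class. The same false claim is what you invoke when you try to feed the old pair, ``up to a product with a strongly rational factor,'' into \Cref{thm:induction-step}, so the reduction of \labelcref{item:thm_add-hypers:tor-order} to the hypothesis does not go through in either part.

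The paper avoids this precisely by coupling the new variables into the hypersurface: in its proof $\tilde f = f + w_1 + \dots + w_M$ and $f_{r+1}$ is built inductively (via \Cref{ex:adding-hyperplane2} and \Cref{cor:new-admissible-pair}) so that the new complete intersection is not a product but the geometric generic fibre of a strictly semi-stable degeneration over $k[t]_{(t)}$ whose double locus is the previous variety; then \Cref{thm:induction-step} transports the relative torsion order upward, a further degeneration $t \to 0$ together with \Cref{lem:torsion-order-degeneration} handles $d > M \geq 2$, and uncountability of $k$ is used to identify $\overline{k(t)}$ with $k$ at each stage (a point your sketch also leaves untouched). Part \ref{item:thm_add-hypers:increase-deg} is then deduced by eliminating one variable from the pair, which produces a polynomial of the multiplicative shape $t_2 + (f + w_1 + \dots + w_M)(\cdots)$; your ansatz $\check f = f\cdot g(w) + h(w)$ has the right shape, but without the degeneration argument (and with the incorrect product-invariance step) the verification of \labelcref{item:thm_add-hypers:tor-order} — the heart of the theorem — is missing.
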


We call $d$ the \emph{added degree} and $M$ the \emph{number of added variables}.

\begin{proof}
    The theorem is a consequence of the following claim, which we prove by applying inductively the results from \Cref{sec:affine-degenerations} together with a simple degeneration argument.
    \begin{claim}
        For all integers $d$ and $M$ with $d = M \geq 1$ or $d > M \geq 2$, there exist polynomials $$
            \tilde{f} \in k[x_1,\dots,x_{n+r},z,w_1,\dots,w_M] \quad \text{and} \quad f_{r+1} \in k[w_1,\dots,w_M]
        $$
        of degree $\deg f$ and $d$ respectively, such that $\tilde{f}$ and $f_{r+1}$ are admissible with respect to $B[w_1,\dots,w_d]$ for $B$ as in \ref{item:thm_add-hypers:strongly-rat} and satisfy condition \eqref{eq:condition-star} from \Cref{sec:affine-degenerations} as well as 
        $$
            \Tor^{\Z/m}\left(\Spec B[z,w_1,\dots,w_M]/(\tilde{f},f_{r+1}),\Spec B[z,w_1,\dots,w_M]/(\tilde{f},f_{r+1},l \cdot \partial_z \tilde{f})\right) = m
        $$
        for some $l \in k[x_1,\dots,x_{n+r},z]$. Additionally, we have $\tilde{f} - f \in k[w_1,\dots,w_M]$ of degree $1$.
    \end{claim}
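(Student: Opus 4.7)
We prove the Claim by induction on $M$. The key tools are Example~\ref{ex:adding-hyperplane2} for the base case, Corollary~\ref{cor:new-admissible-pair} with the $\tilde h$-choice (to produce new admissible pairs over $k(t)$ from pairs over $k$), Theorem~\ref{thm:induction-step} (to transfer torsion-order information from the intersection $Z$ to the associated hypersurface $\bar X$ over $\overline{k(t)}$), and the very-general specialization argument combining Lemma~\ref{lem:very-general}(i),(iii) and Lemma~\ref{lem:properties-of-torsion-order}\ref{item:lem_properties-tor-order:field-extension} (to bring the statement back over $k$).

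For the base case $M = d = 1$, set $\tilde f := f + w_1$ and $f_{r+1} := w_1$. Example~\ref{ex:adding-hyperplane2} (using \ref{item:thm_add-hypers:strongly-rat}--\ref{item:thm_add-hypers:ratl-pt} together with $k$ algebraically closed to upgrade the integrality in \ref{item:thm_add-hypers:integral} to geometric integrality of $\Spec B[z]/(f)$) yields admissibility w.r.t.\ $B[w_1]$ and condition~\eqref{eq:condition-star}. The isomorphism $\Spec B[w_1, z]/(\tilde f, f_{r+1}) \cong \Spec B[z]/(f)$ via $w_1 \mapsto 0$, combined with $\partial_z \tilde f = \partial_z f$ and \ref{item:thm_add-hypers:tor-order}, gives the required torsion order $m$ using the same $l$, and $\tilde f - f = w_1$ is linear.

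For the diagonal inductive step $(M-1,M-1) \to (M,M)$, let $(\tilde f', f_{r+1}')$ denote the inductive pair over $A' := B[w_1,\ldots,w_{M-1}]$. Corollary~\ref{cor:new-admissible-pair} with the $\tilde h$-choice yields the admissible pair $(\tilde f' + y,\ t + f_{r+1}' y)$ over $A'[y] \otimes_k k(t)$ satisfying~\eqref{eq:condition-star}, and Theorem~\ref{thm:induction-step} applied to $(\tilde f', f_{r+1}')$ over $k$ gives $m = \Tor^{\Z/m}(Z', W_{Z'}) \mid \Tor^{\Z/m}(\bar X, W_{\bar X})$ with $\bar X := \Spec A'[z] \otimes_k \overline{k(t)}/(t - \tilde f' f_{r+1}')$; this forces equality since $\Z/m$-torsion orders divide $m$. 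Since $k$ is uncountable algebraically closed, Lemma~\ref{lem:very-general}(i) provides a very general $t_0 \in k$, and Lemma~\ref{lem:properties-of-torsion-order}\ref{item:lem_properties-tor-order:field-extension} combined with the definition of very general gives $\Tor^{\Z/m}(X_{t_0}) = m$ for the specialization $X_{t_0} := \Spec A'[z]/(t_0 - \tilde f' f_{r+1}')$. Renaming $y$ as $w_M$, set $\tilde f^{(M)} := \tilde f' + w_M$ and $f_{r+1}^{(M)} := t_0 + f_{r+1}' w_M$; the isomorphism $\Spec B[w, z]/(\tilde f^{(M)}, f_{r+1}^{(M)}) \cong X_{t_0}$ via $w_M = -\tilde f'$ (together with $\partial_z \tilde f^{(M)} = \partial_z f$) transfers the torsion order using the same $l$. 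Admissibility (strong $k$-rationality of the single quotients by elimination of $w_M$, geometric integrality of the joint for very general $t_0$ via Lemma~\ref{lem:rational-pt+geom-int}), condition~\eqref{eq:condition-star}, the degrees, and the linearity $\tilde f^{(M)} - f = (\tilde f' - f) + w_M$ are then routine verifications.

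For the off-diagonal step $(d-1, M) \to (d, M)$ with $d > M \geq 2$, we run the same construction starting from the inductive $(d-1, M)$ pair, and additionally substitute the new variable $y$ by a very general linear polynomial $p := c_1 w_1 + \cdots + c_{M-1} w_{M-1} \in k[w_1,\ldots,w_{M-1}]$ (independent of $w_M$), setting $\tilde f^{(d,M)} := \tilde f' + p$ and $f_{r+1}^{(d,M)} := t_0 + f_{r+1}' p$. The inductive property that $f_{r+1}'$ is linear in $w_M$ (preserved by both the diagonal and off-diagonal steps) together with $p$'s independence from $w_M$ preserves strong $k$-rationality of $A/(f_{r+1}^{(d,M)})$; admissibility, \eqref{eq:condition-star}, the degrees ($\deg f_{r+1}^{(d,M)} = d$), and the form $\tilde f^{(d,M)} - f \in k[w_1,\ldots,w_M]$ of degree $1$ are then direct. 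The main obstacle is the preservation of $\Tor^{\Z/m} = m$ under this substitution: since $y \mapsto p$ is a closed immersion rather than a specialization in a family, the direct very-general argument of the diagonal step does not apply. The resolution is to treat the coefficients $c_1,\ldots,c_{M-1}$ as additional transcendental parameters alongside $t$ and rerun the Theorem~\ref{thm:induction-step} $+$ Lemma~\ref{lem:very-general} argument for the pair $(\tilde f' + p + u,\ t + f_{r+1}'(p + u))$ over $k(t, c_1,\ldots,c_{M-1})$ (with a new auxiliary variable $u$ playing the role of the Corollary~\ref{cor:new-admissible-pair} free variable), and then to specialize simultaneously to a very general $(t_0, c_0) \in k \times k^{M-1}$, where the torsion order remains equal to $m$.
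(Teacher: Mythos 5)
Your base case and diagonal step essentially reproduce the paper's Step 1 (the paper does not specialize $t$ to a very general $t_0\in k$ but instead keeps $t$ transcendental and identifies $\overline{k(t)}$ with $k$ by an abstract field isomorphism; your specialization route could be repaired, but note that \Cref{lem:very-general} and \Cref{def:very-general} are stated for \emph{proper} flat families, while your family is affine, and \Cref{lem:torsion-order-degeneration} alone goes in the wrong direction, so the field-theoretic "very general $=$ geometric generic" argument has to be spelled out). The genuine gap is in your off-diagonal step $(d-1,M)\to(d,M)$. You want the pair $\tilde f'+p$, $t_0+f'_{r+1}p$ in the \emph{same} $M$ variables, and you correctly identify that $y\mapsto p$ is not a specialization, but your proposed fix does not close the gap: the pair $(\tilde f'+p+u,\;t+f'_{r+1}(p+u))$ with the auxiliary variable $u$ has joint zero locus isomorphic to $\Spec A'[z]\otimes k(t,c)/(t-\tilde f'f'_{r+1})$, which lives in $M+1$ added variables and has dimension one larger than the variety you need; specializing the \emph{constants} $(t,c_1,\dots,c_{M-1})$ to very general values of $k$ does not eliminate $u$, so this argument never produces the claimed pair in $k[w_1,\dots,w_M]$, and no mechanism is given to transfer $\Tor^{\Z/m}=m$ to $\Spec B[z,w_1,\dots,w_M]/(\tilde f'+p,\,t_0+f'_{r+1}p)$. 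In addition, the admissibility you need — strong $k$-rationality of $B[w_1,\dots,w_M]/(t_0+f'_{r+1}p)$ — is asserted via "linearity in $w_M$ plus independence of $p$ from $w_M$", which is not a proof; after one or several iterated multiplicative steps $f\mapsto t'+pf$ the quotient is of the form $k[w_1,\dots,w_{M-1}][\,(t'+t_0p)/(p f'')\,]$ and it is not clear (and not shown) that this is a localization of a polynomial ring.

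For comparison, the paper avoids both problems: for $d>M\geq 2$ it takes the explicit $d=M$ pair from Step 1 and perturbs only $f_{r+1}$ \emph{additively} inside the same ring, $g_{r+1}:=f_{r+1}+t\,w_M(w_{M-1}-\delta_{M,2})h$ with $\deg h=d-2$, over $\overline{k(t)}$. Strong rationality of $A/(g_{r+1})$ and condition \eqref{eq:condition-star} are checked directly from this explicit form, geometric integrality comes from the isomorphism \eqref{eq:thm_add-hypers:Step2-iso-for-integral}, and the torsion-order statement follows from \Cref{lem:torsion-order-degeneration} applied to the degeneration $t\to 0$, whose \emph{special} fibre is the known $d=M$ example — i.e.\ the degeneration lemma is used in the direction in which it actually holds (special divides generic), followed again by the abstract identification $\overline{k(t)}\cong k$. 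Your multiplicative construction admits no analogous degeneration to the known example, so as written the claim for $d>M\geq 2$ remains unproven.
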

    Note that the claim immediately implies \ref{item:thm_add-hypers:add-hypers}, see \Cref{def:admissible-pair} and \eqref{eq:condition-star}. The explicit construction of $\tilde{f}$ and $f_{r+1}$ in the claim allows us to construct $\check{f}$ and deduce \ref{item:thm_add-hypers:increase-deg}.
    
    The proof of the claim is divided into two steps. In the first step we show the claim for $d = M \geq 1$ via the results from \Cref{sec:affine-degenerations}. The second step proves the claim for $d > M \geq 2$ by applying a simple degeneration argument together with \Cref{lem:torsion-order-degeneration}.
    
    \textbf{Step 1.} Assume that $d = M \geq 1$. We construct inductively $\tilde{f}$ and $f_{r+1}$.
    If $d = 1$, then \Cref{ex:adding-hyperplane2} applied to the strongly $k$-rational algebra $B$ and $f \in B[z]$ shows that
    $$
        \tilde{f} = f + w \in k[x_1,\dots,x_{n+r},w,z] \quad \text{and} \quad f_{r+1} = w \in k[w]
    $$
    are admissible with respect to $B[w]$ and satisfy the condition \eqref{eq:condition-star}. Note that $$
        B[w,z]/(\tilde{f},f_{r+1}) = B[z]/(f),
    $$
    which shows the claim for $d = 1$.

    Suppose that for some $d \geq 1$ there exists polynomials $\tilde{f} \in k[x_1,\dots,x_{n+r},w_1,\dots,w_{d},z]$ and $f_{r+1} \in k[w_1,\dots,w_d]$ as in the claim with $M=d$.
    Consider a purely transcendental field extension $k(t)/k$. 
    Then the polynomials $$
    \begin{aligned}
        \tilde{g} &:= \tilde{f} + w_{d+1} \in k(t)[x_1,\dots,x_{n+r},w_1,\dots,w_{d+1},z] \quad \text{and} \quad \\ g_{r+1} &:= t + w_{d+1} f_{r+1} \in k(t)[w_1,\dots,w_{d+1}]
    \end{aligned}
    $$
    are admissible with respect to $B[w_1,\dots,w_{d+1}]$ and satisfy condition \eqref{eq:condition-star} by \Cref{cor:new-admissible-pair}.
    Note that
    $$
        B' := B \otimes_k k(t)[w_1,\dots,w_{d+1},z]/(\tilde{g},g_{r+1}) \cong B \otimes_k k(t)[w_1,\dots,w_d,z]/(t - \tilde{f} f_{r+1}).
    $$
    Thus, \Cref{thm:induction-step} and the assumption on the relative torsion order in the claim imply 
    $$
        \Tor^{\Z/m}\left(\Spec B' \otimes_{k(t)} \overline{k(t)}, \Spec (B' \otimes_{k(t)} \overline{k(t)})/ (l \cdot \partial_{z} \tilde{g})\right) = m.
    $$
    Up to identifying $\overline{k(t)}$ with $k$ via a field isomorphism, which exists as both fields are algebraically closed field extensions of the same (uncountable) transcendence degree over the same prime field, we can assume that the polynomials $\tilde{g}$ and $g_{r+1}$ are defined over $k$. 
    Note that $\tilde{g}$ and $g_{r+1}$ are admissible with respect to $B[w_1,\dots,w_{d+1}]$ and satisfy condition \eqref{eq:condition-star}. We also have $\deg \tilde{g} = \deg \tilde{f}$, $\deg g_{r+1} = d+1$, and $\deg_w \tilde{g} = 1$ as well as $$
        \Tor^{\Z/m}\left(\Spec B[w_1,\dots,w_{d+1},z]/(\tilde{g},g_{r+1}),\Spec B[w_1,\dots,w_{d+1},z]/(\tilde{g},g_{r+1}, l \cdot \partial_{z} \tilde{g})\right) = m.
    $$
    Thus, we have constructed inductively polynomials as in the claim for $d = M$, which concludes the first step.

    \textbf{Step 2.}
    Assume that $2 \leq M < d$. By Step 1, there exists polynomials
    $$
        \tilde{f} \in k[x_1,\dots,x_{n+r},w_1,\dots,w_M,z] \quad \text{and} \quad f_{r+1} \in k[w_1,\dots,w_M]
    $$
    of degree $\deg \tilde{f} = \deg f$ and $\deg f_{r+1} = M$, which satisfy the conditions in the claim for $d = M$. In fact, it follows from the construction in Step 1 that we can take
    \begin{equation}\label{eq:thm_add-hypers:Step2}
        \begin{aligned}
        \tilde{f} &= f + w_1 + \dots + w_M \in k[x_1,\dots,x_M,w_1,\dots,w_M,z] \, \text{and} \\ f_{r+1} &= t_2 - w_M (t_1 - w_{M-1} g) \in k[w_1,\dots,w_M]
        \end{aligned}
    \end{equation}
    for some $t_1 \in k$ with $t_1 = 0$ if and only if $M =2$, some $t_2 \in k^\ast$ which is transcendental over the prime field of $k$, and some polynomial $g \in k[w_1,\dots,w_{M-2}]$ of degree $M-2$.
    Let $\overline{k(t)}$ be an algebraic closure of a purely transcendental field extension $k(t)/k$.
    Consider the polynomials
    \begin{equation}\label{eq:thm_add-hypers:Step2-degeneration}
        \begin{aligned}
        \tilde{g} &:= \tilde{f} \in \overline{k(t)}[x_1,\dots,x_N,y_1,\dots,y_M] \quad \text{and} \\
        g_{r+1} &:= f_{r+1} + t w_M(w_{M-1} - \delta_{M,2}) h \in \overline{k(t)}[w_1,\dots,w_M],
        \end{aligned}
    \end{equation}
    where the polynomials $\tilde{f}$ and $f_{r+1}$ are as in \eqref{eq:thm_add-hypers:Step2}, $h \in k[w_1,\dots,w_{M-1}]$ is an arbitrary polynomial of degree $d-2$, and $\delta_{M,2}$ is the Kronecker delta, i.e. $\delta_{M,2} = 1$ if $M = 2$ and $\delta_{M,2} = 0$ otherwise.

    We check that the polynomials $\tilde{g}$ and $g_{r+1}$ in \eqref{eq:thm_add-hypers:Step2-degeneration} satisfy the conditions in the claim by using some abstract field isomorphism $k \cong \overline{k(t)}$, see also Step 1.

    The admissibility of $\tilde{g}$ and $g_{r+1}$ with respect to $B \otimes_k \overline{k(t)}[w_1,\dots,w_M]$ and condition \eqref{eq:condition-star} can be shown via similar arguments as in \Cref{sec:affine-degenerations}.
    We provide some details for the convenience of the reader. 
    Identify $\overline{k(t)}$ with $\overline{k(t,t_2)}$ and consider
    $$
        A := B \otimes_k k(t,t_2)[w_1,\dots,w_M].
    $$
    Note that $A$ is strongly $k(t,t_2)$-rational, as $B$ is strongly $k$-rational by \ref{item:thm_add-hypers:strongly-rat}, see \Cref{ex:strongly-rational-algebra} \ref{item:strongly-rational-algebra:field-ext}; in particular $A$ is a smooth geometrically integral $k(t,t_2)$-algebra of finite type by \Cref{lem:strongly-rat-implies-geom-int-smooth}.
    Viewing $\tilde{g}$ and $g_{r+1}$ from \eqref{eq:thm_add-hypers:Step2-degeneration} as elements in $A[z]$, we find that
    $$
        \begin{aligned}
             A[z]/(\tilde{g}) &\cong B[w_1,\dots,w_d,z]/(\tilde{f}) \otimes_k k(t,t_2) \\
             A/(g_{r+1}) &\overset{\eqref{eq:thm_add-hypers:Step2-degeneration}}{=} A/\left(t_2 - w_M (t_1 - w_{M-1} g - t(w_{M-1} - \delta_{M,2})h)\right)
        \end{aligned}
    $$
    are strongly $k(t,t_2)$-rational by the definition of admissible (\Cref{def:admissible-pair}) for $\tilde{f}$ and $f_{r+1}$
    and \Cref{ex:strongly-rational-algebra} \ref{item:strongly-rational-algebra:1+xy}.
    The elements $\tilde{g} \in A[z]$ and $g_{r+1} \in A$ satisfy condition \eqref{eq:condition-star}, as for every field extension $F/k(t,t_2)$ and any $q_1,q_2 \in F$, we have an $F$-algebra epimorphism
    \begin{equation}\label{eq:thm_add-hypers:Step2-rationalpt-epi}
            \frac{A[z]_{\partial_{z} \tilde{g}} \otimes_{k(t,t_2)} F}{(\tilde{g} + q_1, g_{r+1} + q_2)} \longtwoheadrightarrow 
            \frac{B \otimes_k F[w_1,\dots,w_{M-2},z]_{\partial_{z} f}}{\left(f + w_1 + \dots + w_{M-2} + \delta_{M,2} + \frac{t_2 + q_2}{t_1 - \delta_{M,2}g} + q_1\right)} \overset{\ref{item:thm_add-hypers:ratl-pt}}{\longtwoheadrightarrow} F.
    \end{equation}
    The first arrow is the quotient by the ideal $(w_{M-1}-\delta_{M,2})$ together with an isomorphism of $F$-algebras using the explict description of $\tilde{g}$ and $g_{r+1}$ from \eqref{eq:thm_add-hypers:Step2-degeneration}. This uses that $t_1 - \delta_{M,2}g \in k^\ast$.
    In particular, we see that the $k(t,t_2)$-scheme
    \begin{equation}\label{eq:thm_add-hypers:Step2-opensubscheme}
        \Spec A[z]_{\partial_{z} \tilde{g}}/(\tilde{g},g_{r+1}) = \Spec \left(A/(g_{r+1})\right)[z]_{\partial_{z} \tilde{g}}/(\tilde{g})
    \end{equation}
    contains a $k(t,t_2)$-rational point and is smooth over $k(t,t_2)$, as it is a standard smooth $\Spec A/(g_{r+1})$-scheme and the $k(t,t_2)$-algebra $A/(g_{r+1})$ is strongly $k(t,t_2)$-rational by the above, see also \Cref{lem:strongly-rat-implies-geom-int-smooth}.
    The scheme \eqref{eq:thm_add-hypers:Step2-opensubscheme} is an open subscheme of $\Spec A[z]/(\tilde{g},g_{r+1})$, see e.g.\ the argument in \Cref{rem:strongly-rational-algebra}.
    Both schemes are integral $k(t,t_2)$-varieties, as $A[z]/(\tilde{g},g_{r+1})$ is an localization of the integral domain
    \begin{equation}\label{eq:thm_add-hypers:Step2-iso-for-integral}
       B \otimes_k k(t)[t_2,w_1,\dots,w_M,z]/(\tilde{g},g_{r+1})
       \overset{\eqref{eq:thm_add-hypers:Step2-degeneration}}{\cong} B \otimes_k k(t)[w_1,\dots,w_M,z]/(\tilde{f}).
    \end{equation}
    Thus, the $k(t,t_2)$-scheme \eqref{eq:thm_add-hypers:Step2-opensubscheme} is geometrically integral by \cite[\href{https://stacks.math.columbia.edu/tag/0CDW}{Tag 0CDW}]{stacks-project}.
    Since \eqref{eq:thm_add-hypers:Step2-opensubscheme} is an open subscheme of the $k(t,t_2)$-variety $\Spec A[z]/(\tilde{g},g_{r+1})$, which is also Cohen-Macaulay by \cite[Proposition 18.13]{Eis95}, the variety $\Spec A[z]/(\tilde{g},g_{r+1})$ is also geometrically integral over $k(t,t_2)$.
    Hence $\tilde{g}$ and $g_{r+1}$ are admissible with respect to $A$, in particular also with respect to $B \otimes_k \overline{k(t)}[w_1,\dots,w_M]$.

    The statement about the torsion order follows now directly from \Cref{lem:torsion-order-degeneration} via the degeneration $t \to 0$.
    Hence, the polynomials $\tilde{g}$ and $g_{r+1}$ satisfy the conditions in the claim for $d > M \geq 2$, which concludes the second step and the proof of the claim.

    We turn to the construction of $\check{f}$ and the proof of \ref{item:thm_add-hypers:increase-deg}.
    Let $d$ and $M$ be positive integers such that $d \geq M$. Then, the claim applied for the integers $d+1$ and $M+1$ shows that there exists polynomials $$
        \tilde{f} \in k[x_1,\dots,x_{n+r},w_1,\dots,w_{M+1},z] \quad \text{and} \quad f_{r+1} \in k[w_1,\dots,w_{M+1}]
    $$
    of degree $\deg f$ and $d+1$, respectively,
    such that the conditions \labelcref{item:thm_add-hypers:ratl-pt,item:thm_add-hypers:tor-order,item:thm_add-hypers:integral,item:thm_add-hypers:strongly-rat} are satisfied for $f_1,\dots,f_{r+1}$ and $\tilde{f}$. In fact, the construction in Step 2 shows that the polynomials
    $$
        \begin{aligned}
            \tilde{f} &= f + w_1 + \dots + w_{M+1} \in k[x_1,\dots,x_{n+r},w_1,\dots,w_{M+1},z] \, \text{and} \\
            f_{r+1} &= t_2 - w_{M+1} \left(t_1 - w_{M} g - t(w_{M} - \delta_{M+1,2})h\right) \in k[w_1,\dots,w_{M+1}]
        \end{aligned}
    $$
    work for some specific $t,t_2 \in k^\ast$ and $t_1 \in k$ and some polynomials $g \in k[w_1,\dots,w_{M-1}]$ and $h \in k[w_1,\dots,w_{M}]$ of degree $M-1$ and $d-1$, respectively, see \eqref{eq:thm_add-hypers:Step2} and \eqref{eq:thm_add-hypers:Step2-degeneration}.
    We note that there is a $B$-algebra isomorphism
    \begin{equation}\label{eq:thm_add-hypers:B-algebra-iso-check-f}
        B[w_1,\dots,w_{M+1},z]/(\tilde{f},f_{r+1}) \cong B[w_1,\dots,w_M,z]/(\check{f}),
    \end{equation}
    where $\check{f}$ is the degree $(d + \deg f)$ polynomial
    \begin{equation}\label{eq:thm_add-hypers:def-check-f}
        \check{f} := t_2 + (f + w_1 + \dots + w_{M}) (t_1 - w_{M} g - t(w_{M} - \delta_{M+1,2})h)
    \end{equation}
    in $k[x_1,\dots,x_{n+r},w_1,\dots,w_M,z]$.
    It is straightforward to see from the isomorphism \eqref{eq:thm_add-hypers:B-algebra-iso-check-f} and the explicit form of $\check{f}$ in \eqref{eq:thm_add-hypers:def-check-f} that \ref{item:thm_add-hypers:increase-deg} holds for $\check{f}$ as in \eqref{eq:thm_add-hypers:def-check-f}.
\end{proof}

\begin{remark}\label{rem:to-add-hypers}
    \begin{enumerate}[label=(\alph*)]
        \item The proof shows that the polynomial $l$ in \ref{item:thm_add-hypers:tor-order} for the polynomials $f_1,\dots,f_r$ and $f$ also works for $f_1,\dots,f_{r+1}$ and $\tilde{f}$ as well as for $f_1,\dots,f_r$ and $\check{f}$.
        \item \label{item:rem:add-hypers_tildef} The proof shows that the polynomial $\tilde{f}$ can be choosen as $\tilde{f} = f + h$ for some linear polynomial $h \in k[w_1,\dots,w_M]$, see the statement of the claim in the proof.
        \item \label{item:rem:add-hypers_alternative-for-M2} If $M = 2$, then we can also choose the polynomials in \eqref{eq:thm_add-hypers:Step2-degeneration} as $$
            \tilde{g} := \tilde{f} = f + w_1 + w_2 \quad \text{and} \quad g_{r+1} := t_2 - w_1w_2 + tw_1^{d}.
            $$
            Indeed, the proof uses the strongly $k(t,t_2)$-rationality of $A/(g_{r+1})$, the existence of the epimorphism \eqref{eq:thm_add-hypers:Step2-rationalpt-epi}, and the isomorphism \eqref{eq:thm_add-hypers:Step2-iso-for-integral} as well as that $g_{r+1}$ specializes to $f_{r+1}$ from \eqref{eq:thm_add-hypers:Step2} via $t \to 0$. The last two properties are obviously still satisfied for the new choice of $g_{r+1}$. We can replace the epimorphism \eqref{eq:thm_add-hypers:Step2-rationalpt-epi} by the morphism
            $$
                A[z]_{\partial_{z} \tilde{g}} \otimes_{k(t,t_2)} F/(\tilde{g} + q_1, g_{r+1} + q_2) \longtwoheadrightarrow 
            B \otimes_k F[z]_{\partial_{z} f}/\left(f + t_2 + t + q_2 + q_1 + 1 \right) \longtwoheadrightarrow F
            $$
            where the first map is the quotient by the ideal $(w_1 - 1)$ and the second map exists by \ref{item:thm_add-hypers:ratl-pt}. The strongly $k(t,t_2)$-rationality of $A/(g_{r+1})$ follows directly from
            $$
                A/(g_{r+1}) \cong B \otimes_k k(t,t_2)[w_1,w_2]/(t_2 - w_1(w_2 + tw_1^{d -1})) \cong B \otimes_k k(t,t_2)[w_1]_{w_1}.
            $$
    \end{enumerate}
\end{remark}

\section{Base examples}\label{sec:base-examples}

To apply the machinery of affine degenerations from \Cref{sec:affine-degenerations}, we need explicit equations of (rationally connected) hypersurfaces with no decomposition of the diagonal.

\subsection{Schreieder's hypersurface examples} \label{sec:hypersurface-examples}

In \cite{Sch19JAMS,Sch21-torsion}, the author constructs examples of singular hypersurfaces with non-trivial unramified cohomology classes, in particular these examples have no decomposition of the diagonal.
Starting from these explicit hypersurfaces, Schreieder and the first named author \cite{LS24} manage to increase the dimension of the hypersurface examples via a degeneration argument, see \Cref{thm:degeneration-torsion-order}.
In this subsection, we recall these constructions and show that they fit into our affine degeneration framework, leading to the following result.

\begin{theorem}\label{thm:base-examples-hypersurfaces}
    Let $k$ be an algebraically closed and uncountable field. Let $n', m \geq 2$ be integers such that $m$ is invertible in $k$. Then for every integer $N$ satisfying
    $$
        3 \leq N \leq n' + 2^{n'} - 2 + \sum_{l=0}^{n'-1}\binom{n'}{l}\left\lfloor\frac{l}{m}\right\rfloor
    $$
    and any integer $d \geq n' + m$, there exists an irreducible polynomial 
    $$
        f \in k[x_1,\ldots,x_{N},z]
    $$
    of degree $d$ satisfying condition \eqref{eq:condition-star2} and such that 
    \begin{equation}\label{eq:base-example-tor-condition}
        \Tor^{\Z/m}\left(\Spec k[x_1,\dots,x_N,z]/(f), \Spec k[x_1,\dots,x_N,z]/(f,\partial_z f)\right) = m.
    \end{equation}
    In particular, $f$ satisfies the assumption
    \labelcref{item:thm_add-hypers:strongly-rat,item:thm_add-hypers:integral,item:thm_add-hypers:ratl-pt,item:thm_add-hypers:tor-order} in \Cref{thm:add-hypers} for $n = N$, $r = 0$.
\end{theorem}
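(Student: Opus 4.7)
The idea is to take the explicit projective hypersurface examples of Schreieder \cite{Sch19JAMS,Sch21-torsion} and of Lange--Schreieder \cite{LS24}, dehomogenize them to the affine setting required here, and verify the additional condition \labelcref{eq:condition-star2} by hand from the explicit equations.

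First, I would recall the projective construction: for every admissible $N$ in the range of the theorem and every $d \geq n' + m$ the references produce an irreducible hypersurface $X = \{F = 0\} \subset \CP^{N+1}_k$ of degree $d$ together with a closed subscheme $W \subset X$ which, after a suitable choice of coordinates, is contained in $\{F = \partial_z F = 0\}$ and satisfies $\Tor^{\Z/m}(X, W) = m$. For $n' = 2$ this is essentially contained in \cite{Sch19JAMS,Sch21-torsion}; the full range of $n'$ and $N$ stated here is then obtained in \cite{LS24} by iterating \Cref{thm:degeneration-torsion-order} with two-component degenerations whose intersections are previously constructed examples.

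Next, I would pass to an affine chart: after a generic linear change of coordinates the hyperplane $\{x_0 = 0\}$ is transverse to $X$ and in particular $x_0 \nmid F$, so $f := F(1, x_1, \ldots, x_N, z) \in k[x_1, \ldots, x_N, z]$ is irreducible of degree $d$. The trace $W \cap \{x_0 \neq 0\}$ lies in $\{f = \partial_z f = 0\}$, hence \eqref{eq:base-example-tor-condition} follows from \Cref{lem:properties-of-torsion-order}\ref{item:lem_properties-tor-order:bigger-subset} once one knows the corresponding projective statement.

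The final and most delicate step is to check \labelcref{eq:condition-star2}. Given a field extension $F/k$ and $q \in F$, I need to exhibit a smooth $F$-rational point of $\{f + q = 0\} \subset \aff^{N+1}_F$, i.e., a point at which $\partial_z f$ is invertible. The explicit equations from \cite{Sch19JAMS,Sch21-torsion,LS24} are built iteratively by adding, at each degeneration step, a new variable that appears linearly in the defining polynomial. Consequently $f$ admits an ``affine linear slice'': a specialization of all but one of the $x_i$'s to specific elements of $k$ after which the restriction of $f$ is affine-linear in the remaining variable with unit leading coefficient, and on which $\partial_z f$ does not vanish identically. Solving the resulting linear equation over $F$ then produces the required $F$-rational point.

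The main obstacle is this last step: condition \labelcref{eq:condition-star2} demands an $F$-rational point for \emph{every} field extension and \emph{every} value of $q$, which is stronger than the existence of a geometric point. One has to walk through the iterative construction of \cite{LS24} and check that each two-component degeneration preserves the existence of such an affine linear slice together with the non-vanishing of $\partial_z f$. This verification is a calculation with the explicit defining equations and is the genuinely new technical input beyond what is already recorded in the cited references; everything else in the theorem is a bookkeeping repackaging of known results into the algebraic language of \Cref{sec:affine-degenerations}.
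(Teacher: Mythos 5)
There is a genuine gap at the very first step. You assert that \cite{Sch19JAMS,Sch21-torsion,LS24} already produce, for \emph{every} admissible $N$ in the range of the theorem and \emph{every} degree $d \geq n'+m$, a projective hypersurface of degree $d$ with the required relative torsion order. They do not: the explicit examples in those references have degree exactly $n+m$, where $n$ is the number of ``core'' variables, and they need at least $n+1$ affine coordinates; the double cone construction raises the dimension at fixed degree, and nothing in those references raises the degree at (essentially) fixed dimension. So in the regime where $N$ is small and $d$ is large --- e.g.\ $N=3$ or $4$ and $d$ arbitrary, which is precisely the case needed later for complete intersections of high multidegree --- no example of degree $d$ exists in the cited literature, and your dehomogenization step has nothing to start from. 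This is exactly where the paper's own machinery enters: one first takes the degree-$(n+m)$ affine example in $N-1$ variables (from \Cref{ex:Sch-hyper-example}, \Cref{lem:Sch-example-rationalpt} and the double cone construction), and then applies \Cref{thm:add-hypers}~\ref{item:thm_add-hypers:increase-deg} with one added variable and added degree $d-n-m$ to produce $\check f$ of degree $d$ in $N$ variables; the case $N=3$ requires a separate explicit polynomial together with a degeneration $\rho \to 0$ and \Cref{lem:torsion-order-degeneration}. Your proposal never invokes \Cref{thm:add-hypers}, so it cannot reach these $(N,d)$.

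A secondary problem is the direction of your appeal to \Cref{lem:properties-of-torsion-order}~\ref{item:lem_properties-tor-order:bigger-subset}: passing to the chart $\{x_0\neq 0\}$ and enlarging the removed locus to $\{f=\partial_z f=0\}$ only shows that the affine relative torsion order \emph{divides} $m$, since removing more can only kill the class $\delta$; but \eqref{eq:base-example-tor-condition} demands equality. One needs the projective input stated relative to a closed subset already containing the hyperplane at infinity --- this is how \cite[Theorem 6.1]{LS24} is actually used in the paper, where the affine equality is quoted directly for the dehomogenization $f_0$ --- not a generic-chart reduction. Finally, in your check of \eqref{eq:condition-star2} the linear-slice argument must also produce a point where $\partial_z f$ is a unit; this fails for special values of $q$ and forces the case distinction $q\in k$ (using that $k$ is algebraically closed) versus $q\notin k$ carried out in \Cref{lem:Sch-example-rationalpt}, a point your sketch glosses over, though this is minor compared to the missing degree-raising step.
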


We start with the singular hypersurface examples in \cite{Sch21-torsion}, see also the discussion in \cite[Section 6]{LS24}.

\begin{example}\label{ex:Sch-hyper-example}
    Let $k$ be an algebraically closed field containing an element $\pi \in k$, which is transcendental over the prime field of $k$.
    Fix integers $m,n \geq 2$ such that $m$ is invertible in $k$ and consider the polynomial
    \begin{equation}\label{eq:Sch-example-g-affine}
        g := \pi\cdot\left(1 + \sum_{i=1}^n x_i^{\lceil\frac{n+1}{m}\rceil}\right)^m-(-1)^n x_1x_2\dots x_n \in k[x_1,\dots,x_n].
    \end{equation}
    For an integer $N$ satisfying $n + 1 \leq N \leq n + 2^n - 2$, we consider the polynomial
    \begin{equation}\label{eq:Sch-example-f0-affine}
        f_0 := g(x_1,\dots,x_n) + \sum_{j=1}^{N - n} c_j(x_1,\dots,x_n) x_{n+j}^m + (-1)^nx_1x_2\dots x_n z^m \in k[x_1,\dots,x_N,z],
    \end{equation}
    where $g$ is as in \eqref{eq:Sch-example-g-affine} and we set $c_j := (-x_1)^{\varepsilon_1}(-x_2)^{\varepsilon_2}\dots(-x_n)^{\varepsilon_n} \in k[x_1,\dots,x_n]$ for the unique $\varepsilon_i \in \{0,1\}$ satisfying $j = \sum_{i=1}^{n} \varepsilon_i 2^{i-1}$.
    We note that $\deg f_0 = n + m$ and that $c_1(x_1,\dots,x_n) = x_1$.

    In fact, $f_0$ is the dehomogenization of the homogeneous polynomial $F$ appearing in \cite[Equation (6.3)]{LS24}. Thus, \cite[Theorem 6.1]{LS24} implies that $f_0$ is an irreducible polynomial and
    $$
        \Tor^{\Z/m}\left(\Spec k[x_1,\dots,x_N,z]/(f_0),\Spec k[x_1,\dots,x_N,z]/(f_0,\partial_z f_0)\right) = m,
    $$
    as $\partial_z f_0 = (-1)^n m x_1 \cdots x_n z^{m-1}$. 
    In particular, $f_0$ satisfies the conditions \labelcref{item:thm_add-hypers:integral,item:thm_add-hypers:strongly-rat,item:thm_add-hypers:tor-order} in \Cref{thm:add-hypers} for $n = N$ and $r = 0$.
\end{example}

The following lemma shows that $f_0$ also satisfies the assumption \ref{item:thm_add-hypers:ratl-pt}.

\begin{lemma}\label{lem:Sch-example-rationalpt}
    Let the notation be as in \Cref{ex:Sch-hyper-example}. Then for any field extension $F/k$ and any element $q \in F$, there exists an $F$-algebra epimorphism
    \begin{equation}\label{eq:Sch-example-rationalpt}
        F[x_1,\dots,x_N,z,w]/(f_0 + q, (-1)^n m x_1 \cdots x_n z^{m-1}w - 1) \longtwoheadrightarrow F.
    \end{equation}
\end{lemma}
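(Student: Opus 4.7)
The goal is to produce an $F$-rational point of $\Spec F[x_1,\dots,x_N,z,w]/(f_0 + q,\,(-1)^n m x_1\cdots x_n z^{m-1}w - 1)$, i.e., a tuple $(a_1,\dots,a_N,c)\in F^{N+1}$ with $a_1\cdots a_n c\ne 0$ and $f_0(a_1,\dots,a_N,c)=-q$. My plan begins by exploiting the algebraic identity $g = \pi P^m - (-1)^n x_1\cdots x_n$ with $P:=1+\sum_{i=1}^n x_i^{\lceil(n+1)/m\rceil}$ to rewrite
\[
 f_0 \;=\; \pi P^m \;+\; (-1)^n x_1\cdots x_n(z^m-1) \;+\; \sum_{j=1}^{N-n} c_j x_{n+j}^m .
\]
This form cleanly separates the three ``sources'' appearing in $f_0$ and, in particular, shows that setting $z=1$ eliminates the middle term.

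The core construction is to specialize $z=1$, $x_{n+j}=0$ for $j\ge 2$, and keep $x_{n+1}=Y$ free. Since $c_1=-x_1$, the equation $f_0+q=0$ collapses to $x_1 Y^m = \pi P^m + q$. The crucial idea is to choose $x_1,\dots,x_n\in F^*$ so that $P=0$; then the equation reduces to the trivial $x_1 Y^m = q$, solved (for $q\ne 0$) by picking any $Y\in F^*$ and $x_1:= q/Y^m$, and (for $q=0$) by $Y:=0$ with $x_1$ arbitrary. In the regime $\lceil(n+1)/m\rceil = 1$ (i.e.\ $m\ge n+1$), $P=0$ is a linear condition $\sum x_i = -1$, easily realized by taking $x_3=\cdots=x_n=1$ and $x_2:=1-n-x_1$; avoiding finitely many bad values of $Y$ ensures $x_1,x_2\ne 0$, completing the argument in this regime.

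The main obstacle will be the regime $\lceil(n+1)/m\rceil\ge 2$, in which $P=0$ becomes the non-linear condition $\sum x_i^{\lceil(n+1)/m\rceil}=-1$; this can fail to admit solutions in $F^*$ over an arbitrary extension (for instance $-1$ is not a sum of squares over $\mathbb{R}(\pi)$). Here I would instead keep $z$ free and exploit the fact that $\pi$ is transcendental over the prime field, so that in the decomposition $f_0 = \pi\cdot A(\mathbf{x},z) + B(\mathbf{x},z)$ the polynomials $A,B$ have coefficients in the prime field and can be manipulated independently; choosing some $x_i$ of the form $\pi\alpha$ essentially cancels one factor of $\pi$ and replaces the ``sum of $m$-th powers'' obstruction by an identity of the shape $d^m - e^m = (d-e)(d^{m-1}+\cdots+e^{m-1})$, which over any field of characteristic coprime to $m$ admits explicit $F$-rational parametrizations. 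The remaining step is a straightforward verification that the resulting point lies in the open locus $\{x_1\cdots x_n z\ne 0\}$, which reduces to avoiding finitely many coordinate hyperplanes.
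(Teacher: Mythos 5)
Your reduction (set $x_{n+j}=0$ for $j\ge 2$, $z=1$, keep $x_{n+1}=Y$, and use $c_1=-x_1$ to arrive at $x_1Y^m=\pi P^m+q$) is fine, and your treatment of the regime $\lceil (n+1)/m\rceil=1$ is correct. The gap is in the regime $\lceil (n+1)/m\rceil\ge 2$, which you only sketch. First, your diagnosis of the difficulty is off: in \Cref{ex:Sch-hyper-example} the base field $k$ is algebraically closed, and $F\supseteq k$, so ``sum of powers'' obstructions as over $\R(\pi)$ cannot occur; fields like $\R(\pi)$ are simply not admissible $F$'s. The genuine obstruction is an ordering/root-extraction problem: once $x_1$ is pinned down by $q$ (via $x_1=q/Y^m$), solving $P=0$ for the remaining variables requires extracting an $e$-th root of an element of $F$ depending on $q$, and conversely if you first solve $P=0$ with coordinates in $k^\ast$ you must extract an $m$-th root of $q/x_1$ in $F$; neither root need exist in $F$. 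For $n\ge 3$ this is easily repaired inside your own framework (take $x_2=\zeta x_1$ with $\zeta\in k^\ast$, $\zeta^{\lceil (n+1)/m\rceil}=-1$, so $P$ no longer involves $x_1$, solve $1+\sum_{i\ge 3}x_i^{\lceil (n+1)/m\rceil}=0$ in $k^\ast$ using that $k$ is algebraically closed, and then set $Y=1$, $x_1=q$ for $q\ne 0$), but for $n=2$, $m=2$ the condition $1+x_1^2+x_2^2=0$ genuinely cannot be decoupled from $x_1$, and your ``cancel a factor of $\pi$ and use $d^m-e^m=(d-e)(d^{m-1}+\dots+e^{m-1})$'' paragraph is too vague to count as a proof of this case.

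For comparison, the paper's proof avoids all root extraction in $F$ by a different dichotomy: if $q\in k$, it solves the whole system over the algebraically closed field $k$ (with $x_1=\dots=x_n=z=1$) and base changes; if $q\notin k$, it sets $x_2=\zeta x_1$ with $\zeta^{\lceil (n+1)/m\rceil}=-1$, $x_3=\dots=x_{n+1}=1$, $z=1$, so that the $P$-term becomes the constant $\pi(n-1)^m$, the monomial terms of $g$ and $(-1)^nx_1\cdots x_nz^m$ cancel, and $f_0+q$ becomes \emph{linear} in $x_1$; the resulting value of $x_1$ is nonzero precisely because $q\notin k$. You should either adopt such a case distinction (and in particular handle $q\in k$, where your value $x_1=q/Y^m$ or the constant term could vanish in awkward ways, separately) or give a complete argument for $\lceil (n+1)/m\rceil\ge 2$, especially $n=m=2$.
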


\begin{proof}
    Let $F/k$ be a field extension and let $q \in F$.
    It clearly suffices to construct the epimorphism in the case that $N = n+1$. 
    If $q \in F$ is defined over $k$, then the existence of an epimorphism \eqref{eq:Sch-example-rationalpt} is straightforward, as $k$ is algebraically closed. Indeed, there exists clearly an $k$-algebra epimorphism
    $$
        k[x_1,\dots,x_{n+1},z,w]/(f_0 + q, w \partial_z f_0 - 1) \longtwoheadrightarrow k[x_{n+1}]/(x_{n+1}^m + \pi (n+1)^m + q) \longtwoheadrightarrow k,
    $$
    where the first morphism is the quotient by the ideal $(x_1 - 1,\dots,x_n -1,z-1)$ and the second homomorphism exists as $k$ is algebraically closed.
    Since tensor product is right-exact, the $F$-algebra surjection \eqref{eq:Sch-example-rationalpt} exists for $q \in k$.

    Thus, we may assume now that $q \in F \setminus k$. Let $\zeta \in k$ be such that $\zeta^{{\lceil\frac{n+1}{m}\rceil}} = -1$, which exists as $k$ is algebraically closed.
    Then the quotient by the ideal $$
        (x_2 - \zeta x_1, x_3 -1, \dots,x_{n+1}-1,z-1) \subset F[x_1,\dots,x_{n+1},z,w]
    $$
    yields an $F$-algebra homomorphism
    $$
        F[x_1,\dots,x_{n+1},z]_{\partial_z f_0}/(f_0 + q) \longtwoheadrightarrow F[x_1]_{x_1}/(\pi(n-1)^m + x_1 + q) \cong F,
    $$
    where we used for the isomorphism that $\pi (n-1)^m + q \neq 0$, as $q \notin k$.
\end{proof}

Starting from the above examples, Schreieder and the first named author construct a series of hypersurface examples without a decomposition of the diagonal via the ``double cone construction'', see \cite{Moe23} and \cite[Section 5]{LS24}, which we briefly recall here.

\begin{example}\label{ex:double-cone-construction}
    Let $k$ be an algebraically closed field and let $N,m,d \geq 2$ be integers such that $m$ is invertible in $k$ and $2m \leq d$.
    Suppose there exists an irreducible polynomial $f \in k[x_1,\dots,x_{N},z]$ of the form
    \begin{equation}\label{eq:LS-double-cone-form}
        f := b z^m + \sum\limits_{i=0}^m a_i x_{j_0}^i
    \end{equation}
    for some index $j_0 \in \{1,\dots,N\}$ and some polynomials $b,a_0,\dots,a_m \in k[x_1,\dots,\widehat{x_{j_0}},\dots,x_{N}]$ not containing the variable $x_{j_0}$ and of degree $\deg b = d - m$, $\deg a_i \leq d - i$  such that
    $$
        \Tor^{\Z/m}\left(\Spec k[x_1,\dots,x_N,z]/(f), \Spec k[x_1,\dots,x_{N},z]/(f,\partial_z f)\right) = m.
    $$
    For example, the polynomial $f_0$ in \eqref{eq:Sch-example-f0-affine} satisfies the assumptions for $j_0 = n+1$.
    
    Let $K/k$ be an algebraically closed field extension of transcendence degree $2$ and let $\lambda,t$ denote a transcendental basis of $K/k$.
    Consider the $K$-scheme
    $$
        X := \{f(x_1,\dots,x_N,z) + w_0 + (\lambda x_{j_0} + 1) w_1 = t - w_0 w_1 = 0 \} \subset \aff^{N+3}_{K},
    $$
    where $x_1,\dots,x_N,z,w_0,w_1$ are the coordinates of $\aff^{N+3}_{K}$. Note that $X$ is as a $K$-scheme isomorphic to
    $$
    \begin{aligned}
        X &\cong \Spec K[x_1,\dots,x_N,z,w_0,w_0^{-1}]/(f(x_1,\dots,x_N,z) + w_0 + (\lambda x_{j_0} + 1) tw_0^{-1}) \\ &\cong \Spec K[x_1,\dots,x_N,z,w_0,w_0^{-1}]/\left(b z^m + \sum\limits_{i=0}^m a_i (w_0 x_{j_0} - \lambda^{-1})^i +  w_0 + t \lambda x_{j_0}\right),
    \end{aligned}
    $$
    where the second isomorphism is given by the transformation $x_{j_0} \mapsto (w_0 x_{j_0} - \lambda^{-1})$.
    
    In particular, if $\deg a_i \leq d - 2i$ for all $i \in \{0,\dots,m\}$, then $X$ is isomorphic to an open subscheme of an affine degree $d$ hypersurface in $\aff^{N+2}_K$, namely the one associated to the irreducible polynomial
    \begin{equation}\label{eq:LS-double-cone-explicit}
        \tilde{f} := b z^m + \sum\limits_{i=0}^m a_i (w_0 x_{j_0} - \lambda^{-1})^i +  w_0 + t \lambda x_{j_0} \in K[x_1,\dots,x_N,z,w_0].
    \end{equation}
    By degenerating $X$ via $t \to 0$, it follows from \cite[Proposition 5.9]{LS24} that
    $$
        \Tor^{\Z/m}\left(\Spec K[x_1,\dots,x_N,z,w_0]/(\tilde{f}), \Spec K[x_1,\dots,x_N,z,w_0]/(\tilde{f},w_0 \cdot \partial_z \tilde{f})\right) = m,
    $$
    see also \cite[Proof of Theorem 7.1]{LS24}. Note that $\tilde{f}$ is again of the form \eqref{eq:LS-double-cone-form}.

    Observe that $\tilde{f}$ automatically satisfies the condition \eqref{eq:condition-star2}. Indeed, since $b$ is a non-zero polynomial over an infinite field $k$, there exists $\alpha := (\alpha_i)_{i = 1,\dots,\hat{j_0},\dots,N} \in k^{N-1}$ such that $b(\alpha) \neq 0$. Then the quotient by the ideal
    $$
        (z-1,x_{j_0},x_i-\alpha_i : i \neq j_0)
    $$
    induces for any field extension $F/K$ and any $q \in F$ the following $F$-algebra epimorphism
    $$
        F[x_1,\dots,x_N,z,w_0,w]/(\tilde{f} + q,w \partial_z \tilde{f} - 1) \twoheadrightarrow F[w_0]/\left(f(\alpha_1,\dots,\alpha_N,1) + w_0 + q\right) \cong F,
    $$
    where we set $\alpha_{j_0} := - \lambda$.
\end{example}

Repeatedly applying the ``double cone construction'' to the polynomial $f_0$ in \eqref{eq:Sch-example-f0-affine} yields polynomials of degree $\deg f_0$ in a polynomial ring of larger Krull dimension.

\begin{remark}\label{rem:double-cone-construction}
    Let $n,m \geq 2$ be integers such that $n,m \geq 2$. Then for $N = n + 2^n -2$ the polynomial $f_0$ from \Cref{eq:Sch-example-f0-affine} reads
    \begin{equation}\label{eq:Sch-hyper-example-max}
        f_0 = g(x_1,\dots,x_n) + \sum_{j=1}^{2^n-2} c_j(x_1,\dots,x_n) x_{n+j}^m + (-1)^nx_1x_2\dots x_n z^m
    \end{equation}
    in $k[x_1,\dots,x_{n + 2^n - 2},z]$,
    where $g,c_1,\dots,c_{2^n-2} \in k[x_1,\dots,x_n]$ are some specific non-zero polynomials, see \Cref{ex:Sch-hyper-example}.
    In particular, we see that the ``double cone construction'' is applicable for $j_0 \in \{n+1,n+2,\dots,n+2^n-2\}$.
    
    A quick analysis of the change of the polynomials $a_i$ under the ``double cone construction'' $f \to \tilde{f}$ shows that we can apply the construction from \Cref{ex:double-cone-construction} in total $$
        \sum\limits_{j=1}^{2^n-2} \left\lfloor \frac{n - \deg c_j}{m} \right\rfloor = \sum\limits_{l = 0}^{n-1} \binom{n}{l} \left\lfloor \frac{l}{m} \right\rfloor
    $$
    times, see \cite[Proof of Theorem 7.1 and also Lemma 5.3]{LS24}. 
\end{remark}

    We summarize the results of the above examples (\Cref{ex:Sch-hyper-example} and \Cref{ex:double-cone-construction}) before turning to the proof of the \Cref{thm:base-examples-hypersurfaces}.
    Let $k$ be an algebraically closed and uncountable field and let $n,m \geq 2$ be integers such that $m$ is invertible in $k^\ast$.
    For every integer $N$ with $n + 1 \leq N \leq n + 2^{n} - 2$, there exists an irreducible polynomial 
    \begin{equation}\label{eq:summary-Sch-examples}
        f \in k[x_1,\dots,x_{N},z]
    \end{equation} 
    of degree $n + m$ satisfying the conditions \eqref{eq:condition-star2} and \eqref{eq:base-example-tor-condition}.

\begin{proof}[Proof of \Cref{thm:base-examples-hypersurfaces}:]
    The construction of the polynomial $f$ involves the existence of a polynomial in \eqref{eq:summary-Sch-examples} together with \Cref{thm:add-hypers} \ref{item:thm_add-hypers:increase-deg}.
    Let $N,d$ be integers satisfying $$
        d \geq n' + m \quad \text{and} \quad 4 \leq N \leq n' + 2^{n'} - 2 + \sum_{l=0}^{n'-1}\binom{n'}{l}\left\lfloor\frac{l}{m}\right\rfloor.
    $$
    Then there exists an integer $2 \leq n \leq n'$ such that $$
        n + 1 \leq N - 1 < N \leq n + 2^{n} - 2 + \sum_{l=0}^{n-1}\binom{n}{l}\left\lfloor\frac{l}{m}\right\rfloor.
    $$
    Note that $d \geq n' + m \geq n + m$. If $d = n + m$, then there exists by \eqref{eq:summary-Sch-examples} an irreducible polynomial $f \in k[x_1,\dots,x_N,z]$ of degree $d$ satisfying condition \eqref{eq:condition-star2} and \eqref{eq:base-example-tor-condition}.
    So, we can assume $d > n + m$, then there exists by \eqref{eq:summary-Sch-examples} an irreducible polynomial $\tilde{f} \in k[x_1,\dots,x_{N-1},z]$ of degree $n + m$ satisfying condition \eqref{eq:condition-star2} and such that $$
        \Tor^{\Z/m}\left(\Spec k[x_1,\dots,x_{N-1},z]/(\tilde{f}),\Spec k[x_1,\dots,x_{N-1},z]/(\tilde{f},\partial_z \tilde{f})\right) = m.
    $$
    Note that this uses \Cref{lem:properties-of-torsion-order} \labelcref{item:lem_properties-tor-order:bigger-subset,item:lem_properties-tor-order:field-extension}. Applying \Cref{thm:add-hypers} \ref{item:thm_add-hypers:increase-deg} to the polynomial $\tilde{f}$ with $r = 0$ and added degree $d-n-m$ and $1$ added variable, there exists an irreducible polynomial $f \in k[x_1,\dots,x_N,z]$, denoted by $\check{f}$ in \Cref{thm:add-hypers}, of degree $d = n + m + (d - n - m)$ satisfying condition \eqref{eq:condition-star2} and such that $$
        \Tor^{\Z/m}\left(\Spec k[x_1,\dots,x_{N},z]/(f),\Spec k[x_1,\dots,x_{N},z]/(f,\partial_z f)\right) = m,
    $$
    i.e.~the polynomial $f$ satisfies the claims in the theorem.

    It remains to show the theorem for $N = 3$. Since $k$ is an uncountable field, there exists $\pi,\rho \in k$ which are algebraically independent over the prime field of $k$. We claim that for any $d \geq 2 + m$ the polynomial
    $$
        f := \rho x_3^d + \pi \left(1 + x_1^{\lceil \frac{3}{m} \rceil} + x_2^{\lceil \frac{3}{m} \rceil}\right)^m - x_1x_2 + x_1 x_3^m + x_1 x_2 z^m \in k[x_1,x_2,x_3,z]
    $$
    satisfies the properties stated in the theorem. Indeed, a direct computation shows that if $f \in k[x_1,x_2,x_3][z]$ would be reducible, then it has to be divisible by $x_1 x_2$, which it is clearly not. Thus $f$ is irreducible. 
    We note that $f$ degenerates via $\rho \to 0$ to a polynomial $f_0$ of the form \eqref{eq:Sch-example-f0-affine}, thus it follows from \Cref{lem:torsion-order-degeneration} that $$
        \Tor^{\Z/m}\left(\Spec k[x_1,x_2,x_3,z]/(f),\Spec k[x_1,x_2,x_3,z]/(f,\partial_z f)\right) = m,
    $$
    see also \cite[Corollary 6.2]{LS24}.
    The condition \eqref{eq:condition-star2} follows by a similar argument as in \Cref{lem:Sch-example-rationalpt}, which shows the claim and thus the theorem.
\end{proof}

\subsection{A special quartic fourfold}\label{sec:HPT-quartic}

We provide three examples of affine complete intersections, namely a quartic fourfold, a $(2,2,2)$-fourfold, and a $(3,3)$-fivefold, which also fit in our framework of affine degenerations from \Cref{sec:affine-degenerations}, which are used in the proof of \Cref{thm:intro-ci-2-torsion} for the case of small Fano index $r \leq 2$, see also \Cref{prop:aff-ci-low-index}.
These are based on a special quartic fourfold example from \cite[Example 4.6]{LS24}, which is birational to the quadric surface bundle example in \cite{HPT18}.
Throughout this section, let $k$ be an algebraically closed field of characteristic different from $2$.

\begin{example}\label{ex:HPT-quartic}
    Consider the irreducible polynomial $$
        h := y_1^2 + x_1x_2 y_2^2 + x_2 y_3^2 + x_1(1 + x_1^2 + x_2^2 - 2x_1 - 2x_2 - 2x_1x_2) \in k[x_1,x_2,y_1,y_2,y_3].
    $$
    The argument sketched in \cite[Example 4.6]{LS24} shows that the $\Z/2$-torsion order of the affine scheme $$
        X := \Spec k[x_1,x_2,y_1,y_2,y_3]/(h)
    $$
    relative to the closed subscheme $W := \{x_1x_2y_1 = 0\} \subset X$ is equal to $2$. In fact, the argument presented there can be adapted to show that \begin{equation*}
        \Tor^{\Z/2}\left(\Spec k[x_1,x_2,y_1,y_2,y_3]/(h),\Spec k[x_1,x_2,y_1,y_2,y_3]/(h,x_1x_2y_1y_2y_3)\right) = 2.
    \end{equation*}
    We would like to emphasize that this relies on some vanishing results for unramified cohomology classes, for instance \cite[Theorem 9.2]{Sch19JAMS}.
    
    By replacing $y_1 = x_1 z_1$, $y_2 = z_2$, and $y_3 = x_1 z_3$ and dividing the resulting polynomial by $x_1$, we find that $X$ is birational to the affine hypersurface associated to the irreducible polynomial
    \begin{equation}\label{eq:HPT-quartic}
        f := x_1 z_1^2 + x_2 z_2^2 + x_1 x_2 z_3^2 + (1 + x_1^2 + x_2^2 - 2x_1 - 2x_2 - 2x_1x_2) \in k[x_1,x_2,z_1,z_2,z_3].
    \end{equation}
    In particular, we see from the above explicit change of coordinates that
    \begin{equation}\label{eq:torsion-order-HPT-quartic}
        \Tor^{\Z/2}\left(\Spec k[x_1,x_2,z_1,z_2,z_3]/(f),\Spec k[x_1,x_2,z_1,z_2,z_3]/(f,x_1x_2z_1z_2z_3)\right) = 2.
    \end{equation}
\end{example}

To see that the polynomial $f$ fits in our framework of affine degeneration, we check the condition \eqref{eq:condition-star2} by a similar argument as in the proof of \Cref{lem:Sch-example-rationalpt}.

\begin{lemma}\label{lem:HPT-quartic-rational-pt}
    For any field extension $F/k$ and any $q \in F$, there exists an $F$-algebra epimorphism
    $$
        F[x_1,x_2,z_1,z_2,z_3,w]/(f+q,w\partial_{z_3}f-1) \longtwoheadrightarrow F,
    $$
    where $f \in k[x_1,x_2,z_1,z_2,z_3]$ is the polynomial in \eqref{eq:HPT-quartic}.
\end{lemma}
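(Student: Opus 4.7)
My plan is to exhibit an explicit $F$-rational point of
$$
    \Spec F[x_1,x_2,z_1,z_2,z_3,w]/(f+q,w\partial_{z_3}f-1),
$$
as an $F$-rational point corresponds precisely to an $F$-algebra surjection to $F$ given by the quotient by the associated maximal ideal. Since $\partial_{z_3}f = 2x_1x_2z_3$, the auxiliary variable $w$ causes no trouble: once we fix $x_1,x_2,z_3 \in F^\ast$ (recall $\mathrm{char}(k)\neq 2$), we may simply take $w = 1/(2x_1x_2z_3) \in F$. Hence the real task is to find $(x_1,x_2,z_1,z_2,z_3) \in F^5$ with $x_1 x_2 z_3 \neq 0$ and $f + q = 0$.

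The main subtlety is that for most naive specializations (e.g.\ $x_1=x_2=z_3=1$ together with $z_1=z_2=0$) the equation $f+q=0$ forces the extraction of a square root of an expression depending on $q$, which need not exist in an arbitrary extension $F/k$. To avoid this, I plan to specialize only \emph{three} of the five coordinates, leaving two free to absorb the parameter $q$. Setting $x_1 = x_2 = z_3 = 1$, the equation $f + q = 0$ simplifies to
$$
    z_1^2 + z_2^2 = 2 - q,
$$
which is an affine conic in $(z_1,z_2)$ over $F$.

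The key observation is that because $k$ is algebraically closed with $\mathrm{char}(k)\neq 2$, there exists $i \in k$ with $i^2=-1$, so we have the factorization $z_1^2+z_2^2=(z_1+iz_2)(z_1-iz_2)$. Thus $z_1^2+z_2^2$ is a universal binary quadratic form over $F$: for any value $c \in F$, setting $z_1+iz_2 = 1$ and $z_1-iz_2 = c$ yields $z_1 = (1+c)/2$ and $z_2 = i(1-c)/2$ in $F$ with $z_1^2+z_2^2=c$. Applying this with $c = 2-q$ gives the candidate $F$-point
$$
    (x_1,x_2,z_1,z_2,z_3,w) = \left(1,\ 1,\ \tfrac{3-q}{2},\ \tfrac{i(1-q)}{2},\ 1,\ \tfrac{1}{2}\right).
$$
The plan is then to verify by direct substitution that $f+q$ vanishes at this point (this is a routine and short computation using the factorization above) and that $x_1x_2z_3 = 1$ is a unit, so that $w\partial_{z_3}f - 1 = 0$ as well. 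This completes the construction of the required $F$-algebra epimorphism; no serious obstacle arises once the correct specialization — one that leverages the splitting of $z_1^2 + z_2^2$ over $k$ — has been chosen.
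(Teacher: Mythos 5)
Your proposal is correct: at the specialization $x_1=x_2=z_3=1$ one has $f+q=z_1^2+z_2^2-(2-q)$ and $\partial_{z_3}f=2$, and your point $\bigl(1,1,\tfrac{3-q}{2},\tfrac{i(1-q)}{2},1,\tfrac12\bigr)$ indeed satisfies both equations (note $(3-q)^2-(1-q)^2=4(2-q)$; the sign of $z_2$ relative to your labelling of the two linear factors is immaterial since only $z_2^2$ appears in $f$), and all coordinates lie in $F$ because $i,\tfrac12\in k$ by the standing assumptions that $k$ is algebraically closed of characteristic $\neq 2$. The route differs from the paper's in one structural point: the paper argues by cases, treating $q\in k$ via the same specialization $x_1=x_2=z_3=1$ and algebraic closedness of $k$ (then base changing to $F$), and treating $q\notin k$ by a second specialization $(x_1+x_2-1,z_1-1,z_2,z_3-2)$ in which $f+q$ becomes \emph{linear} in $x_1$, so that $x_1=-q$ solves it and $q\notin k$ guarantees the derivative stays a unit. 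Your observation that $z_1^2+z_2^2$ is hyperbolic, hence universal, over $F$ eliminates the case distinction and gives a single uniform formula, which is shorter; the paper's linear-in-$x_1$ trick is the more robust device (it does not need $-1$ to be a square and reappears in analogous verifications elsewhere in the paper, e.g.\ \Cref{lem:Sch-example-rationalpt}), but for this particular lemma your argument is complete and arguably cleaner.
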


\begin{proof}
    Let $F/k$ be a field extension and let $q \in F$. If $q$ is defined over $k$, then there is an $k$-algebra epimorphism
    $$
        k[x_1,x_2,z_1,z_2,z_3,w]/(f+q,2 w x_1x_2z_3 - 1) \longtwoheadrightarrow k[z_1]/(z_1^2 + z_2^2 -2 + q) \longtwoheadrightarrow k,
    $$
    where the first morphism is the quotient by the ideal $(x_1 - 1,x_2 -1,z_3-1)$ and the second morphism exists as $k$ is algebraically closed. Thus the lemma follows for $q \in k \subset F$ by the right-exactness of the tensor product.
    Thus, we may assume $q \in F \setminus k$. 
    Then, the quotient by the ideal $(x_1 + x_2 -1,z_1-1,z_2,z_3-2)$ yields the claimed $F$-algebra epimorphism
    $$
        F[x_1,x_2,z_1,z_2,z_3,w]/(f+q,2wx_1x_2z_3-1) \longtwoheadrightarrow F[x_1,w]/(x_1 + q,4wx_1(x_1-1) + 1) \cong F,
    $$
    where we used that $q \neq 0,1$.
\end{proof}

We recall the explicit birational equivalence of the (affine) quartic fourfold in \Cref{ex:HPT-quartic} with a $(2,2,2)$-complete intersection from \cite[\S 3.1]{HPT18-3quadrics}, which is based on a construction of Beauville \cite{bea77}.

\begin{example}\label{ex:HPT-3quadrics-birational}
    Consider the polynomials \begin{equation}\label{eq:HPT-3quadrics}
        p_1 := -y_1 + z_2^2 + z_3 z_4 - 2, \quad p_2 := y_1 + y_2z_3 + z_1^2  - 2, \quad
            p_3 := y_1 z_5 - y_2 z_4 + 1 + z_5^2
    \end{equation}
    in $k[y_1,y_2,z_1,z_2,z_3,z_4,z_5]$, see \cite[Equation (3.3)]{HPT18-3quadrics}. 
    Then there exists an isomorphism of $k$-algebras
    $$
        k[y_1,y_2,z_1,z_2,z_3,z_4,z_5]_{z_3}/(p_1,p_2,p_3) \cong k[x_1,x_2,z_1,z_2,z_3]_{z_3}/(f), 
    $$
    where $f \in k[x_1,x_2,z_1,z_2,z_3]$ is as in \eqref{eq:HPT-quartic}. In particular, the affine $(2,2,2)$-complete intersection $\{p_1 = p_2 = p_3 = 0\} \subset \aff^7_k$ is birational to the quartic fourfold $\{f = 0\} \subset \aff^5_k$.
    Indeed, the following sequence of isomorphism provides the claimed isomorphism:
    $$
        \begin{aligned}
            &\hphantom{=}\ k[y_1,y_2,z_1,z_2,z_3,z_4,z_5]_{z_3}/( -y_1 + z_2^2 + z_3 z_4 - 2,y_1 + y_2z_3 + z_1^2  - 2,y_1 z_5 - y_2 z_4 + 1 + z_5^2) \\  
            &\cong k[z_1,z_2,z_3,z_4,z_5]_{z_3}/\left((z_2^2 + z_3 z_4 - 2) z_5 + (z_2^2 + z_3 z_4 + z_1^2 -4)z_3^{-1} z_4 + 1 + z_5^2\right) \\
            &= k[z_1,z_2,z_3,z_4,z_5]_{z_3}/\left(z_3^{-1} z_4 z_1^2 + (z_5 + z_3^{-1}z_4)z_2^2 + z_3 z_4 z_5 + z_4^2 + 1 + z_5^2 - 2 z_5 - 4 z_3^{-1} z_4\right) \\
            &\cong k[x_1,x_2,z_1,\dots,z_5]_{z_3}/\left(x_1 - z_3^{-1}z_4, x_2 - x_1 - z_5,x_1 z_1^2 + x_2z_2^2 + x_2z_3z_4 + (z_5-1)^2 - 4 x_1 \right) \\
            &\cong k[x_1,x_2,z_1,z_2,z_3]_{z_3}/\left(x_1 z_1^2 + x_2z_2^2 + x_1x_2z_3^2 + (x_2 -x_1 -1)^2 - 4 x_1 \right).
        \end{aligned}
    $$
\end{example}
    We could not verify the condition \ref{item:thm_add-hypers:strongly-rat} for any two of the three $p_i$'s. It seems that the polynomials $p_1,p_2,p_3$ in \eqref{eq:HPT-3quadrics} do not fit directly in the setup of \Cref{thm:add-hypers}.
    Instead, we consider a different affine chart of the projective closure of $\{p_1 = p_2 = p_3= 0 \} \subset \aff^7$.

    \begin{example}\label{ex:HPT-3quadrics-correct-chart}
        Consider the $k$-algebra $k[y_1,y_2,z_1,z_2,z_3,z_4,z_5]$ and the polynomials $p_1,p_2,p_3$ from \eqref{eq:HPT-3quadrics}. We perform the following change of coordinates
        $$
            (x_1,\dots,x_5,x_6,x_7) := (y_2 z_4^{-1},z_1 z_4^{-1},z_2 z_4^{-1}, z_3 z_4^{-1}, z_4^{-1},y_1 z_4^{-1},z_5 z_4^{-1}).
        $$
        Then the quadric polynomials $p_1,p_2,p_3$ from \eqref{eq:HPT-3quadrics} read
        \begin{equation}\label{eq:HPT-3quadrics-correct-chart}
        \begin{aligned}
            q_1 &:= -x_6x_5 + x_3^2 + x_4 - 2x_5^2 \in k[x_1,\dots,x_7], \\
            q_2 &:= x_6x_5 + x_1x_4 + x_2^2 - 2x_5^2 \in k[x_1,\dots,x_7], \\
            q_3 &:= x_6 x_7 - x_1 + x_5^2 + x_7^2 \in k[x_1,\dots,x_7].
        \end{aligned}
        \end{equation}
    \end{example}

    The above constructions imply that the affine $(2,2,2)$-complete intersection given by $q_1,q_2,q_3$ satisfies the assumptions in \Cref{thm:add-hypers} for $m = 2$.

\begin{corollary}\label{cor:HPT-3quadrics}
    Let $q_1,q_2,q_3 \in k[x_1,\dots,x_7]$ be as in \eqref{eq:HPT-3quadrics-correct-chart}. Then the following holds:
    \begin{enumerate}[label=(\roman*)]
        \item The $k$-algebra $ k[x_1,x_3,x_4,x_5,x_6,x_7]/(q_1,q_3) \cong k[x_3,x_5,x_6,x_7]$ is strongly $k$-rational;
        \item $X := \Spec k[x_1,\dots,x_7]/(q_1,q_2,q_3)$ is an integral $k$-variety of dimension $4$.
        \item Set $W := \{x_2 x_4 x_5 = 0\} \subset X$, then $\Tor^{\Z/2}(X,W) = 2$;
        \item For any field extension $F/k$ and $q \in F$, there exists an $F$-algebra epimorphism
        $$
            F[x_1,\dots,x_7,w]/(q_1,q_2 + q,q_3,w \partial_{x_2} q_2 - 1) \longtwoheadrightarrow F. 
        $$
    \end{enumerate}
\end{corollary}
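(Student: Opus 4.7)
The plan is to combine direct algebra for \emph{(i)} and \emph{(ii)} with the birational equivalence between $X$ and the quartic fourfold $\{f=0\}$ of \Cref{ex:HPT-quartic} (provided by \Cref{ex:HPT-3quadrics-birational,ex:HPT-3quadrics-correct-chart}); the heart of the argument is \emph{(iii)}, which transfers the relative torsion order \eqref{eq:torsion-order-HPT-quartic} along this birational map. Part \emph{(iv)} is by explicit construction.

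For \emph{(i)}, both $q_1$ and $q_3$ are linear with a unit coefficient in $x_4$ respectively $x_1$, so solving yields the claimed isomorphism with $k[x_3,x_5,x_6,x_7]$. For \emph{(ii)}, part (i) presents $X$ as a hypersurface in $\aff^5_k = \Spec k[x_2,x_3,x_5,x_6,x_7]$ cut out by a polynomial $\tilde{q}_2$. A direct computation shows $p_i = q_i/x_5^2$ under the change of coordinates of \Cref{ex:HPT-3quadrics-correct-chart}, so $X \cap \{x_5\neq 0\}$ is isomorphic to $\{p_1=p_2=p_3=0\} \cap \{z_4\neq 0\}$; further intersecting with $\{x_4 \neq 0\}$ (equivalent to $\{z_3\neq 0\}$, since $x_4 = z_3 z_4^{-1}$) and applying \Cref{ex:HPT-3quadrics-birational}, we identify $X \cap \{x_4 x_5\neq 0\}$ with the open subscheme $\{f=0\} \cap \{x_1\neq 0, z_3 \neq 0\}$ of the integral quartic fourfold (here $x_1$ denotes the variable of \Cref{ex:HPT-quartic}, which equals $z_3^{-1} z_4$). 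This provides an integral dense open subscheme of $X$; that $X$ itself is integral then follows from the hypersurface presentation, as any nontrivial factorization of $\tilde{q}_2$ in $k[x_2,x_3,x_5,x_6,x_7]$ would also factor the prime ideal it generates in the localization at $x_5$, forcing one factor to be a monomial $c x_5^n$, which is excluded by checking that $\tilde{q}_2|_{x_5=0} \neq 0$.

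Part \emph{(iii)} is the main technical step. The variable $x_2$ on the complete-intersection side satisfies $x_2 = z_1 z_4^{-1}$, so on the common open it vanishes iff $z_1$ does, and the isomorphism from (ii) thus restricts to
\[ X \setminus W \;=\; X \cap \{x_2 x_4 x_5 \neq 0\} \;\cong\; \{f=0\} \cap \{x_1 z_1 z_3 \neq 0\} \;=\; \{f=0\} \setminus W', \]
where $W' := \{x_1 z_1 z_3 = 0\} \cap \{f=0\}$ and $x_1$ again denotes the variable of \Cref{ex:HPT-quartic}. The relative torsion order from \Cref{def:relative-torsion-order} is the order of $\delta_U \in \CH_0(U_{k(U)},\Lambda)$, which depends only on the open complement $U$, so $\Tor^{\Z/2}(X,W) = \Tor^{\Z/2}(\{f=0\},W')$. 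Since $W' \subset W_f := \{x_1 x_2 z_1 z_2 z_3 = 0\}\cap\{f=0\}$, \Cref{lem:properties-of-torsion-order}\,(2) gives $\Tor^{\Z/2}(\{f=0\},W_f) \mid \Tor^{\Z/2}(\{f=0\},W')$; combining with \eqref{eq:torsion-order-HPT-quartic} and the observation that $\Lambda = \Z/2$ forces the torsion order to divide $2$, we conclude $\Tor^{\Z/2}(X,W) = 2$. The crucial point is that $W$ translates to a \emph{strictly smaller} closed subset on the quartic side, not a larger one, so the divisibility from \Cref{lem:properties-of-torsion-order}\,(2) runs in the useful direction.

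For \emph{(iv)}, the assignment $x_1,x_2,x_3,x_4,x_5,x_6,x_7,w \mapsto 1,1,0,(3-q)/2,1,-(q+1)/2,0,1/2$ defines an $F$-algebra homomorphism $F[x_1,\dots,x_7,w]\to F$; a direct verification using $\charac k \neq 2$ shows $q_1,q_3 \mapsto 0$, $q_2 \mapsto -q$, and $2wx_2 - 1 \mapsto 0$, yielding the desired epimorphism.
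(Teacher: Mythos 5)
Parts (i), (iii) and (iv) are correct and follow essentially the paper's own route. For (iii), the paper simply writes down the composite isomorphism you assemble from \Cref{ex:HPT-3quadrics-birational} and \Cref{ex:HPT-3quadrics-correct-chart}, namely $k[x_1,x_2,z_1,z_2,z_3]_{x_1z_3}/(f)\cong k[x_1,\dots,x_7]_{x_4x_5}/(q_1,q_2,q_3)$, under which $X\setminus W$ is identified with $\{f=0\}\cap\{x_1z_1z_3\neq 0\}$; your observation that the relative torsion order only depends on the open complement, combined with \Cref{lem:properties-of-torsion-order}\,(2) applied to $\{x_1z_1z_3=0\}\subset\{x_1x_2z_1z_2z_3=0\}$ and the fact that $\Z/2$-torsion orders divide $2$, is exactly the (compressed) argument intended by ``via this isomorphism (iii) follows from \eqref{eq:torsion-order-HPT-quartic}''. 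Your explicit point in (iv) verifies correctly (the paper uses a different point, obtained after eliminating $x_1,x_4$); both work.

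The one genuine gap is in your argument for (ii). After eliminating $x_1$ and $x_4$ via $q_3$ and $q_1$, write $X=\{\tilde q_2=0\}\subset\aff^5$ with $\tilde q_2=x_6x_5+(x_5^2+x_7^2+x_6x_7)\,\tilde x_4+x_2^2-2x_5^2$, where $\tilde x_4:=x_6x_5-x_3^2+2x_5^2$ is the image of $x_4$. What your identification with the quartic of \Cref{ex:HPT-quartic} actually gives is integrality of $X\cap\{x_4x_5\neq0\}$, i.e.\ primality of $(\tilde q_2)$ in the localization at $x_5\tilde x_4$ --- not in the localization at $x_5$ alone, as you assert (integrality of $\{p_1=p_2=p_3=0\}\cap\{z_4\neq0\}$ is not available to you without citing the very integrality statement being proved). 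Since $x_5\nmid\tilde q_2$, primality of $(\tilde q_2)$ in $k[\dots]_{x_5}$ is essentially equivalent to the irreducibility you want, so as written the step is circular, and the conclusion ``one factor is $cx_5^n$'' does not follow. The fix is short: in a nontrivial factorization one factor becomes a unit in the localization at $x_5\tilde x_4$, hence (the polynomial ring being a UFD and $x_5$, $\tilde x_4$ irreducible) is, up to scalar, $x_5^a\tilde x_4^b$ with $a+b\geq1$; this is excluded because $\tilde q_2|_{x_5=0}=x_2^2-x_3^2x_7^2-x_3^2x_6x_7\neq0$ and $\tilde q_2\equiv x_2^2+x_3^2-4x_5^2\not\equiv0 \pmod{\tilde x_4}$. (One should also note the inverted locus is non-empty, so the localized quotient is a non-zero domain, and that ``integral dense open'' presupposes what is being proved and is not needed.) Alternatively, (ii) is exactly the point where the paper instead quotes \cite{HPT18-3quadrics}.
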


\begin{proof}
    Statement (i) is obvious and statement (ii) follows from \cite{HPT18-3quadrics}. 
    The isomorphisms in \Cref{ex:HPT-3quadrics-birational} and the change of coordinates in \Cref{ex:HPT-3quadrics-correct-chart} imply that
    $$
        \begin{aligned}
        k[x_1,x_2,z_1,z_2,z_3]_{x_1z_3}/(f) & \longrightarrow k[x_1,\dots,x_7]_{x_4x_5}/(q_1,q_2,q_3), \\
        (x_1,x_2,z_1,z_2,z_3) &\longmapsto (x_4^{-1},x_7x_5^{-1} + x_4^{-1},x_2x_5^{-1},x_3x_5^{-1},x_4x_5^{-1})
        \end{aligned}
    $$
    is an isomorphism of $k$-algebras, where $f$ is as in \eqref{eq:HPT-quartic}. Via this isomorphism statement (iii) follows from \eqref{eq:torsion-order-HPT-quartic}.
    We turn to the proof of statement (iv). Let $F/k$ be a field extension and let $q \in F$. Note first $F[x_1\dots,x_7]/(q_1,q_2 + q,q_3)$ is equal to
    $$ 
            F[x_2,x_3,x_5,x_6,x_7]/(x_6x_5 + (x_5^2 + x_7^2 + x_6 x_7)(2x_5^2 + x_6 x_5 - x_3^2) + x_2^2 - 2x_5^2 +q).
    $$
    Thus the quotient by the ideal $(x_2-1,x_3 -1,x_5,x_7-1)$ induces the $F$-algebra epimorphism
    $$
        F[x_1,\dots,x_7,w]/(q_1,q_2 + q,q_3,2x_2 w - 1) \longtwoheadrightarrow F[x_6]/(-x_6 +  q) \cong F,
    $$
    which shows statement (iv) and thus completes the proof of the corollary.
\end{proof}

We conclude this section by constructing an affine $(3,3)$-fivefold whose relative $\Z/2$-torsion order is divisible by $2$. By applying the results from \Cref{sec:affine-degenerations} to an affine $(2,3)$-complete intersection isomorphic to the affine quartic fourfold in \Cref{ex:HPT-quartic}.

\begin{example}\label{ex:33-5fold}
From the explicit form of the polynomial $f$ in \eqref{eq:HPT-quartic}, we see that there exists an $k$-algebra isomorphism
$$
    k[x_1,x_2,z_1,z_2,z_3]/(f) \cong k[x_1,x_2,x_3,z_1,z_2,z_3]/(f_1,f_2)
$$
where $f_1,f_2 \in k[x_1,x_2,x_3,z_1,z_2,z_3]$ are the polynomials \begin{equation}\label{eq:23-ci}
        f_1 := x_3 - z_3^2, \quad f_2 := x_1 z_1^2 + x_2 z_2^2 + x_1x_2 x_3 + (1 + x_1^2 + x_2^2 - 2x_1 - 2x_2 - 2x_1x_2).
\end{equation}

Note that the $k$-algebra $B := k[x_1,x_2,x_3,z_1,z_2]_{x_1x_2}/(f_2)$ is strongly $k$-rational and that $B[z_3]/(f_1)$ is a geometrically integral $k$-algebra of Krull dimension $4$, as it is a localization of a geometrically integral $k$-algebra. Moreover, we have for any field extension $F/k$ and any element $q \in F$ an $F$-algebra epimorphism
\begin{equation}\label{eq:23-ci-rational-pt}
    B \otimes_k F[z_3]_{\partial_{z_3} f_1}/(f_1 + q) = k[x_1,x_2,x_3,z_1,z_2,z_3]_{x_1x_2z_3}/(f_1 + q ,f_2) \twoheadrightarrow F
\end{equation}
given by the ideal $(z_3 - \alpha,x_2 - x_3, x_1 - x_2 -1, z_1 - \sqrt{-1} x_3, z_2 - 2)$, where $\alpha \in k^\ast$ is chosen such that $\alpha^2 + q \neq -1,0$.
In particular, we find that $f_1 + x_4 \in B[z_3,x_4]$ and $x_4 \in B[x_4]$ are admissible with respect to $B[x_4]$ and satisfy condition \eqref{eq:condition-star} by \Cref{ex:adding-hyperplane2}.

Thus, we can apply \Cref{thm:induction-step} and find by \eqref{eq:torsion-order-HPT-quartic} that the affine scheme 
$$
    X' := \Spec k(t)[x_1,x_2,x_3,x_4,z_1,z_2,z_3]_{x_1x_2}/(t-x_4(f_1 + x_4),f_2),
$$
is a geometrically integral $k(t)$-variety of dimension $5$ satisfying $$
    \Tor^{\Z/2}(X' \times_{k(t)} \overline{k(t)},W' \times_{k(t)} \overline{k(t)}) = 2,
$$
where $W' := \{x_1x_2z_1z_3 = 0\} \subset X'$ and $k(t)/k$ is a purely transcendental field extension of transcendence degree $1$.
In fact, the $k(t)$-scheme
$$
    X := \Spec k(t)[x_1,x_2,x_3,x_4,z_1,z_2,z_3]/(t-x_4(f_1 + x_4),f_2)
$$
is an integral $k$-variety by a similar argument as in \Cref{prop:general-deg-strictly-semi-stable} and hence of dimension $5$. Moreover, $X$ is Cohen-Macaulay by \cite[Proposition 18.13]{Eis95} and contains an open dense geometrically integral subscheme. Thus $X$ is geometrically integral and it satisfies $$
    \Tor^{\Z/2}(X \times_{k(t)} \overline{k(t)},W \times_{k(t)} \overline{k(t)}) = 2
$$
for the closed subscheme $W := \{x_1x_2z_1z_3 = 0\} \subset X$.
\end{example}

\begin{corollary}\label{cor:HPT-2cubics}
    Let $f_1,f_2 \in k[x_1,x_2,x_3,z_1,z_2,z_3]$ be the polynomials as in \eqref{eq:23-ci} and let $K := \overline{k(t)}$ be the algebraic closure of a purely transcendental field extension $k(t)/k$ of transcendence degree $1$. Then the following holds:
    \begin{enumerate}[label=(\roman*)]
        \item The $K$-algebra $B:=K[x_1,x_2,x_3,x_4,z_2,z_3]/(t - x_4(f_1+x_4))$ is strongly $K$-rational;
        \item $X := \Spec B[z_1]/(f_2)$ is an integral $K$-variety of dimension $5$;
        \item Set $W := \{x_1x_2z_1z_3 = 0\} \subset X$, then $\Tor^{\Z/2}(X,W) = 2$;
        \item For any field extension $F/k$ and $q \in F$, there exists an $F$-algebra epimorphism
        $$
            F[x_1,x_2,x_3,x_4,z_1,z_2,z_3]_{\partial_{z_1} f_2}/(t - x_4(f_1 + x_4),f_2 + q) \longtwoheadrightarrow F.
        $$
    \end{enumerate}
\end{corollary}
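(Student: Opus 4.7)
The plan is to deduce the four assertions from the construction in \Cref{ex:33-5fold}, with only (iv) requiring additional work. For (i), the relation $t = x_4(x_3 - z_3^2 + x_4)$ combined with $t \in K^{\ast}$ forces $x_4$ to be a unit in $B$ (since modding out by $x_4$ kills $t \neq 0$); solving $x_3 = t/x_4 + z_3^2 - x_4$ yields an isomorphism of $K$-algebras
\begin{equation*}
    B \cong K[x_1,x_2,x_4,z_2,z_3]_{x_4},
\end{equation*}
so $B$ is strongly $K$-rational. Properties (ii) and (iii) are precisely the statements established at the end of \Cref{ex:33-5fold}: integrality, geometric integrality, and the dimension count for $X$ follow from a Cohen--Macaulayness argument via \cite[Proposition 18.13]{Eis95} combined with \Cref{lem:rational-pt+geom-int}, and the equality $\Tor^{\Z/2}(X,W) = 2$ is obtained by applying \Cref{thm:induction-step} to the admissible pair $(f_1+x_4, x_4)$ arising from \Cref{ex:adding-hyperplane2}, together with the base computation \eqref{eq:torsion-order-HPT-quartic}.

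The main new input is (iv), for which I construct an explicit $F$-rational point on $\{t - x_4(f_1+x_4) = f_2 + q = 0\}$ lying in the open locus where $\partial_{z_1} f_2 = 2 x_1 z_1$ is invertible. Rewriting
\begin{equation*}
    f_2 = x_1 z_1^2 + x_2 z_2^2 + x_1 x_2 x_3 + (x_1 + x_2 - 1)^2 - 4 x_1 x_2
\end{equation*}
via the identity $1 + x_1^2 + x_2^2 - 2x_1 - 2x_2 - 2x_1 x_2 = (x_1 + x_2 - 1)^2 - 4 x_1 x_2$, I specialize $x_1 = 1$, $x_2 = -1$, $x_4 = 1$, $z_3 = 1$, which satisfies the first equation with $x_3 = t$ and reduces $f_2 + q$ to $z_1^2 - z_2^2 - t + 5 + q$. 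The substitution $z_2 = z_1 - u$ linearizes this in $z_1$, giving
\begin{equation*}
    z_1 = \frac{u^2 + t - 5 - q}{2u}.
\end{equation*}
Since $u^2 + (t - 5 - q) \in F[u]$ is a nonzero polynomial of degree two and $K^{\ast}$ is infinite, I can choose $u \in K^{\ast}$ so that $z_1 \neq 0$, making $\partial_{z_1} f_2 = 2 z_1$ a unit in $F$. Quotienting the $F$-algebra by the ideal that cuts out this point (with the localization variable sent to $(2 z_1)^{-1}$) yields the required epimorphism.

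The only genuine obstacle lies in (iv): a naive specialization of constants into $f_2 + q$ generically produces a quadratic equation in one unknown over $F$, which need not have an $F$-rational solution when $F$ is not algebraically closed. The specialization $x_1 = 1$, $x_2 = -1$ creates the difference of squares $z_1^2 - z_2^2$, and the change of variable $z_2 = z_1 - u$ then linearizes the resulting equation, bypassing this obstruction.
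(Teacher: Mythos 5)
Your proposal is correct and follows essentially the same route as the paper: (i) via the same isomorphism $B \cong K[x_1,x_2,x_4,z_2,z_3]_{x_4}$, (ii)--(iii) by citing the construction in \Cref{ex:33-5fold}, and (iv) by exhibiting an explicit rational point in the locus where $\partial_{z_1} f_2$ is invertible. The only difference is the choice of point in (iv): the paper sets $x_4=1$, $z_2=2$, $x_2=x_1-1$, $z_1=1$, $z_3=\sqrt{1-t}$ and solves linearly in $x_1$, which forces a separate appeal to \eqref{eq:23-ci-rational-pt} when $q=0$, whereas your substitution $z_2=z_1-u$ with a free parameter $u$ linearizes the equation in $z_1$ and treats all $q$ uniformly --- a minor but valid simplification.
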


\begin{proof}
    The first statement follows directly from the isomorphism of $K$-algebras
    $$
        B = K[x_1,x_2,x_3,x_4,z_2,z_3]/(t - x_4(x_3 - z_3^2+x_4)) \cong K[x_1,x_2,x_4,z_2,z_3]_{x_4}.
    $$
    The statement (ii) and (iii) are proven in \Cref{ex:33-5fold}. To prove statement (iv), let $F/K$ be a field extension and let $q \in F$. Then the quotient by the ideal $$
        \left(x_4-1,z_2-2,1-x_1+x_2,z_1-1,z_3 - \sqrt{1-t}\right)
    $$
    yields for $q \neq 0$ an $F$-algebra epimorphism
    $$
        F[x_1,x_2,x_3,x_4,z_1,z_2,z_3]_{\partial_{z_1} f_2}/(t - x_4(f_1 + x_4),f_2 + q) \longtwoheadrightarrow F[x_1]/(x_1 + q) \cong F.
    $$
    For the $q = 0$, the existence of such an $F$-algebra epimorphism follows from \eqref{eq:23-ci-rational-pt}.
\end{proof}

\section{Applications and main results}\label{sec:applications}

In this section we apply the machinery developed in \Cref{sec:affine-degenerations} to the base examples in \Cref{sec:base-examples} in order to construct certain affine complete intersections. Taking the closure of these complete intersections in different compactification of affine space $\aff^N$ enables us to prove the main results stated in the introduction.

\subsection{Complete intersections}

We start with the most straightforward compactification of $\aff^N$, namely $\CP^N$. 
In the following proposition, we construct affine complete intersections with given (relative) torsion order.
Their projective closure can be controlled via the theory of Gröbner basis and graded monomial orderings, see \Cref{prop:homogenization-monomial-order}.

\begin{proposition}\label{prop:aff-ci}
    Let $k$ be an uncountable, algebraically closed field and let $n,m \geq 2$ be two integers such that $m \in k^\ast$. For $s \geq 1$, consider an $s$-tuple $(d_1,\dots,d_s) \in \Z_{\geq 1}^s$ of positive integers satisfying $d_1 \geq n + m$.
    
    Then for all integers $N$ and $M$ satisfying the inequalities
    \begin{equation}\label{eq:prop_aff-ci:dimension-bound-hypersurfaces}
        s-1 \leq M \leq \sum\limits_{i=1}^s d_i - n - m \quad \text{and} \quad
        4 \leq N \leq n + 2^n - 1 + \sum\limits_{j=0}^{n-1} \binom{n}{j} \left\lfloor \frac{j}{m} \right\rfloor,  
    \end{equation}
    there exists polynomials $g_1,\dots,g_s \in k[x_1,\dots,x_N,y_1,\dots,y_{M}]$ of degree $\deg g_i = d_i$ such that the scheme $$
            X := \Spec k[x_1,\dots,x_N,y_1,\dots,y_{M}]/(g_1,\dots,g_s)
        $$
    is an (integral) $k$-variety of dimension $N + M - s$ and satisfies $\Tor^{\Z/m}(X,W) = m$,
    where $W := \{l \cdot \partial_{x_1} g_1 = 0\} \subset X$ for some non-zero polynomial $l \in k[x_1,\dots,x_{N}]$. 
    
    Moreover, if $s \leq M$ then the leading monomials of $g_1,\dots,g_s$ are relatively prime for some graded monomial ordering.
\end{proposition}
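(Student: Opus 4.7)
The plan is to construct $g_1, \ldots, g_s$ by induction on $s$, invoking Theorem~\ref{thm:base-examples-hypersurfaces} for the base case and Theorem~\ref{thm:add-hypers} repeatedly to add new hypersurfaces and to inflate the special polynomial's degree while introducing fresh variables. For the base case $s = 1$, Theorem~\ref{thm:base-examples-hypersurfaces} (with the variable $z$ relabelled as $x_1$) produces $g_1 \in k[x_1, \ldots, x_N]$ of any prescribed degree $d \in [n+m, d_1]$ satisfying \ref{item:thm_add-hypers:strongly-rat}--\ref{item:thm_add-hypers:tor-order} of Theorem~\ref{thm:add-hypers} with $r = 0$ and $l = 1$; one application of part~\ref{item:thm_add-hypers:increase-deg} of Theorem~\ref{thm:add-hypers}, starting from a base polynomial of degree $d_1 - d$, then adjusts the degree to $d_1$ while introducing any $M \in [0, d_1 - n - m]$ new variables.

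For the inductive step $s \geq 2$, suppose $g_1, \ldots, g_{s-1}$ have been built in $k[x_1, \ldots, x_N, y_1, \ldots, y_{M'}]$ satisfying \ref{item:thm_add-hypers:strongly-rat}--\ref{item:thm_add-hypers:ratl-pt} of Theorem~\ref{thm:add-hypers}. Applying part~\ref{item:thm_add-hypers:add-hypers} of Theorem~\ref{thm:add-hypers} with parameters $(d_s, M_s)$ yields a new polynomial $g_s$ of degree $d_s$ in a disjoint batch of $M_s$ fresh variables $y_{M'+1}, \ldots, y_{M'+M_s}$, subject to $M_s \leq d_s$ and $M_s = d_s$ or $M_s \geq 2$. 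Choosing $M_s$ appropriately, and interspersing further applications of part~\ref{item:thm_add-hypers:increase-deg} to enlarge $g_1$'s variable set, permits any value $M \in [s-1, \sum_{i=1}^s d_i - n - m]$. At each step the relative torsion order property \ref{item:thm_add-hypers:tor-order} is preserved, and by Remark~\ref{rem:to-add-hypers} the same polynomial $l \in k[x_1, \ldots, x_N]$ works throughout, yielding $\Tor^{\Z/m}(X, W) = m$ with $W = \{l \cdot \partial_{x_1} g_1 = 0\}$.

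To handle the leading monomial claim when $s \leq M$, choose the graded lexicographic ordering with $x_1 > \cdots > x_N > y_1 > \cdots > y_M$, grouping the $y$'s by construction step. By the explicit formulas in the proof of Theorem~\ref{thm:add-hypers} together with Remark~\ref{rem:to-add-hypers}, each $g_i$ for $i \geq 2$ lies inside the polynomial subring generated by its own disjoint batch of $y$ variables, so $\LM(g_i)$ involves only those. By Remark~\ref{rem:to-add-hypers}~\ref{item:rem:add-hypers_tildef}, the polynomial $g_1$ takes the form $f + (\text{linear terms in the $y$-batches added via part~\ref{item:thm_add-hypers:add-hypers}})$, and since $\deg g_1 = d_1 \geq n + m \geq 4$ these linear corrections have strictly lower total degree than $d_1$ and cannot enter $\LM(g_1)$; thus $\LM(g_1)$ lies in the $x$ variables together with the $y$'s introduced via part~\ref{item:thm_add-hypers:increase-deg}, which are disjoint from the batches attached to $g_2, \ldots, g_s$. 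Pairwise coprimality of $\LM(g_1), \ldots, \LM(g_s)$ follows.

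The main technical obstacle will be the careful parameter accounting needed to realize every $M$ in the stated range subject to the constraint $M_s = d_s$ or $M_s \geq 2$ in part~\ref{item:thm_add-hypers:add-hypers}, together with verifying via the explicit expressions from Theorem~\ref{thm:add-hypers} and Remark~\ref{rem:to-add-hypers} that the chosen graded lexicographic ordering indeed separates the leading-monomial supports across the different $g_i$'s.
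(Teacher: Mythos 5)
There is a genuine gap: your induction cannot realize the lower part of the stated range of $M$. After the harmless reduction to $d_i \geq 2$, every application of Theorem~\ref{thm:add-hypers}~\ref{item:thm_add-hypers:add-hypers} must use at least two fresh variables (the constraint ``$d = M$ or $M \geq 2$'' forces $M_s \geq 2$ once $d_s \geq 2$), and Theorem~\ref{thm:add-hypers}~\ref{item:thm_add-hypers:increase-deg} only ever \emph{adds} variables without adding equations. So building $g_2,\dots,g_s$ by $s-1$ applications of part~\ref{item:thm_add-hypers:add-hypers}, interspersed with applications of part~\ref{item:thm_add-hypers:increase-deg}, always yields $M \geq 2(s-1)$; your claim that ``choosing $M_s$ appropriately \dots permits any value $M \in [s-1, \sum_i d_i - n - m]$'' is therefore false for $s-1 \leq M \leq 2s-3$ (e.g.\ $M=s-1$, or $M=s$ as soon as $s \geq 3$). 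You flagged the parameter accounting as an obstacle, but the missing ingredient is not bookkeeping: it is a different construction for small $M$. The paper handles exactly this range by adding ``graph-type'' equations that contribute one new variable and one new equation each and leave the scheme unchanged up to isomorphism: for $M = s-1$ it takes $g_1 := f$ and $g_i := y_{i-1} + h_i(x_1,\dots,x_N)$ with $h_i$ arbitrary of degree $d_i$; for $s \leq M < 2s-2$ it produces only $s'-1 = M+1-s$ hypersurfaces via part~\ref{item:thm_add-hypers:add-hypers} (two variables each) and appends chained equations of the form $y_{j+1} + y_j^{d}$, using the explicit shape of $g_{s'}$ from Remark~\ref{rem:to-add-hypers}~\ref{item:rem:add-hypers_alternative-for-M2} to keep control of the defining equations.

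This also undercuts your leading-monomial argument in the range $s \leq M < 2s-2$, where the ``moreover'' claim is still required: there you cannot assign each $g_i$, $i \geq 2$, its own disjoint batch of $y$-variables, so pairwise coprimality of the $\LM(g_i)$ does not follow from disjointness of supports. In the paper's Case~b the appended graph equations have leading monomials that are powers of single fresh variables, and the remaining overlap between $g_1$ and the last equation is resolved by an explicit change of the equations together with a suitably chosen graded monomial ordering (not the naive $x_1 > \cdots > x_N > y_1 > \cdots > y_M$ order). For $M \geq 2s-2$ your argument essentially coincides with the paper's Case~a and is fine, as are the base case $s=1$, the persistence of the polynomial $l$ via Remark~\ref{rem:to-add-hypers}, and the torsion-order bookkeeping; but as written the proposal does not prove the proposition for all $M$ in the asserted range.
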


\begin{proof}
    Let $s \geq 1$ and let $(d_1,\dots,d_s) \in \Z_{\geq 1}^s$ be a tuple of positive integers such that $d_1 \geq n + m$.
    We can assume without loss of generality that each $d_i \geq 2$, as $d_1 \geq n+m = 4$ and for any $k$-algebra $A$ we have a $k$-algebra isomorphism $A[z]/(z) \cong A$.
    Let $N,M,d$ be positive integers satisfying the inequalities in \eqref{eq:prop_aff-ci:dimension-bound-hypersurfaces}.

    Since $N$ satisfies the bounds in \eqref{eq:prop_aff-ci:dimension-bound-hypersurfaces}, \Cref{thm:base-examples-hypersurfaces} shows that for every integer $d' \geq n+m$ there exist non-zero polynomials
    \begin{equation}\label{eq:prop_aff-ci:existence-hypers}
        f \in k[x_1,\dots,x_N] \quad \text{and} \quad l \in k[x_1,\dots,x_N]
    \end{equation}
    such that $f$ is irreducible of degree $d'$ and satisfies condition \eqref{eq:condition-star2} with $z = x_1$ as well as
    \begin{equation}\label{eq:prop_aff-ci:tor-order-hypersurface}
        \Tor^{\Z/m}\left(\Spec k[x_1,\dots,x_N]/(f),\Spec k[x_1,\dots,x_N]/(f,l \cdot \partial_{x_1} f)\right) = m.
    \end{equation}
    In particular, the properties \labelcref{item:thm_add-hypers:integral,item:thm_add-hypers:ratl-pt,item:thm_add-hypers:strongly-rat,item:thm_add-hypers:tor-order} in \Cref{thm:add-hypers} are satisfied for $f$ and the integers $n = N-1$ and $r = 0$.
    We will use the existence of such $f$ several times throughout the proof.

    If $s = 1$, then we set $d' := d_1 - M$ and note that $d' \geq m+n$ by \eqref{eq:prop_aff-ci:dimension-bound-hypersurfaces}. Consider the irreducible polynomial $f \in k[x_1,\dots,x_N]$ of degree $d'$ from \eqref{eq:prop_aff-ci:existence-hypers}. 
    Then, we define a polynomial $g_1 \in k[x_1,\dots,x_N,w_1,\dots,w_M]$ of degree $d_1$ by setting $g_1 := f$, if $M = 0$, or by applying \Cref{thm:add-hypers} \ref{item:thm_add-hypers:increase-deg} to $f$ and with $r = 0$, added degree $M$ and $M$ added variables and setting $g_1 := \check{f}$, if $M > 0$. In both cases we know that the affine scheme
    $$
        \Spec k[x_1,\dots,x_N,y_1,\dots,y_M]/(g_1)
    $$
    is an integral $k$-variety of dimension $N + M - 1$ and satisfies $\Tor^{\Z/m}(X,W) = m$,
    where $W := \{l \cdot \partial_{x_1} g_1 = 0\} \subset X$ for $l$ as in \eqref{eq:prop_aff-ci:existence-hypers}. This proves the proposition for $s = 1$. 
    
    Assume from now on $s \geq 2$. We prove the proposition by distinguishing three cases depending on $M$.

    \textbf{Case a.} If $M \geq 2s - 2$, we set $d' := \min \{d_1,d_1+ \dots + d_s - M\}$. 
    Note that $n + m \leq d' \leq d_1$ by \eqref{eq:prop_aff-ci:dimension-bound-hypersurfaces}.
    Since each $d_i \geq 2$ by assumption, we have 
    \begin{equation*}
        2s - 2 \leq M + d' - d_1 \leq d_2 + \dots +d_s.
    \end{equation*}
    Thus there exist (not necessarily unique) integers $M_2,\dots,M_s \in \Z_{\geq 2}$ such that $2 \leq M_i \leq d_i$ and $M_2 + \dots + M_r = M + d' - d_1$.
    We set $M_1 := d_1 - d'$. Let $f \in k[x_1,\dots,x_N]$ be the irreducible polynomial of degree $d'$ as in \eqref{eq:prop_aff-ci:existence-hypers}.
    By applying \Cref{thm:add-hypers} \ref{item:thm_add-hypers:increase-deg} to $f$ with added degree $M_1$ and $M_1$ added variables
    if $M_1 > 0$, we can assume that there exists an irreducible polynomial $\check{f} \in k[x_1,\dots,x_{N},y_1,\dots,y_{M_1}]$
    of degree $d_1$, which satisfies the properties \labelcref{item:thm_add-hypers:integral,item:thm_add-hypers:ratl-pt,item:thm_add-hypers:strongly-rat,item:thm_add-hypers:tor-order} in \Cref{thm:add-hypers}.
    Then we repeatedly apply \Cref{thm:add-hypers} \ref{item:thm_add-hypers:add-hypers} to $\check{f}$ with added degrees $d_2,\dots,d_s$ and $M_2,\dots,M_s$ added variables in order to obtain polynomials
    \begin{equation}\label{eq:prop_aff-ci:Case-a-gi}
        \begin{aligned}
        &g_1 \in k[x_1,\dots,x_N,y_1,\dots,y_M], \, g_2 \in k[y_{M_1 + 1},\dots,y_{M_1+M_2}], \\
        &g_3 \in k[y_{M_1+M_2+1},\, \dots,y_{M_1+M_2+M_3}],\dots,\, g_s \in k[y_{M-M_s+1},\dots,y_M]
        \end{aligned}
    \end{equation}
    of degree $\deg g_i = d_i$ such that the scheme
    $$
        X := \Spec k[x_1,\dots,x_{N},y_1,\dots,y_M]/(g_1,\dots,g_s)
    $$
    is an integral $k$-variety of dimension $N + M - s$ (see \ref{item:thm_add-hypers:integral}) and satisfies $\Tor^{\Z/m}(X,W) = m$, where $W := \{l \cdot \partial_{x_1} g_1 = 0\} \subset X$ for some $l \in k[x_1,\dots,x_{N}]$ (see \ref{item:thm_add-hypers:tor-order}). In fact, $l$  can be choosen as in \eqref{eq:prop_aff-ci:existence-hypers}, see \Cref{rem:to-add-hypers}.
    The claim about the leading monomials follows directly from \eqref{eq:prop_aff-ci:Case-a-gi} by choosing the graded lexicographical ordering, see \Cref{example:grlex-ordering}.
    Note that this uses implicitely that the degree of $g_1$ does not change under the construction in \Cref{thm:add-hypers} \ref{item:thm_add-hypers:add-hypers}, see \Cref{rem:to-add-hypers}.

    \textbf{Case b.} Assume $s \leq M < 2s - 2$. Define the integer $s' := M + 2 - s$ and note that $2 \leq s' < s$ by the assumption on $M$.
    Similar to Case a, we apply \Cref{thm:add-hypers} \ref{item:thm_add-hypers:add-hypers} repeatedly $s'-1$ times with added degrees $d_2,\dots,d_{s'}$ and $2,2,\dots,2$ added variables starting with the polynomial $f \in k[x_1,\dots,x_N]$ of degree $d' := d_1$ as in \eqref{eq:prop_aff-ci:existence-hypers}.
    Thus, we obtain polynomials
    \begin{equation}\label{eq:prop_aff-ci:Case-b-somegi}
        g_1 \in k[x_1,\dots,x_N,y_1,\dots,y_{2s'-2}], \, g_2 \in k[y_1,y_2], \, \dots, \, g_{s'} \in k[y_{2s'-3},y_{2s'-2}]
    \end{equation}
    of degree $\deg g_i = d_i$ as well as $\deg_x g_1 = d_1$ such that
    $$
        X' := \Spec k[x_1,\dots,x_N,y_1,\dots,y_{2s'-2}]/(g_1,\dots,g_{s'})
    $$
    is an integral $k$-variety of dimension $N + s' - 2 = N + M - s$ (see \ref{item:thm_add-hypers:integral}) and satisfies $\Tor^{\Z/m}(X',W') = m$, where $W' := \{l \cdot \partial_{x_1} g_1 = 0\} \subset X'$ for $l' \in k[x_1,\dots,x_N]$ as in \eqref{eq:prop_aff-ci:existence-hypers} (see \ref{item:thm_add-hypers:tor-order}).
    By \Cref{rem:to-add-hypers} \ref{item:rem:add-hypers_alternative-for-M2}, we can assume that
    \begin{equation}\label{eq:prop_aff-ci:Case-b-explicit-form-gs'}
        g_{s'} = a - y_{2s'-3}y_{2s'-2} + b y_{2s'-3}^{d_{s'}} \in k[y_{2s'-3},y_{2s'-2}].
    \end{equation}
    for some elements $a,b \in k^\ast$. We define additionally the polynomials
    \begin{equation}\label{eq:prop_aff-ci:Case-b-additionalgi}
        g_{s' + 1} := y_{2s'-1} + y_{2s'-2}^{d_{s'+1}}, \, g_{s'+2} := y_{2s'}  + y_{2s'-1}^{d_{s'+2}}, \, \dots, \, g_s := y_{s + s' - 2} + y_{s + s' - 3}^{d_s}
    \end{equation}
    in $k[y_{2s-1},\dots,y_M]$ of degree $\deg g_i = d_i$. Then there is an obvious isomorphism of affine $k$-schemes
    $$
        \begin{aligned}
        X &:= \Spec k[x_1,\dots,x_N,y_1,\dots,y_M]/(g_1,\dots,g_s) \\ &\cong \Spec k[x_1,\dots,x_N,y_1,\dots,y_{2s'-2}]/(g_1,\dots,g_{s'}) = X',
        \end{aligned}
    $$
    where $g_1,\dots,g_{s'}$ are as in \eqref{eq:prop_aff-ci:Case-b-somegi}. In particular, $X$ is an affine $k$-variety of dimension $N + M - s$.
    We note that the isomorphism sends $W' \subset X'$ to $W := \{l \cdot \partial_{x_1} g_1 = 0\} \subset X$, which immediately implies that $\Tor^{\Z/m}(X,W) = \Tor^{\Z/m}(X',W') = m$.
    The relative primeness of the leading monomial of $g_1,\dots,g_s$ for the graded lexicographical ordering in \Cref{example:grlex-ordering} follows from the explicit forms of $g_1,\dots,g_s$ in \eqref{eq:prop_aff-ci:Case-b-somegi}, \eqref{eq:prop_aff-ci:Case-b-explicit-form-gs'}, and \eqref{eq:prop_aff-ci:Case-b-additionalgi}.

    \textbf{Case c.} If $M = s-1$, then consider the polynomial $g_1 := f \in k[x_1,\dots,x_N]$ as in \eqref{eq:prop_aff-ci:existence-hypers} and define the polynomials
    $$
        g_2 := y_1 + h_2(x_1,\dots,x_N), \, g_3 := y_2 + h_3(x_1,\dots,x_N), \, \dots, \, g_s := y_M + h_s(x_1,\dots,x_N)
    $$
    in $k[x_1,\dots,x_N,y_1,\dots,y_M]$, where $h_2,\dots,h_s \in k[x_1,\dots,x_N]$ are arbitrary polynomials of degree $\deg h_i = d_i$. Then the claims in the proposition reduce in this case via the obvious isomorphism of affine $k$-schemes
    $$
        \Spec k[x_1,\dots,x_N,y_1,\dots,y_{M}]/(g_1,\dots,g_s) \cong \Spec  k[x_1,\dots,x_N]/(f) =: X'
    $$
    to the same statements about $X'$, which are known by \Cref{thm:base-examples-hypersurfaces}.
\end{proof}

As a first consequence, we obtain a new lower bound on the torsion order of complete intersections in projective space. Recall that any (smooth) complete intersection of dimension at least $2$ in projective space satisfies $h^{1,0} = 0$, see e.g.\ \cite[Expos\'e XI, Th\'eor\`eme 1.5]{SGA7}. In particular, its torsion order is infinite or it has trivial Chow group of zero-cycles ($\CH_0(-) \cong \Z$), see \Cref{lem:torsion-order-and-CH0}.

\begin{theorem}\label{thm:ci-torsion-order}
    Let $k$ be a field and let $n,m \geq 2$ be integers. Then the torsion order of a very general complete intersection of multidegree $(d_1,\dots,d_s)$ and dimension $D \geq 4$ is divisible by $m$,
    if $d_1 \geq n + m$  and the Fano index
    \begin{equation}\label{eq:ci-bound-Fano-index}
        r := D + s + 1 - \sum\limits_{i=1}^s d_i \leq 2^n + \sum\limits_{j=0}^{n-1} \binom{n}{j} \left\lfloor \frac{j}{m} \right\rfloor - m.
    \end{equation}
\end{theorem}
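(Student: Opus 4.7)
The plan is to deduce the theorem by producing an explicit projective complete intersection with controlled relative torsion order, and then degenerating the very general complete intersection to it. The affine model is supplied by \Cref{prop:aff-ci}, the passage to the projective closure is controlled via \Cref{prop:homogenization-monomial-order}, and the final step uses \Cref{lem:very-general,lem:torsion-order-degeneration}.

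First I would reduce to the case where $k$ is algebraically closed and uncountable (by \Cref{lem:properties-of-torsion-order}\,\labelcref{item:lem_properties-tor-order:field-extension}) and where every $d_i \geq 2$: any linear defining equation can be absorbed by restricting to the hyperplane it cuts, which preserves $D$, $r$, and the very-generality of the complete intersection. Since $d_1 \geq n+m \geq 4$ and each remaining $d_i \geq 2$, we have $\sum d_i \geq n+m+2(s-1)$. I would then set $N := r + n + m - 1$ and $M := \sum_{i=1}^s d_i - n - m$, so that $N + M = D + s$, the condition $s \leq M$ holds (for $s \geq 2$), and the Fano-index hypothesis precisely encodes $4 \leq N \leq n + 2^n - 1 + \sum_{j=0}^{n-1}\binom{n}{j}\lfloor j/m\rfloor$. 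The edge case $r \leq 0$ is trivial (the torsion order is then infinite), and a small adjustment of $N$ covers the remaining small cases with $r + n + m - 1 < 4$.

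With these parameters \Cref{prop:aff-ci} produces polynomials $g_1,\dots,g_s \in k[x_1,\dots,x_N,y_1,\dots,y_M]$ of respective degrees $d_i$, with pairwise relatively prime leading monomials for some graded monomial ordering, such that $X' := \Spec k[x_1,\dots,x_N,y_1,\dots,y_M]/(g_1,\dots,g_s)$ is an integral $k$-variety of dimension $D$ with $\Tor^{\Z/m}(X',W') = m$ for the closed $W' := \{l\cdot\partial_{x_1} g_1 = 0\} \subset X'$. By \Cref{prop:homogenization-monomial-order}, the homogenizations $g_i^h$ generate the defining ideal of the projective closure $\bar X' \subset \CP_k^{D+s}$, which is therefore a complete intersection of multidegree $(d_1,\dots,d_s)$ containing $X'$ as a dense affine open. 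Setting $W_{\bar X'} := (\bar X' \setminus X') \cup \overline{W'}$, one has $\bar X' \setminus W_{\bar X'} = X' \setminus W'$, and hence $\Tor^{\Z/m}(\bar X', W_{\bar X'}) = m$ by the very definition of the relative torsion order.

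To conclude, I would invoke \Cref{lem:very-general}\,\labelcref{item:very-gen:degeneration} to obtain a flat proper family $\mathcal X \to \Spec R$ over a DVR $R$ whose special fibre is $\bar X'$ and whose geometric generic fibre realises (up to a suitable identification of algebraically closed fields) the very general complete intersection $X$. Lifting $W_{\bar X'}$ to a closed subscheme $W_{\mathcal X} \subset \mathcal X$ with non-empty complementary fibres (e.g.\ the schematic closure), \Cref{lem:torsion-order-degeneration} yields $m = \Tor^{\Z/m}(\bar X', W_{\bar X'}) \mid \Tor^{\Z/m}(X, W_X)$ with $W_X := W_{\mathcal X} \cap X$; since the left-hand side divides $m$, equality holds. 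Picking a zero-dimensional closed $W_0 \subset W_X$ and combining parts~\labelcref{item:lem_properties-tor-order:bigger-subset} and (1) of \Cref{lem:properties-of-torsion-order} gives $m \mid \Tor^{\Z}(X, W_0)$. As $X$ is a smooth complete intersection of dimension $\geq 2$ we have $h^{1,0}(X) = 0$, so \Cref{lem:torsion-order-and-CH0} together with \Cref{lem:properties-of-torsion-order}\,(4) imply that either $\Tor(X) = \infty$ (trivial case) or $\Tor(X) = \Tor^{\Z}(X, W_0)$, in both cases yielding $m \mid \Tor(X)$. The main obstacle is ensuring that the affine complete intersection built by \Cref{prop:aff-ci} projectivises to a complete intersection of the \emph{same} multidegree in $\CP^{D+s}$, which is precisely what the Gröbner-basis condition of relatively prime leading monomials guarantees; aligning this algebraic requirement with the Fano-index bound is what forces the preliminary reduction to $d_i \geq 2$ and the parameter choice $N = r + n + m - 1$.
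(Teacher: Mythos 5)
Your route is the same as the paper's (reduce to $k$ algebraically closed and uncountable, build the affine model with \Cref{prop:aff-ci}, projectivise via \Cref{prop:homogenization-monomial-order}, degenerate with \Cref{lem:very-general} and \Cref{lem:torsion-order-degeneration}, and conclude via \Cref{lem:torsion-order-and-CH0} and \Cref{lem:properties-of-torsion-order}), but the final step has a genuine flaw as written: your choice of $W_{\mathcal X}$. The schematic closure of $W_{\bar X'}$ inside the total space $\mathcal X$ is $W_{\bar X'}$ itself, a closed subscheme supported in the special fibre, so $W_X := W_{\mathcal X}\cap X$ is \emph{empty} on the generic fibre. Then \Cref{lem:torsion-order-degeneration} only gives $m \mid \Tor^{\Z/m}(X,\emptyset)$, which is vacuous: since $X$ is proper and $\delta_X$ has degree $1$, its order in $\CH_0(X_{k(X)},\Z/m)$ is exactly $m$ for trivial reasons. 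Moreover there is then no non-empty zero-dimensional $W_0\subset W_X$ containing a degree-one zero-cycle, so \Cref{lem:properties-of-torsion-order}(4) cannot be invoked. What is needed — and what the paper arranges by intersecting the total space with $D$ general hyperplane sections through a closed point of $W$ — is a closed $W_{\mathcal X}\subset\mathcal X$ of relative dimension $0$ whose intersection with the generic fibre is non-empty and whose intersection with the special fibre is \emph{contained} in $W_{\bar X'}$; the containment is what guarantees, via \Cref{lem:properties-of-torsion-order}(2) and the fact that every relative $\Z/m$-torsion order divides $m$, that the special-fibre input to the degeneration lemma is still equal to $m$. With such a $W_{\mathcal X}$ your concluding chain of divisibilities is correct.

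A smaller point: dismissing $r\leq 0$ as trivial ``because the torsion order is then infinite'' is not justified over an arbitrary field (the paper records infinitude only for non-Fano complete intersections over $\C$, and in positive characteristic this is not clear). This side claim is unnecessary: after your reduction to $d_i\geq 2$, for $r\leq 0$ one can still pick $N$ in the admissible window $[\max(4,\,r+n+m-1),\,\min(D,\dots)]$ and run the identical degeneration argument, which is effectively what the paper does. Your remaining parameter bookkeeping ($N=r+n+m-1$, $M=\sum_i d_i-n-m$, absorbing linear equations, the vacuity of the leading-monomial condition when $s=1$ and $M=0$) is consistent with the paper's proof.
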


\begin{proof}
    Since the torsion order of any $k$-variety is divisible by the torsion order of its base change to any field extension $L/k$ and the notion of very general is stable under field extension by \Cref{lem:very-general} \ref{item:very-gen:field-ext}, 
    we can assume without loss of generality that the field $k$ is algebraically closed and uncountable.

    Let $(d_1,\dots,d_s) \in \Z_{\geq 1}^s$ and $D \geq 4$ be positive integers satisfying $d_1 \geq n + m$ and
    $$
         D + s + 1 - \sum\limits_{i=1}^s d_i \leq 2^n + \sum\limits_{j=0}^{n-1} \binom{n}{j} \left\lfloor \frac{j}{m} \right\rfloor - m.
    $$
    Then we easily find integers $N$ and $M$ with $$
        s \leq M \leq \sum\limits_{j=1}^s d_i - n - m \quad \text{and} \quad 4 \leq N \leq n + 2^n - 1 + \sum\limits_{j= 0}^{n-1} \binom{n}{j} \left\lfloor \frac{j}{m} \right\rfloor
    $$
    such that $D = N + M - s$. 
    By \Cref{prop:aff-ci} with $N$ and $M$ as above, we know that there exist polynomials $$
        f_1,\dots,f_s \in k[x_1,\dots,x_N,y_1,\dots,y_M]
    $$
    of degree $\deg f_i = d_i$ such that the leading monomials of $f_1,\dots,f_s$ are relatively prime for some graded monomial ordering and 
    the affine $k$-variety
    $$
        X^\circ := \Spec k[x_1,\dots,x_N,y_1,\dots,y_M]/(f_1,\dots,f_s)
    $$
    has dimension $D$ and satisfies $\Tor^{\Z/m}(X^\circ,W^\circ) = m$ for some closed subscheme $W^\circ \subset X^\circ$.

    Let $X$ be the projective closure of $X^\circ$ in $\CP^{N+M}$.
    Then it follows immediately that $X$ is a (projective) $k$-variety of dimension $D$ satisfying 
    \begin{equation}\label{eq:thm_ci:torsion-order-example}
        \Tor^{\Z/m}(X,W) = m,
    \end{equation}
    where $W \subset X$ is the unique closed and reduced subscheme satisfying $X \setminus W = X^\circ \setminus W^\circ$ in $X$.
    Moreover, \Cref{prop:homogenization-monomial-order} implies that
    $$
        X = \Proj k[x_0,\dots,x_N,y_1,\dots,y_M]/(f_1^h,\dots,f_s^h),
    $$
    where $f_i^h \in k[x_0,\dots,x_N,y_1,\dots,y_M]$ is the homogenization of the polynomial $f_i$, see \Cref{def:homogenization}. 
    Thus $X$ is an integral complete intersection of multi-degree $(d_1,\dots,d_s)$ in $\CP^{N+M} = \CP^{D + s}$ such that the $\Z/m$-torsion order relative to some non-empty closed subscheme is equal to $m$.
    
    Let $X_{d_1,\dots,d_s}$ be a very general complete intersection of multidegree $(d_1,\dots,d_s)$ in $\CP^{D+s}$ over $k$. 
    We aim to show that $m$ divides $\Tor(X_{d_1,\dots,d_s})$ by using \eqref{eq:thm_ci:torsion-order-example}.
    By \Cref{lem:very-general} \ref{item:very-gen:degeneration}, up to a base change to an algebraically closed field extension of $k$ the variety $X_{d_1,\dots,d_s}$ degenerates to $X$.
    Since the torsion order is stable under base changes to algebraically closed field extensions, see \Cref{lem:properties-of-torsion-order}, we can assume that $X_{d_1,\dots,d_s}$ degenerates to $X$.
    The intersection of $D$ general hyperplane sections through a closed point of $W \subset X$ yields a closed subscheme $\mathcal{W}$ of the total space of the degeneration, which has relative dimension $0$ and whose restriction to $X$ is contained in $W$.
    Thus, we conclude from \Cref{lem:torsion-order-degeneration} applied to the total space of the degeneration with closed subscheme $\mathcal{W}$ that $$
        \Tor^{\Z/m}(X_{d_1,\dots,d_s},W_{d_1,\dots,d_s}) = \Tor^{\Z/m}(X,W) \overset{\eqref{eq:thm_ci:torsion-order-example}}{=} m,
    $$
    where $W_{d_1,\dots,d_s} := \mathcal{W} \cap X_{d_1,\dots,d_s}$ is a non-empty zero-dimensional closed subscheme of $X_{d_1,\dots,d_s}$.
    In particular, we find that $m$ divides the torsion order $\Tor(X_{d_1,\dots,d_s})$ by \Cref{lem:properties-of-torsion-order}, as the torsion order of $X_{d_1,\dots,d_s}$ is infinite or $\CH_0(X_{d_1,\dots,d_s}) \cong \Z$.
\end{proof}

\begin{proof}[Proof of \Cref{thm:intro-torsion-order-ci}:]
    Let $r,m \in \Z_{\geq 1}$ be positive integers. If $m = 1$, then the statement of the theorem is trivial, so we can assume that $m \geq 2$. We aim to deduce the theorem from \Cref{thm:ci-torsion-order}.
    Up to reordering $(d_1,\dots,d_s) \in \Z_{\geq 1}^s$, we can assume that $d_1 \geq \log_2 (r + m) + m$.
    We set $n := d_1 - m$ and note that
    $$
        n \geq \lceil \log_2(r+m) \rceil \geq \lceil \log_2(3) \rceil = 2.
    $$
    Note also that the dimension of a complete intersection $X$ of multidegree $(d_1,\dots,d_s)$ with positive Fano index $r > 0$ is at least
    $$
        \dim X \geq \sum\limits_{i=1}^s d_i - s \geq d_1 - 1 \geq \lceil \log_2(1+m) \rceil + m - 1 \geq 3.
    $$
    We observe that $\dim X = 3$ if and only if $s = 1$, $m = 2$ and $d_1 = 4$. 
    Note that the statement for quartic threefolds has been proven in \cite{CTP16}, see also \cite[Theorem 1.1]{Sch21-torsion}.
    Thus we can assume that $\dim X \geq 4$. Since
    $$
        r \leq 2^{n} - m \leq 2^n + \sum\limits_{j=0}^{n-1} \binom{n}{j} \left\lfloor \frac{j}{m} \right\rfloor - m,
    $$
    \Cref{thm:intro-torsion-order-ci} follows directly from \Cref{thm:ci-torsion-order}.
\end{proof}

\subsection{Complete intersections of small Fano index}

Starting with the examples in \Cref{sec:HPT-quartic}, we obtain additional affine complete intersection of small Fano index.

\begin{proposition}\label{prop:aff-ci-low-index}
    Let $k$ be an uncountable algebraically closed field of characteristic different from $2$. 
    Let $(d_1,\dots,d_s) \in \Z_{\geq 2}^s$ be an $s$-tuple for some integer $s \geq 1$.
    Then for all integers $M$ satisfying $$s \leq M \leq d_1 + \dots +d_s - 3,$$ there exist polynomials
    $f_1,\dots,f_s \in k[x_1,\dots,x_{4+M}]$
    of degree $\deg f_i = d_i$ whose leading monomials are relative prime for some graded monomial ordering
    such that
    $$
        X := \Spec k[x_1,\dots,x_{4+M}]/(f_1,\dots,f_s)
    $$
    is a (integral) $k$-variety of dimension $4 + M - s$ and satisfies $\Tor^{\Z/2}(X,W) = 2$ for some non-empty closed subscheme $W \subset X$.
\end{proposition}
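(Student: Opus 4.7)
The plan is to follow the case structure of the proof of \Cref{prop:aff-ci} but with the hypersurface base examples from \Cref{thm:base-examples-hypersurfaces} replaced by the low-dimensional complete intersections collected in \Cref{sec:base-examples}: the quartic fourfold of \Cref{ex:HPT-quartic} (whose condition \eqref{eq:condition-star2} is verified in \Cref{lem:HPT-quartic-rational-pt}), the $(2,3)$-fourfold defined in \eqref{eq:23-ci}, the $(2,2,2)$-fourfold of \Cref{cor:HPT-3quadrics}, and the $(3,3)$-fivefold of \Cref{cor:HPT-2cubics}. Each satisfies the conditions \labelcref{item:thm_add-hypers:strongly-rat,item:thm_add-hypers:integral,item:thm_add-hypers:tor-order,item:thm_add-hypers:ratl-pt} of \Cref{thm:add-hypers} for an appropriate choice of $n$, $r$, and distinguished variable $z$.

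After reordering so that $d_1 \geq \cdots \geq d_s$, I distinguish four regimes. When $d_1 \geq 4$ the quartic fourfold in $5$ variables serves as the base, and I mimic Cases a, b, c of the proof of \Cref{prop:aff-ci}: raise the degree of the quartic to $d_1$ via \Cref{thm:add-hypers} \ref{item:thm_add-hypers:increase-deg}, then successively add $s-1$ hypersurfaces of degrees $d_2, \ldots, d_s$ via \Cref{thm:add-hypers} \ref{item:thm_add-hypers:add-hypers}, distributing the $M-1$ new variables across the steps and invoking the $M=2$ alternative of \Cref{rem:to-add-hypers} \ref{item:rem:add-hypers_alternative-for-M2} near the lower bound of $M$. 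When $d_1 \leq 3$ I bootstrap from a different base: from the $(3,3)$-fivefold if at least two $d_i$ equal $3$, from the $(2,3)$-fourfold if exactly one $d_i$ equals $3$, and from the $(2,2,2)$-fourfold if all $d_i = 2$ (which forces $s \geq 3$ because $M \leq \sum d_i - 3$). The remaining hypersurfaces are added by the two mechanisms above.

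Relative primeness of $\LM(g_1), \ldots, \LM(g_s)$ for the graded lexicographic ordering of \Cref{example:grlex-ordering}, with variables introduced in later steps placed last in the variable order, is verified exactly as in the proof of \Cref{prop:aff-ci}; the explicit form of the polynomials produced by \Cref{thm:add-hypers} and \Cref{rem:to-add-hypers} reduces this to a direct check.

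The main obstacle is the exceptional case $(d_1, d_2) = (3, 3)$ with $M = 2$ --- a $(3, 3)$-fourfold in six variables of Fano index $r = 1$ --- which is not directly produced by any of the bases above, since the $(3,3)$-fivefold has $M_0 = 3$ and \Cref{thm:add-hypers} only adds variables. I would handle this via a direct construction that modifies the trivial trick of Case c in \Cref{prop:aff-ci}: with $f$ the quartic of \Cref{ex:HPT-quartic}, take $g_1 := y - x_2 z_3^2$ and $g_2 := f + x_1 y - x_1 x_2 z_3^2$, both of degree $3$. The substitution $y = x_2 z_3^2$ identifies $\Spec k[x_1, x_2, z_1, z_2, z_3, y]/(g_1, g_2)$ with the quartic fourfold $\Spec k[x_1, x_2, z_1, z_2, z_3]/(f)$, so the $\Z/2$-torsion order relative to an appropriate subscheme is inherited from \eqref{eq:torsion-order-HPT-quartic}; graded lex with $x_1 > x_2 > z_1 > z_2 > z_3 > y$ gives $\LM(g_1) = x_2 z_3^2$ and $\LM(g_2) = x_1 z_1^2$, which are coprime. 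An analogous ad hoc modification handles the remaining boundary multidegrees $(3,3,d_3,\ldots,d_s)$ with $M = s$ that lie just below the reach of the main bootstrapping.
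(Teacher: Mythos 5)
There is a genuine gap in the regime where every $d_i \in \{2,3\}$: your only tools there are \Cref{thm:add-hypers} \ref{item:thm_add-hypers:add-hypers} and \ref{item:thm_add-hypers:increase-deg}, and since the degrees are capped at $3$ you cannot use \ref{item:thm_add-hypers:increase-deg} at all, while \ref{item:thm_add-hypers:add-hypers} adds each new equation together with at least $M_i \geq 2$ new variables. Starting from the $(2,2,2)$-fourfold ($M=3$), the $(2,3)$-fourfold ($M=2$) or the $(3,3)$-fivefold ($M=3$), this only reaches the upper part of the admissible range, roughly $M \geq 2s-2$ (e.g.\ for all $d_i=2$ you get exactly $M = 3 + 2(s-3) = 2s-3$, the top value, and nothing below it). The proposition, however, demands every $M$ down to $M = s$, so cases such as $(2,2,2,2)$ with $M=4$, $(3,2,2)$ with $M=3$, or $(3,3,3)$ with $M\in\{3,4\}$ are not produced by your bootstrapping. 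Your closing sentence misdiagnoses the residual set as ``$(3,3,d_3,\dots,d_s)$ with $M=s$'' and replaces the needed argument by an appeal to an unspecified ``analogous ad hoc modification''; in fact the missing cases form a whole range $s \leq M \lesssim 2s-2$ across all three subcases. The paper closes exactly this gap with an elimination mechanism that adds an equation at the cost of only \emph{one} new variable: the quadrics $x_{4+i}+x_{3+i}^2$ in its all-quadrics case, and the chain polynomials \eqref{eq:prop:small-index-linear-poly} together with the splitting of the quartic \eqref{eq:HPT-quartic} into a cubic and $x_3 - x_2 z_3^{d_s-1}$ in its $d_1=3$ case (your ad hoc $(3,3)$-fourfold construction with $g_1 = y - x_2z_3^2$ is precisely this splitting for $s=2$, and is correct, including the leading-monomial check). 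Without incorporating such one-variable-per-equation moves systematically, your proof does not cover the stated range of $M$.

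Two smaller points. First, for $d_1 \geq 4$ you could simply quote \Cref{prop:aff-ci} with $n=m=2$ and $N\in\{4,5\}$, as the paper does, rather than re-running its Cases a--c from the quartic of \Cref{ex:HPT-quartic}; your version works but is redundant. Second, if you want to use the $(2,3)$-pair \eqref{eq:23-ci} as a base for \Cref{thm:add-hypers}, you must fix a presentation and verify \labelcref{item:thm_add-hypers:strongly-rat,item:thm_add-hypers:integral,item:thm_add-hypers:tor-order,item:thm_add-hypers:ratl-pt} for it (e.g.\ distinguished variable $z_1$, background $f_1 = x_3 - z_3^2$, so that $B \cong k[x_1,x_2,z_2,z_3]$); the paper only checks such conditions for the presentations entering \Cref{cor:HPT-3quadrics} and \Cref{cor:HPT-2cubics}, so the \labelcref{eq:condition-star2}-type verification for your choice is additional unverified work, though plausibly routine.
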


The construction of $f_1,\dots,f_s$ is very similar to the construction in \Cref{prop:aff-ci} replacing the base example from \Cref{sec:hypersurface-examples} with the examples from \Cref{sec:HPT-quartic}. We provide some details for the convenience of the reader.

\begin{proof}
    Up to reordering the $d_i$'s, we can assume that $d_1 \geq d_2 \geq \dots \geq d_s$. If $d_1 \geq 4$, then the proposition is a special case of \Cref{prop:aff-ci} with $n = m = 2$ and $N = 4$, except if $M = d_1 + \dots + d_s - 3$, then take $N = 5$. So we can assume from now on that each $d_i \in \{2,3\}$. We distinguish three cases.

    \textbf{Case a.} Assume $d_1 = \dots = d_s = 2$ and $s \geq 3$. 
    Define $s_0 := 2s - M$ and note that $3 \leq s_0 \leq s$. Let $q_1,q_2,q_3 \in k[x_1,\dots,x_7]$ be the polynomials as in \eqref{eq:HPT-3quadrics-correct-chart} and define the polynomial
    $$
        q_i := x_{4+i} + x_{3+i}^2 \in k[x_1,\dots,x_{4+s_0}].
    $$
    for $i \in \{4,\dots,s_0\}$.
    Then we have an isomorphism of $k$-algebras $$
        k[x_1,x_3,\dots,x_{4+s_0}]/(q_1,q_3,q_4\dots,q_{s_0}) \cong k[x_1,x_3,\dots,x_7]/(q_1,q_3).
    $$
    Thus, it follows from \eqref{cor:HPT-3quadrics} that the assumption \labelcref{item:thm_add-hypers:integral,item:thm_add-hypers:strongly-rat,item:thm_add-hypers:tor-order,item:thm_add-hypers:ratl-pt} of \Cref{thm:add-hypers} are satisfied for $q_1,q_3,q_4,\dots,q_{s_0}$ and $f = q_2$ with $z = x_2$. Applying \Cref{thm:add-hypers} \ref{item:thm_add-hypers:add-hypers} with added degree $2$ and $2$ added variables $(s-s_0)$ times yields quadratic polynomials
    $$
    \begin{aligned}
        &f_1 \in k[x_0,\dots,x_{4 + 2s - s_0}], \quad f_2 := q_1 \in k[x_0,\dots,x_{4+s_0}], \\ 
        &f_i := q_i \in k[x_0,\dots,x_{s_0}], \quad f_{s_0 + j} \in k[x_{4 + s_0 + 2j -1},x_{4+s_0 + 2j}]
    \end{aligned}
    $$
    for $i \in \{3,\dots,s_0\}$ and $j \in \{1,\dots,s - s_0\}$ such that the affine scheme
    $$
        X := \Spec k[x_0,\dots,x_{4+M}]/(f_1,\dots,f_s)
    $$
    is an integral $k$-variety of dimension $4+M-s$ and $\Tor^{\Z/2}(X,W) = 2$ for some non-empty closed subscheme $W \subset X$.

    Note that the leading monomials of the polynomials $q_1,\dots,q_{s_0}$ with respect to the graded monomial ordering given by $x_2 > x_3 > \dots > x_{4+s_0} > x_1$ are
    $$
        \LM(q_1) = x_3^2, \quad \LM(q_2) = x_2^2, \quad \LM(q_3) = x_5^2, \quad \LM(q_i) = x_{3+i}^2
    $$
    for $i \in \{4,\dots,s_0\}$; in particular the leading monomials of $q_1,\dots,q_{s_0}$ are relatively prime. It follows from \Cref{thm:add-hypers} that the leading monomials of the polynomials $f_1,\dots,f_s$ are relatively prime with respect to the graded monomial ordering given by 
    $$
        x_2 > x_3 > \dots > x_{4+M} > x_1,
    $$
    see also \Cref{rem:to-add-hypers} \ref{item:rem:add-hypers_alternative-for-M2}. Thus, we can assume now $d_1 = 3$.

    \textbf{Case b.} Assume $d_1 = 3$ and $M \neq 3s -3$.
    We aim to apply a similar argument as in \Cref{prop:aff-ci}, which we spell out for the convenience of the reader.

    Let $f \in k[x_1,x_2,z_1,z_2,z_3]$ be the polynomial of degree $4$ as in \eqref{eq:HPT-quartic}.
    It is shown in \Cref{ex:HPT-quartic} and \Cref{lem:HPT-quartic-rational-pt} that the polynomial $f$ satisfies the assumption \labelcref{item:thm_add-hypers:integral,item:thm_add-hypers:strongly-rat,item:thm_add-hypers:tor-order,item:thm_add-hypers:ratl-pt} of \Cref{thm:add-hypers} for $r = 0$. 
    Define $$
            s_1 := \max \{e \in \{0,1,\dots,s-2\} : M - s - e \geq 0\}
    $$
    and set $M_0 := M - s + s_1 \in \Z$. Note that $2s_1 \leq M_0 \leq d_2 + d_3 + \dots + d_{s_1 + 1}$.
    Then there exists integers $M_2,\dots,M_{s_1+1} \in \Z_{\geq 2}$ such that
    $$
        M_2 + \dots + M_{s_1 +1} = M_0.
    $$
    By applying \Cref{thm:add-hypers} \ref{item:thm_add-hypers:add-hypers} $s_1$-times to the polynomial $f$ with added degree $d_2,\dots,d_{s_1+1}$ and $M_2,\dots,M_{s_1+1}$ added variable, we find polynomials
    \begin{equation}\label{eq:prop:small-index-first-poly}
        \begin{aligned}
        &\tilde{f} \in k[x_1,x_2,z_1,z_2,z_3,y_1,\dots,y_{M_0}], \ f_2 \in k[y_1,\dots,y_{M_2}], \\
        &f_3 \in k[y_{M_2 + 1},\dots,y_{M_2 + M_3}], \ \dots\ , \ f_{s_1 + 1} \in k[y_{M_0-M_{s_1+1}},\dots,y_{M_{s_1+1}}]
        \end{aligned}
    \end{equation}
    of degree $\deg \tilde{f} = 4$ and $\deg f_i = d_i$ for $i =2,\dots,s_1+1$ such that 
    $$
        X := \Spec  k[x_1,x_2,z_1,z_2,z_3,y_1,\dots,y_{M_0}]/(\tilde{f},f_2,f_3,\dots,f_{s_1+1})
    $$
    is a $k$-variety of dimension $M_0+ 4 - s_1$ satisfying $\Tor^{\Z/2}(X,W) = 2$ for some non-empty closed subscheme $W \subset X$. Moreover, the polynomial $\tilde{f}$ is of the form $\tilde{f} = f + h$ for some linear polynomial $h \in k[y_1,\dots,y_{M_0}]$, see \Cref{rem:to-add-hypers} \ref{item:rem:add-hypers_tildef}.

    For each $j \in \{s_1 + 2, \dots,s-1\}$ define the polynomial
    \begin{equation}\label{eq:prop:small-index-linear-poly}
        f_{j} := \begin{cases}
            z_2^{d_j} + z_4 & \text{if}\ j = s_1 + 2, \\
            z_{j+1-s_1}^{d_j} + z_{j+2-s_1} & \text{otherwise.}
        \end{cases}
    \end{equation}
    in $ k[z_2,z_4,z_5,\dots,z_{s+1-s_1}]$.
    Then we have
    $$
        \Spec k[x_1,x_2,z_1,\dots,z_{s+1-s_1},y_1,\dots,y_{M_0}]/(\tilde{f},f_2,f_3,\dots,f_{s-1}) \cong X
    $$
    Moreover, as $\tilde{f} = f + h$ in $k[x_1,x_2,z_1,z_2,z_3,y_1,\dots,y_{M_0}]$, where $f$ is as in \eqref{eq:HPT-quartic} and $h \in k[y_1,\dots,y_{M_0}]$ is some linear polynomial. We see that
    $$
        X \cong \Spec k[x_1,x_2,x_3,z_1,\dots,z_{s+1-s_1},y_1,\dots,y_{M_0}]/(f_1,\dots,f_s),
    $$
    where $f_2,\dots,f_{s_1+1}$ are as in \eqref{eq:prop:small-index-first-poly}, $f_{s_1+2},\dots,f_{s-1}$ are as in \eqref{eq:prop:small-index-linear-poly}, and
    $$
        f_1 :=  x_1 z_1^2 + x_2 z_2^2 + x_1 x_3^{3 -d_s} + (1 + x_1^2 + x_2^2 - 2x_1 - 2x_2 - 2x_1x_2) + h, \quad f_s := x_3 - x_2 z_3^{d_s -1}.
    $$
    Note that $3 + s+ 1 -s_1 + M_0 = 4 + M$ and that the leading monomials of the polynomials $f_1,\dots,f_s$ are relative prime for the graded monomial ordering
    $$
        x_1 > z_1 > x_2 > x_3 > z_2 > \dots > z_{s+1-s_1} > y_1 > y_2 > \dots > y_{M_0},
    $$
    which concludes this case.

    \textbf{Case c.} Assume $d_1 = 3$ and $M = 3s -3$, in particular $d_1 = \dots = d_s = 3$. 
    By \Cref{cor:HPT-2cubics}, there exist cubic polynomials $c_1,c_2 \in k[x_1,x_2,\dots,x_7]$ such that the assumptions \labelcref{item:thm_add-hypers:integral,item:thm_add-hypers:ratl-pt,item:thm_add-hypers:tor-order,item:thm_add-hypers:strongly-rat} of \Cref{thm:add-hypers} are satisfied.
    By applying \Cref{thm:add-hypers} \ref{item:thm_add-hypers:add-hypers} $(s-2)$-times with added degree $3$ and $3$ added variable each time, we obtain polynomials
    $$
        f_1,\dots,f_{s} \in k[x_1,\dots,x_{3s+1}]
    $$
    of degree $\deg f_i = 3$ such that the affine scheme
    $$
        X := \Spec k[x_1,\dots,x_{3s+1}]/(f_1,\dots,f_s)
    $$
    is an integral $k$-variety of dimension $2s+1$ and satisfy $\Tor^{\Z/2}(X,W) = 2$ for some non-empty closed subscheme $W \subset X$, which completes the third and final case.
\end{proof}

\begin{theorem}\label{thm:ci-torsion-order-low-index}
    Let $k$ be a field of characteristic different from $2$. Then the torsion order of a very general complete intersection $X \subset \CP^N$ of multidegree $(d_1,\dots,d_s) \in \Z^s_{\geq 2}$ is divisible by $2$ if $N \geq 4 + s$ and the Fano index $r := N +1 - d_1 -\dots - d_s \leq 2$.
\end{theorem}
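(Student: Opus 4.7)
The plan is to mirror the proof of \Cref{thm:ci-torsion-order} almost verbatim, with the only change being that \Cref{prop:aff-ci-low-index} replaces \Cref{prop:aff-ci} as the source of the affine base example.
First, by \Cref{lem:properties-of-torsion-order} \ref{item:lem_properties-tor-order:field-extension} and \Cref{lem:very-general} \ref{item:very-gen:field-ext}, I may pass to an algebraically closed uncountable overfield and assume that $k$ is algebraically closed and uncountable. Set $D := N - s = \dim X$ and take $M := D + s - 4 = N - 4$. The hypothesis $N \geq s+4$ translates into $s \leq M$, and the Fano-index hypothesis $r \leq 2$ is equivalent to $\sum d_i \geq N - 1$, i.e.\ $M \leq \sum_i d_i - 3$, so the integer $M$ satisfies exactly the numerical bounds required by \Cref{prop:aff-ci-low-index}.

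Applying \Cref{prop:aff-ci-low-index} with this $M$ produces polynomials $f_1,\dots,f_s \in k[x_1,\dots,x_{4+M}] = k[x_1,\dots,x_N]$ of degrees $d_1,\dots,d_s$, whose leading monomials are relatively prime for some graded monomial ordering, together with a non-empty closed subscheme $W^\circ$ of the $k$-variety
$$X^\circ := \Spec k[x_1,\dots,x_N]/(f_1,\dots,f_s)$$
of dimension $D$ such that $\Tor^{\Z/2}(X^\circ, W^\circ) = 2$.
By \Cref{prop:homogenization-monomial-order}, the relative primeness of the leading monomials ensures that the projective closure $X \subset \CP^N_k$ is cut out by the homogenizations $f_1^h,\dots,f_s^h$, so $X$ is a complete intersection of multidegree $(d_1,\dots,d_s)$ in $\CP^N_k$ and dimension $D$. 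Taking $W \subset X$ to be the unique closed reduced subscheme with $X \setminus W = X^\circ \setminus W^\circ$, we inherit $\Tor^{\Z/2}(X,W) = 2$.

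To transfer this to a very general member, I apply \Cref{lem:very-general} \ref{item:very-gen:degeneration}: up to base change, a very general complete intersection $X_{d_1,\dots,d_s}$ of multidegree $(d_1,\dots,d_s)$ in $\CP^N$ degenerates to $X$. Intersecting the total space with $D$ general hyperplanes through a closed point of $W$ produces a closed subscheme $\mathcal{W}$ of relative dimension $0$ whose restriction to $X$ is contained in $W$, and \Cref{lem:torsion-order-degeneration} together with \Cref{lem:properties-of-torsion-order} \ref{item:lem_properties-tor-order:bigger-subset} yields
$$\Tor^{\Z/2}(X_{d_1,\dots,d_s}, \mathcal{W}\cap X_{d_1,\dots,d_s}) = \Tor^{\Z/2}(X, W) = 2.$$
Since smooth complete intersections of dimension $\geq 2$ in projective space satisfy $h^{1,0}=0$, \Cref{lem:torsion-order-and-CH0} gives $\CH_0(X_{d_1,\dots,d_s}) \cong \Z$ (unless the torsion order is already infinite, in which case the conclusion is trivial). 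Hence \Cref{lem:properties-of-torsion-order} implies that $\Tor(X_{d_1,\dots,d_s})$ is divisible by the above relative torsion order, namely by $2$.

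There is no genuine new obstacle here beyond the base construction \Cref{prop:aff-ci-low-index}, which is already established in the previous section; the potentially delicate point is merely the arithmetic verification that $M = N-4$ lies in the admissible interval $[s, \sum d_i - 3]$, which is exactly equivalent to the two hypotheses $N \geq s+4$ and $r \leq 2$ of the theorem.
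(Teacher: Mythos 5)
Your proposal is correct and follows essentially the same route as the paper: reduce to an uncountable algebraically closed field, apply \Cref{prop:aff-ci-low-index} with $M = N-4$ (your arithmetic check that $s \leq M \leq d_1+\dots+d_s-3$ is exactly equivalent to $N \geq 4+s$ and $r \leq 2$), pass to the projective closure via \Cref{prop:homogenization-monomial-order}, and conclude by the degeneration argument using \Cref{lem:very-general}, \Cref{lem:torsion-order-degeneration}, \Cref{lem:torsion-order-and-CH0}, and \Cref{lem:properties-of-torsion-order}. This matches the paper's proof of \Cref{thm:ci-torsion-order-low-index} step for step.
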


\begin{remark}
    \Cref{thm:ci-torsion-order-low-index} covers the complete intersection of $s_1$ cubic hypersurface and $s_2$ quadrics in $\CP^{3s_1 + 2s_2 + 1}$, in particular the complete intersection of $s$ quadrics in $\CP^{2s + 1}$. These cases are not contained in \Cref{thm:ci-torsion-order}.
    The special case of a $(2,3)$-fourfold, originally due to Skauli \cite{Ska23}, was recently reproved by Fiammengo--Lüders \cite{FL25} using the methods of \cite{PS23,LS24}. The input from \cite{HPT18} here is replaced with the stable irrationality result of cubic threefolds due to Engel--de Gaay Fortman--Schreieder \cite{EGFS25}, which builds on \cite{Voi17}.
\end{remark}

\begin{proof}
    We prove the theorem by essentially the same argument as in \Cref{thm:ci-torsion-order}. 
    Since the torsion order and the notion of very general behave well under field extensions, we can assume that $k$ is algebraically closed and uncountable.
    By \Cref{prop:aff-ci-low-index}, there exists an affine complete intersection $X^\circ$ of multidegree $(d_1,\dots,d_s)$ in $\aff^N_k$ such that $\Tor^{\Z/2}(X^\circ,W^\circ) = 2$ for some non-empty closed subscheme $W^\circ \subset X^\circ$. 
    Moreover, the leading monomial of a set of defining equations of $X^\circ$ are relatively prime for some graded monomial ordering.
    Let $X$ be the closure of $X^\circ$ in $\CP^N$. \Cref{prop:homogenization-monomial-order} shows that $X$ is a complete intersection in $\CP^N$ of multidegree $(d_1,\dots,d_s)$ such that
    $$
        \Tor^{\Z/2}(X,W) = \Tor^{\Z/2}(X^\circ,W^\circ) = 2
    $$
    for the non-empty closed subset $W \subset X$ satisfying $X \setminus W = X^\circ \setminus W^\circ \subset X$.

    Let $X_{d_1,\dots,d_s}$ be a very general complete intersection in $\CP^N$ of multidegree $(d_1,\dots,d_s)$.
    Since the base change to an algebraically closed field extension does not affect the torsion order by \Cref{lem:properties-of-torsion-order}, we can assume that $X_{d_1,\dots,d_s}$ degenerates to $X$ by \Cref{lem:very-general} \ref{item:very-gen:degeneration}.
    By choosing $\dim X$ hyperplane sections through a closed point of $W$ in the total space of the degeneration, we can assume that there exists a non-empty zero-dimensional closed subscheme $W_{d_1,\dots,d_s} \subset X_{d_1,\dots,d_s}$ specializing to a closed subscheme of $W$. Thus, \Cref{lem:torsion-order-degeneration} implies that
    $$
        2 \mid \Tor^{\Z/2}(X_{d_1,\dots,d_s},W_{d_1,\dots,d_s}).
    $$
    In particular, we find that $2$ divides $\Tor(X_{d_1,\dots,d_s})$ by \Cref{lem:properties-of-torsion-order}, as $\Tor(X_{d_1,\dots,d_s}) = \infty$ or $\CH_0(X_{d_1,\dots,d_s}) \cong \Z$ by \Cref{lem:torsion-order-and-CH0}.
\end{proof}

\begin{proof}[Proof of \Cref{thm:intro-ci-2-torsion}:]
    The case $r \leq 2$ is exactly \Cref{thm:ci-torsion-order-low-index}. We turn to the other case.
    Let $d_i \geq 4$ be a integers and set $n := d_i - 2 \geq 2$. A small computation shows that
    $$
        2^n + \sum\limits_{j=0}^{n-1} \binom{n}{j} \left\lfloor \frac{j}{2} \right\rfloor - 2 = 2^n + (n-1)2^{n-2} -\left\lfloor \frac{n}{2} \right\rfloor - 2 = (n+3)2^{n-2} - \left\lfloor \frac{n+4}{2} \right\rfloor,
    $$
    see e.g.~\cite[Lemma 7.4]{LS24}. Thus, \Cref{thm:ci-torsion-order} with $m = 2$ implies the theorem.
\end{proof}

\subsection{Hypersurfaces in products of projective spaces}

We aim to compactify $\aff^N$ to a product of projective spaces. This leads to new bounds on the torsion order of hypersurfaces in products of projective spaces and subsequently also statements about the irrationality of such hypersurfaces.

\begin{proposition}\label{prop:aff-hypers-prod}
    Let $k$ be an uncountable algebraically closed field and let $n,m \geq 2$ and $s \geq 0$ be integers such that $m \in k^\ast$.
    Then for any $(s+1)$-tuples $$
        (M_0,M_1,\dots,M_s), \, (d_0,d_1,\dots,d_s) \in \Z_{\geq 1}^{s+1}
    $$
    satisfying $d_0 \geq n+m$, $d_i \geq M_i + 1$ for all $i \in \{1,\dots,s\}$, and
    \begin{equation}\label{eq:prop_aff-hypers-prod:dimension-bound-hypers}
        4 \leq M_0 \leq n + 2^n - 1 + \sum\limits_{j=0}^{n-1} \binom{n}{j} \left\lfloor \frac{j}{m} \right\rfloor,
    \end{equation}
    there exists an irreducible polynomial $f \in k[y_{0,1},\dots,y_{0,M_0},y_{1,1},\dots,y_{1,M_1},y_{2,1},\dots,y_{s,M_s}]$ of multidegree $(d_0,d_1,\dots,d_s)$ such that
    $$
        \Tor^{\Z/m}\left(\Spec k[y_{0,1},\dots,y_{s,M_s}]/(f), \Spec k[y_{0,1},\dots,y_{s,M_s}]/(f,\partial_{y_{0,M_0}} f)\right) = m
    $$
\end{proposition}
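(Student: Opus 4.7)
The plan is to use \Cref{thm:base-examples-hypersurfaces} to produce an initial polynomial in the $y_{0,*}$-variables and then iteratively apply \Cref{thm:add-hypers} \ref{item:thm_add-hypers:increase-deg} to introduce the remaining blocks of variables one at a time while keeping the degree in each block under control. First I would apply \Cref{thm:base-examples-hypersurfaces} with $N := M_0 - 1$, so that the bound $3 \le N \le n + 2^n - 2 + \sum_{l=0}^{n-1}\binom{n}{l}\lfloor l/m\rfloor$ coincides with the given hypothesis $4 \le M_0 \le n + 2^n - 1 + \sum_{l=0}^{n-1}\binom{n}{l}\lfloor l/m\rfloor$, and with $d := d_0 \ge n + m$. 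After relabeling $x_i \mapsto y_{0,i}$ for $1 \le i \le M_0 - 1$ and $z \mapsto y_{0,M_0}$, this yields an irreducible polynomial $f_0 \in k[y_{0,1}, \ldots, y_{0,M_0}]$ of degree $d_0$ satisfying condition \eqref{eq:condition-star2} and
$$
\Tor^{\Z/m}\bigl(\Spec k[y_{0,*}]/(f_0), \Spec k[y_{0,*}]/(f_0, \partial_{y_{0,M_0}} f_0)\bigr) = m.
$$
In particular $f_0$ fulfills the assumptions of \Cref{thm:add-hypers} with $r = 0$, with the polynomial $l$ of property \ref{item:thm_add-hypers:tor-order} equal to $1$.

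Next, I would iterate \Cref{thm:add-hypers} \ref{item:thm_add-hypers:increase-deg} for $i = 1, 2, \ldots, s$: at step $i$ apply it to $f_{i-1}$ with added degree $d := d_i - 1$ and $M := M_i$ added variables, subsequently renamed $y_{i,1}, \ldots, y_{i,M_i}$. The required inequality $d \ge M \ge 1$ is exactly the assumption $d_i \ge M_i + 1 \ge 2$. Setting $f_i := \check{f}_{i-1}$, the resulting polynomial is irreducible and continues to satisfy \labelcref{item:thm_add-hypers:integral,item:thm_add-hypers:ratl-pt,item:thm_add-hypers:tor-order}; by \Cref{rem:to-add-hypers}\,(a) the polynomial $l$ in \ref{item:thm_add-hypers:tor-order} remains equal to $1$ throughout, and condition \eqref{eq:condition-star2} is maintained, allowing further iteration. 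After step $s$, $f := f_s$ is an irreducible element of $k[y_{0,1}, \ldots, y_{s,M_s}]$ and the desired torsion order identity is precisely property \ref{item:thm_add-hypers:tor-order} at the final step.

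The main obstacle is to verify that $\deg_{y_{j,*}} f = d_j$ for every $0 \le j \le s$, not merely the joint statement $\deg_{x,z} \check{f} = \deg f$ that the statement of \Cref{thm:add-hypers} records. For this I would unpack the explicit formula used in the proof of that theorem, namely
$$
\check{f} = t_2 + (f + w_1 + \ldots + w_M)\bigl(t_1 - w_M g - t(w_M - \delta_{M+1,2}) h\bigr),
$$
where $g$ and $h$ are polynomials in the $w$-variables alone. Since the input $f$ enters only linearly and is multiplied by a polynomial supported in the new $w$-block, the degree of $\check{f}$ in any prior block $y_{j,*}$ equals the degree of $f$ in $y_{j,*}$. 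By induction on $i$ this gives $\deg_{y_{j,*}} f_i = d_j$ for all $j \le i$, which yields the required multidegree $(d_0, d_1, \ldots, d_s)$ after step $s$. This block-by-block degree bookkeeping is the only genuinely new ingredient beyond directly invoking \Cref{thm:base-examples-hypersurfaces,thm:add-hypers}.
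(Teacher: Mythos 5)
Your proposal is correct and follows essentially the same route as the paper: invoke \Cref{thm:base-examples-hypersurfaces} with $N = M_0 - 1$ and degree $d_0$, then iterate \Cref{thm:add-hypers}~(b) with added degrees $d_i - 1$ and $M_i$ added variables, the hypothesis $d_i \geq M_i + 1$ being exactly what makes each application legitimate. Your extra check that each previous block's degree is preserved (via the explicit form of $\check{f}$, in which the old polynomial enters linearly against factors in the new variables only) is a point the paper leaves implicit, and it is a correct and welcome addition rather than a deviation.
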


\begin{proof}
    Fix (non-negative) integers $n$, $m$, and $s$ as in the statement of the proposition. Let $M_0,M_1,\dots,M_s,d_0,d_1,\dots,d_s$ be positive integers satisfying $d_0 \geq n+ m$, $d_i \geq M_i + 1$ for each $i \in \{1,\dots,s\}$, and \eqref{eq:prop_aff-hypers-prod:dimension-bound-hypers}.

    Since $d_0 \geq n + m$ and $M_0$ satisfies the bounds in \eqref{eq:prop_aff-hypers-prod:dimension-bound-hypers}, \Cref{thm:base-examples-hypersurfaces} shows that there exists an irreducible polynomial $f \in k[y_{0,1},\dots,y_{0,M_0}]$ of degree $d_0$ satisfying
    $$
        \Tor^{\Z/m}\left(\Spec k[y_{0,1},\dots,y_{0,M_0}]/(f), \Spec k[y_{0,1},\dots,y_{0,M_0}]/(f,\partial_{y_{0,1}} f) \right) = m
    $$
    and condition \eqref{eq:condition-star2} with $z = y_{0,1}$.
    Hence, the proposition holds for $s = 0$ and we can assume now that $s \geq 1$. 
    Note that the conditions \labelcref{item:thm_add-hypers:integral,item:thm_add-hypers:ratl-pt,item:thm_add-hypers:strongly-rat,item:thm_add-hypers:tor-order} in \Cref{thm:add-hypers} are satisfied for $f$ and the integers $n = M_0 - 1$ and $r = 0$.

    By repeatedly applying \Cref{thm:add-hypers} \ref{item:thm_add-hypers:increase-deg} to $f$ with added degrees $d_1-1,\dots,d_s-1$ and $M_1,\dots,M_s$ added variables, we obtain an irreducible polynomial
    $$
        \check{f} \in k[y_{0,1},\dots,y_{0,M_0},y_{1,1},\dots,y_{1,M_1},y_{2,1},\dots,y_{s,M_s}]
    $$
    of multidegree $(d_0,d_1,\dots,d_s)$ such that 
    $$
        \Tor^{\Z/m}\left(\Spec k[y_{0,1},\dots,y_{s,M_s}]/(\check{f}),\Spec k[y_{0,1},\dots,y_{s,M_s}]/(\check{f},\partial_{y_{0,M_0}} \check{f})\right) = m,
    $$
    which finishes the proof of the proposition.
\end{proof}

\begin{theorem}\label{thm:torsion-hyper-in-prod}
    Let $k$ be a field and let $n,m \geq 2$ be integers. Then the torsion order of a very general hypersurface in $\CP^{M_0} \times \dots \times \CP^{M_s}$ of multidegree $$
        (d_0,d_1,\dots,d_s) \geq (n+m,M_1+1,\dots,M_s+1)
    $$ is divisible by $m$, if $$
        4 \leq M_0 \leq n + 2^n - 1 + \sum\limits_{l=0}^{n-1} \binom{n}{l} \left\lfloor \frac{l}{m} \right\rfloor.
    $$
\end{theorem}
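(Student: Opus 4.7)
The plan is to mimic the proof of \Cref{thm:ci-torsion-order}, replacing the projective closure in a single $\CP^N$ by a multi-projective closure in a product of projective spaces. First I reduce to the case where $k$ is algebraically closed and uncountable using \Cref{lem:properties-of-torsion-order} \ref{item:lem_properties-tor-order:field-extension} and \Cref{lem:very-general} \ref{item:very-gen:field-ext}, and then I apply \Cref{prop:aff-hypers-prod} with the given $M_0, \dots, M_s$ and $d_0, \dots, d_s$ to produce an irreducible polynomial $f \in k[y_{0,1}, \dots, y_{s,M_s}]$ of multidegree $(d_0, d_1, \dots, d_s)$ such that $\Tor^{\Z/m}(X^\circ, W^\circ) = m$, where $X^\circ := \Spec k[y_{0,1}, \dots, y_{s,M_s}]/(f)$ and $W^\circ := X^\circ \cap \{\partial_{y_{0,M_0}} f = 0\}$.

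Next I take the multi-projective closure. Introducing homogenizing variables $y_{0,0}, \dots, y_{s,0}$, the multi-homogenization $f^h := \prod_{i=0}^{s} y_{i,0}^{d_i} \cdot f(y_{0,1}/y_{0,0}, \dots, y_{s,M_s}/y_{s,0})$ is multihomogeneous of multidegree $(d_0, \dots, d_s)$. Since $f$ has degree exactly $d_i$ in the block $y_{i,1}, \dots, y_{i,M_i}$, the polynomial $f^h$ is not divisible by any $y_{i,0}$, so its irreducibility follows from that of $f$ by dehomogenizing. Hence $X := \{f^h = 0\} \subset \CP^{M_0} \times_k \cdots \times_k \CP^{M_s}$ is an integral hypersurface of the prescribed multidegree, and $X^\circ$ sits inside $X$ as the standard affine chart $\{y_{0,0} = \cdots = y_{s,0} = 1\}$. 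Letting $W \subset X$ be the unique reduced closed subscheme with $X \setminus W = X^\circ \setminus W^\circ$, the fact that the relative torsion order depends only on the complement yields $\Tor^{\Z/m}(X, W) = m$. This step is simpler than the corresponding one in \Cref{thm:ci-torsion-order}, where Gröbner bases (\Cref{prop:homogenization-monomial-order}) were needed to control the single-space closure, because multi-homogenization preserves multidegrees automatically.

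Finally, I transfer the conclusion to a very general hypersurface $X_{d_0, \dots, d_s}$ of multidegree $(d_0, \dots, d_s)$ by a standard degeneration argument. By \Cref{lem:very-general} \ref{item:very-gen:degeneration} together with the base-change invariance of the torsion order, I may assume $X_{d_0, \dots, d_s}$ degenerates to $X$. Cutting the total space of the degeneration with $\dim X$ general hyperplanes through a closed point of $W$ produces a zero-dimensional closed subscheme $\mathcal{W}$ whose restriction to the special fibre lies in $W$, and \Cref{lem:torsion-order-degeneration} then gives $m = \Tor^{\Z/m}(X, W) \mid \Tor^{\Z/m}(X_{d_0, \dots, d_s}, W_{d_0, \dots, d_s})$, with $W_{d_0, \dots, d_s} := \mathcal{W} \cap X_{d_0, \dots, d_s}$ zero-dimensional. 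Smooth hypersurfaces in $\CP^{M_0} \times_k \cdots \times_k \CP^{M_s}$ of dimension at least two satisfy $h^{1,0} = 0$ by the Lefschetz hyperplane theorem (and here $\dim X \geq 3$), so \Cref{lem:torsion-order-and-CH0} forces either $\Tor(X_{d_0, \dots, d_s}) = \infty$ or $\CH_0(X_{d_0, \dots, d_s}) \cong \Z$; in either case \Cref{lem:properties-of-torsion-order} concludes that $m \mid \Tor(X_{d_0, \dots, d_s})$. The main obstacle has already been absorbed into \Cref{prop:aff-hypers-prod}, whose construction via the inductive framework of \Cref{thm:add-hypers} does the real work; the compactification and degeneration steps are then routine.
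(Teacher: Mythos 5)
Your argument follows the paper's proof of \Cref{thm:torsion-hyper-in-prod} essentially step for step: the reduction to an uncountable algebraically closed base field, the application of \Cref{prop:aff-hypers-prod}, the multi-homogenization (where, as you correctly note, no Gr\"obner-basis input is needed since the ideal is principal and the multidegree is exact in each block), and the degeneration argument via \Cref{lem:very-general}, \Cref{lem:torsion-order-degeneration}, \Cref{lem:torsion-order-and-CH0} and \Cref{lem:properties-of-torsion-order}.

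The one place where you deviate is the justification of $H^0(X_{d_0,\dots,d_s},\Omega^1_{X_{d_0,\dots,d_s}})=0$, and there your argument has a gap: you invoke the Lefschetz hyperplane theorem, but that is a statement about Betti numbers over $\C$ (or, via Hodge theory, over fields of characteristic $0$), whereas the theorem is stated over an arbitrary field, and after your reduction $k$ may well have positive characteristic. In characteristic $p$ you cannot conclude $h^{1,0}=0$ from $b_1=0$, nor can you transfer the vanishing from a characteristic-$0$ lift by specialization (semicontinuity of $h^0(\Omega^1)$ goes the wrong way), and the Nakano/Akizuki--Kodaira-type vanishing that would make a ``Lefschetz for $\Omega^1$'' argument work may fail in positive characteristic. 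The paper instead establishes the vanishing by a direct coherent-cohomology computation: using the conormal sequence of the hypersurface in $\CP^{M_0}\times\dots\times\CP^{M_s}$ and the K\"unneth formula together with the explicit cohomology of line bundles on projective spaces (this is where the hypothesis $M_0>1$ enters). Replacing your appeal to Lefschetz by this characteristic-free computation closes the gap; the rest of your proof is correct as written.
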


\begin{proof}
    Since the notion of very general is stable under field extension by \Cref{lem:very-general} \ref{item:very-gen:field-ext} and the torsion order of a $k$-variety is divisible by the torsion order of its base change to a field extension, we can assume without loss of generality that the field $k$ is algebraically closed and uncountable.
    
    Let $M_0,\dots,M_s \in \Z_{\geq 1}$ be positive integers such that 
    $$
        4 \leq M_0 \leq n + 2^n - 1 + \sum\limits_{l=0}^{n-1} \binom{n}{l} \left\lfloor \frac{l}{m} \right\rfloor.
    $$
    For any $(s+1)$-tuple $(d_0,\dots,d_s) \in \Z_{\geq 1}^{s+1}$ with $d_0 \geq n +m$ and $d_i \geq M_i + 1$ for $i \in \{1,\dots,s\}$, \Cref{prop:aff-hypers-prod} shows that there exists an irreducible polynomial $$
        f^\circ \in k[y_{0,1},\dots,y_{0,M_0},y_{1,1},\dots,y_{1,M_1},y_{2,1},\dots,y_{s,M_s}]
    $$
    of multidegree $(d_0,d_1,\dots,d_s)$ such that the $\Z/m$-torsion order of the integral $k$-variety
    $$
        X^\circ := \{f^\circ = 0\} \subset \aff^{M_0} \times \dots \times \aff^{M_s}
    $$
    relative to some closed subscheme $W^\circ \subset X^\circ$ is equal to $m$.

    Let $Y := \CP^{M_0} \times \dots \times \CP^{M_s}$ be the product of projective space and view $\aff^{M_0 + \dots + M_s} \subset Y$ as a standard open subscheme.
    Let $X \subset Y$ be the closure of the locally closed subscheme $X^\circ$ inside $Y$ and let $f \in k[y_{0,0},\dots,y_{0,M_0},y_{1,0},\dots,y_{1,M_1},y_{2,0},\dots,y_{s,M_s}]$ be the (multi-)homogenization of $f^\circ$. Then it follows immediately that
    $$
        X = \{f = 0\} \subset Y = \CP^{M_0} \times \dots \times \CP^{M_s},
    $$
    is an integral hypersurface of multidegree $(d_0,d_1,\dots,d_s)$ satisfying $\Tor^{\Z/m}(X,W) = m$ for some closed non-empty subscheme $W \subset X$, in fact $W$ is the unique closed und reduced subscheme such that $X \setminus W = X^\circ \setminus W^\circ \subset X$.

    Up to a base change to an algebraically closed field extension, which does not a effect the torsion order (\Cref{lem:properties-of-torsion-order}), we can assume that a very general hypersurface $X_{d_0,\dots,d_s}$ of multidegree $(d_0,d_1,\dots,d_s)$ in $\CP^{M_0} \times \dots \times \CP^{M_s}$ degenerates to $X$, see \Cref{lem:very-general} \ref{item:very-gen:degeneration}.

    Consider the total space $\mathcal{X}$ of the degeneration, which is a hypersurface in the product of projective spaces $\CP^{M_0} \times \dots \times \CP^{M_s}$ over some discrete valuation ring.
    The intersection of $\mathcal{X}$ with $M_0 + \dots + M_s -1$ general hyperplane sections through a closed point in $W$ gives a closed subscheme of relative dimension $0$, whose intersection with $W$ is non-empty. Thus we can assume that there exists a non-empty closed subscheme $\mathcal{W} \subset \mathcal{X}$ of relative dimension $0$ such that the intersection with $X$ is contained in $W$.

    Applying \Cref{lem:torsion-order-degeneration} we find that
    \begin{equation}\label{eq:thm:hyp-in-product:relative-tor-ord}
        \Tor^{\Z/m}(X_{d_0,\dots,d_s},\mathcal{W} \cap X_{d_0,\dots,d_s}) = \Tor^{\Z/m}(X,W) = m,
    \end{equation}
    where $\mathcal{W} \cap X_{d_0,\dots,d_s} \subset X_{d_0,\dots,d_s}$ is a non-empty zero-dimensional closed subscheme.

    A simple computation using the Künneth formula shows that $H^{1,0}(X_{d_0,\dots,d_s}) = 0$, which uses that $M_0 > 1$. Thus \Cref{lem:torsion-order-and-CH0} shows that $\Tor(X_{d_0,\dots,d_s}) = \infty$ or $\CH_0(X_{d_0,\dots,d_s})= \Z$.
    In the latter case, \Cref{lem:properties-of-torsion-order} and \eqref{eq:thm:hyp-in-product:relative-tor-ord} show that the $\Tor(X_{d_0,\dots,d_s})$ is divisible by $m$.
\end{proof}

\begin{proof}[Proof of \Cref{thm:intro-hyper-in-prod}:]
    Let the assumptions be as in the statement of \Cref{thm:intro-hyper-in-prod}. We set $n := \log_2(M_1)$ and note that $n \geq \log_2(4) = 2$. Moreover, $$
        M_1 = 2^n \leq 2^n + n -1 + \sum\limits_{j=0}^{n-1} \binom{n}{j} \left\lfloor \frac{j}{m} \right\rfloor.
    $$
    Thus the statement follows directly from \Cref{thm:torsion-hyper-in-prod}.
\end{proof}

\subsection{Hypersurfaces in Grassmannians}

To illustrate the flexibility of the method, we consider also hypersurfaces in Grassmannians over $\C$. Recall briefly some standard facts of Grassmannians, see e.g.~\cite[Section 3.2]{EH16}. Let $k$ be a field and $l,n \in \Z_{\geq 1}$ be positive integers such that $l \leq n$. We denote the Grassmannian variety, whose closed points parametrize $l$-dimensional vector subspaces of $k^{n}$, by $\Grass(l,n)$. Throughout this section, we fix a Plücker embedding
$$
    \iota_{Pl} \colon \Grass(l,n) \hookrightarrow \CP_k^{\binom{n}{l}-1} =: \CP,
$$
which is a closed embedding. Let $U \subset \CP$ be a standard open affine chart of the projective space $\CP$. Then it follows from the construction of the Grassmannian that the scheme-theoretic intersection $V := U \cap \Grass(l,n) \subset \CP$ is isomorphic to $\aff^{l(n-l)}$.
In fact, $V$ is the intersection of $\dim U - \dim V$ hyperplanes in $U$ see e.g. \cite[Proposition 3.2]{EH16}.

\begin{lemma}\label{lem:hypers-in-Grass}
    Let the notation be as above. Then for any integral affine hypersurface $X^\circ \subset V \cong \aff^{l(n-l)}$ of degree $d$, there exists an integral affine hypersurface $Y^\circ \subset U \cong \aff^{\binom{n}{l}-1}$ of degree $d$ such that the scheme-theoretic closure $X$ of $X^\circ$ in $\Grass(l,n)$ is isomorphic to $$
        X \cong Y \times_{\CP} \Grass(l,n),
    $$
    where $Y \subset \CP$ denotes the scheme-theoretic closure of $Y^\circ$ in $\CP = \CP^{\binom{n}{l}-1}$.
\end{lemma}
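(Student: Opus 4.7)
The plan is to directly lift the defining polynomial of $X^\circ$ from $V$ to a polynomial on $U$ of the same degree, exploiting the hypothesis that $V$ is cut out in $U$ by $r := \binom{n}{l}-1-l(n-l)$ hyperplanes, and then to verify the scheme-theoretic identity $X \cong Y \times_{\CP} \Grass(l,n)$ by a component analysis.

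First I choose affine coordinates $z_1,\dots,z_N$ on $U \cong \aff^N$, with $N := \binom{n}{l}-1$, so that $V = \{z_{M+1} = \dots = z_N = 0\}$ for $M := l(n-l)$, and so that the restrictions $z_1|_V,\dots,z_M|_V$ form affine coordinates on $V \cong \aff^M$. Let $f \in k[z_1,\dots,z_M]$ be the irreducible defining polynomial of $X^\circ$, of degree $d$. Viewing $f$ as an element $F \in k[z_1,\dots,z_N]$ not involving the variables $z_{M+1},\dots,z_N$, the polynomial $F$ remains irreducible of degree $d$, and I set $Y^\circ := \{F = 0\} \subset U$. Since the ideal of $V$ in $U$ is $(z_{M+1},\dots,z_N)$ and $F$ reduces to $f$ modulo this ideal, $Y^\circ$ is an integral affine hypersurface of degree $d$ with $Y^\circ \cap V = X^\circ$ scheme-theoretically.

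Next, writing $Y := \overline{Y^\circ}$ for the scheme-theoretic closure in $\CP$, a degree-$d$ hypersurface defined by the homogenization $\tilde F$, I need to verify that $Y \cap \Grass(l,n) = X$ as subschemes of $\Grass(l,n)$. The containment $X \subseteq Y \cap \Grass(l,n)$ is immediate from $X^\circ = Y^\circ \cap V \subseteq Y \cap \Grass(l,n)$ by taking closures in $\Grass(l,n)$. For the reverse inclusion, $Y \cap \Grass(l,n)$ has pure codimension one in $\Grass(l,n)$ (as $Y$ is a proper hypersurface and $\Grass(l,n)$ is irreducible), and any codimension-one component meeting $V$ non-trivially must coincide with $X$: its trace in $V$ lies inside $Y^\circ \cap V = X^\circ$, which is already irreducible of the same dimension $M-1$.

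The main obstacle I expect is ruling out spurious components of $Y \cap \Grass(l,n)$ contained entirely in the boundary divisor $\Grass(l,n) \setminus V$, and checking that $\tilde F|_{\Grass(l,n)}$ is square-free along the generic point of $X$. To resolve this I will allow perturbing the lift by $F \mapsto F + \sum_{i > M} z_i g_i(z)$ with $g_i$ of degree at most $d-1$; such perturbations preserve $F|_V = f$, the total degree, and (generically) irreducibility of $F$. A Bertini-type genericity argument in the finite-dimensional parameter space of admissible perturbations then shows that, for a suitable choice, $\tilde F$ vanishes on no irreducible component of $\Grass(l,n) \setminus V$ and defines a reduced divisor along $X$, yielding the desired scheme-theoretic identity $X \cong Y \times_{\CP} \Grass(l,n)$.
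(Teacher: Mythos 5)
Your lift $F$ is in substance the same one the paper uses: the paper's proof takes $Y^{\circ}=U\times_{V}X^{\circ}$ for the retraction $p_{V}\colon U\to V$, which is exactly the cylinder $\{f=0\}\subset U$, and then deduces the identification $X\cong Y\times_{\CP}\Grass(l,n)$ directly from this fibre-product description, with no perturbation; so the boundary analysis is precisely where your route diverges. Two remarks on your version. First, a repairable slip: you cannot choose coordinates that are linear on $U\subset\CP^{N}$ with $V=\{z_{M+1}=\dots=z_{N}=0\}$, since $V=U\cap\Grass(l,n)$ is the graph of the nonlinear ``higher minors'' map (for $\Grass(2,4)$ it is the affine Plücker quadric inside $U\cong\aff^{5}$); your admissible perturbations should therefore be by elements of $I(V)$ of degree at most $d$, not by $\sum_{i>M}z_{i}g_{i}$. (The multiplicity issue you also raise is automatic: $(Y\times_{\CP}\Grass(l,n))\cap V=X^{\circ}$ is reduced, so the divisor has multiplicity one along $X$.) Second, and this is the genuine gap: the entire content of the statement sits in the step you delegate to a ``Bertini-type genericity argument'', namely that some admissible lift has homogenization not vanishing identically on the boundary divisor $\Grass(l,n)\setminus V$. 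You assert this; you do not prove it.

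Moreover, no genericity argument over the space of lifts can prove it at this level of generality, because the required non-vanishing can fail for \emph{every} admissible lift. Take $\Grass(2,4)\subset\CP^{5}$, $U=\{p_{12}\neq0\}$, identify $V\cong\aff^{4}$ with the matrix entries of $A$, and let $X^{\circ}=\{\det A+1=0\}$, an integral affine hypersurface of degree $d=2$. Its closure $X$ in $\Grass(2,4)$ is the hyperplane section $\{p_{12}+p_{34}=0\}\cap\Grass(2,4)$, of class $\sigma_{1}$ in $\operatorname{Pic}(\Grass(2,4))\cong\Z\cdot\sigma_{1}$, whereas for any quadric $Y\subset\CP^{5}$ the scheme $Y\times_{\CP}\Grass(2,4)$ is either all of $\Grass(2,4)$ or an effective divisor of class $2\sigma_{1}$; in either case it is not $X$. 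Concretely, every degree-$2$ polynomial on $U$ restricting to $\det A+1$ on $V$ has the form $x_{13}x_{24}-x_{14}x_{23}+1+c_{0}\,(x_{34}-x_{13}x_{24}+x_{14}x_{23})$, and its homogenization restricts on $\Grass(2,4)$ to $p_{12}(p_{12}+p_{34})$, so every lift picks up the boundary Schubert divisor $\{p_{12}=0\}\cap\Grass(2,4)$. Hence integrality and degree of $X^{\circ}$ alone do not suffice: one needs in addition that $\overline{X^{\circ}}$ has class $d\sigma_{1}$ (equivalently, that some degree-$d$ lift is nonzero on the boundary divisor), and your proposal neither imposes this condition nor verifies it for the hypersurfaces from \Cref{prop:aff-ci} to which the lemma is applied, so the final step of your argument cannot be carried out as stated.
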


\begin{proof}
    Since $V$ is the intersection of hyperplanes in $U$, we can write the global sections of $U$ as $k[U] = k[V][y_1,\dots,y_{\dim U - \dim V}]$. In particular, there exist morphisms of $k$-varieties
    $$
        \iota_V \colon V \hookrightarrow U \quad \text{and} \quad p_V \colon U \longrightarrow V
    $$
    such that $p_V \circ \iota_V = \id_V$. We claim that the fibre product $Y^\circ = U \times_V X^\circ$ satisfies the property claimed in the lemma. Note that $Y^\circ$ is an integral affine hypersurface of degree $d$, as $k[U] = k[V][y_1,\dots,y_{\dim U - \dim V}]$.

    Consider the morphisms
    $$
        f \colon Y^\circ \xrightarrow{p_V} X^\circ \hookrightarrow \Grass(l,n) \quad \text{and} \quad g \colon Y^\circ \hookrightarrow \CP = \CP^{\binom{n}{l}-1},
    $$ 
    where the unspecificied arrows are the natural inclusions. Then it follows from the universal property of the fibre product that the scheme-theoretic image $\Ima f$ of $f$ is isomorphic to the fibre product
    $$
        \Ima f \cong \Ima g \times_{\CP} \Grass(l,n).
     $$
     Note that $Y = \Ima g$. Thus it suffices to show that $\Ima f$ is equal to the scheme-theoretic closure $X$ of $X^\circ$ in $\Grass(l,n)$.
     
     The equality of morphisms $p_V \circ \iota_V = \id_V$ implies that as sets $f(Y^\circ)$ is equal to the set-theoretic image of $X^\circ$ in $\Grass(l,n)$ via the natural inclusion.
     Since $Y^\circ$ and $X^\circ$ are reduced schemes, we find that $\Ima f$ is equal to the scheme-theoretic closure $X$ of $X^\circ$ in $\Grass(l,n)$, see e.g.\ \cite[Remark 10.32]{GW-AG1}.
\end{proof}

The lemma together with the examples of affine hypersurfaces in \Cref{thm:base-examples-hypersurfaces} shows the following theorem about the torsion order of very general hypersurfaces in Grassmannians.

\begin{theorem}\label{thm:torsion-Grass}
    Let $n',m \geq 2$ and $l,n \geq 1$ be positive integers such that $l \leq n$ and set $N := \binom{n}{l}-1$. Fix a Plücker embedding $\Grass(l,n) \hookrightarrow \CP^{N}$ of the Grassmannian $\Grass(l,n)$ over $\C$. Then the intersection of $\Grass(l,n)$ with a very general hypersurface of degree $d \geq n' + m$ in $\CP^{N}_\C$ has torsion order divisible by $m$, if
    \begin{equation}\label{eq:thm_hypers-Grass-dim-bound}
        4 \leq \dim \Grass(l,n) = l(n-l) \leq 2^{n'} - 1 + \sum\limits_{j=0}^{n'-1} \binom{n'}{j} \left\lfloor \frac{j}{m} \right\rfloor + d - m.
    \end{equation}
\end{theorem}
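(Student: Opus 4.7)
The plan is to realize the required hypersurface section inside a standard affine chart of the Grassmannian, transfer it via \Cref{lem:hypers-in-Grass} to an affine hypersurface in a chart of $\CP_\C^N$ (where $N := \binom{n}{l}-1$), and then degenerate a very general degree-$d$ hypersurface to its projective closure; the overall template follows the proof of \Cref{thm:ci-torsion-order}.

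Fix a standard affine chart $U \subset \CP_\C^N$ with $U \cong \aff^N_\C$ such that $V := U \cap \Grass(l,n) \cong \aff^{l(n-l)}_\C$, which is guaranteed by the Plücker embedding. Using the hypothesis \eqref{eq:thm_hypers-Grass-dim-bound}, choose integers $M \geq 0$ and $N_0 \geq 3$ with $N_0 + 1 + M = l(n-l)$, $M \leq d - n' - m$, and $N_0 \leq n' + 2^{n'} - 2 + \sum_{j=0}^{n'-1} \binom{n'}{j}\lfloor j/m \rfloor$. Apply \Cref{thm:base-examples-hypersurfaces} with degree $d_0 := d - M \geq n' + m$ and variable count $N_0$ to obtain an irreducible $f_0 \in \C[x_1,\dots,x_{N_0},z]$ of degree $d_0$ realizing conditions \labelcref{item:thm_add-hypers:strongly-rat,item:thm_add-hypers:integral,item:thm_add-hypers:tor-order,item:thm_add-hypers:ratl-pt} of \Cref{thm:add-hypers} with $r = 0$. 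If $M = 0$ set $\check f := f_0$; otherwise invoke \Cref{thm:add-hypers}\ref{item:thm_add-hypers:increase-deg} with added degree $M$ and $M$ added variables to get an irreducible polynomial $\check f$ of degree $d$ in $l(n-l)$ variables whose $\Z/m$-torsion order relative to $\{l \cdot \partial_z \check f = 0\}$ equals $m$ for some non-zero polynomial $l$. Put $X^\circ := \{\check f = 0\} \subset V$ with associated closed subscheme $W^\circ \subset X^\circ$.

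\Cref{lem:hypers-in-Grass} then produces an affine hypersurface $Y^\circ \subset U$ of degree $d$ whose projective closure $Y \subset \CP_\C^N$ satisfies $X \cong Y \times_{\CP} \Grass(l,n)$, where $X \subset \Grass(l,n)$ is the scheme-theoretic closure of $X^\circ$. Taking $W \subset X$ as the unique reduced closed subscheme with $X \setminus W = X^\circ \setminus W^\circ$ gives $\Tor^{\Z/m}(X, W) = m$. Now let $Y_d \subset \CP_\C^N$ be a very general degree-$d$ hypersurface. After base change to a larger algebraically closed field (which preserves torsion orders by \Cref{lem:properties-of-torsion-order}), \Cref{lem:very-general}\ref{item:very-gen:degeneration} provides a dvr degeneration $Y_d \rightsquigarrow Y$; pulling back along $\Grass(l,n) \hookrightarrow \CP_\C^N$ produces a flat family $Y_d \cap \Grass(l,n) \rightsquigarrow X$ in $\Grass(l,n)$. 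Cutting the total space by $\dim X = l(n-l) - 1$ general hyperplane sections (via the Plücker embedding) through a closed point of $W$ yields a relative-dimension-zero subscheme, and \Cref{lem:torsion-order-degeneration} combined with \Cref{lem:properties-of-torsion-order} gives $m \mid \Tor^{\Z/m}(Y_d \cap \Grass(l,n),\,\cdot\,)$. Since $l(n-l) \geq 4$, weak Lefschetz forces $H^{1,0}(Y_d \cap \Grass(l,n)) = 0$, and \Cref{lem:torsion-order-and-CH0} then lets us conclude $m \mid \Tor(Y_d \cap \Grass(l,n))$.

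The main technical point is the numerical decomposition $l(n-l) = N_0 + 1 + M$ satisfying simultaneously the bound on $N_0$ demanded by \Cref{thm:base-examples-hypersurfaces} and the bound $M \leq d - n' - m$ needed to apply \Cref{thm:add-hypers}\ref{item:thm_add-hypers:increase-deg} with $d_0 = d - M \geq n' + m$---it is precisely the hypothesis \eqref{eq:thm_hypers-Grass-dim-bound} that makes this choice possible.
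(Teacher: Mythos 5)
Your proposal is correct and follows essentially the same route as the paper: the paper invokes \Cref{prop:aff-ci} (whose $s=1$ case is exactly your combination of \Cref{thm:base-examples-hypersurfaces} with \Cref{thm:add-hypers} \ref{item:thm_add-hypers:increase-deg}), then uses \Cref{lem:hypers-in-Grass}, the same degeneration-plus-hyperplane-section argument, and concludes via \Cref{lem:torsion-order-and-CH0} and \Cref{lem:properties-of-torsion-order}. The only cosmetic difference is how $h^{1,0}(X_d)=0$ is obtained: you use the Lefschetz hyperplane theorem (valid here since $X_d$ is a smooth ample divisor of dimension at least $3$ in $\Grass(l,n)$), whereas the paper argues via the conormal sequence, the evenness of the Grassmannian's cohomology, and Kodaira vanishing --- both work over $\C$.
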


\begin{proof}
    Let $n',m,l,n$ be integers as in the theorem and write $N := \binom{n}{l}-1$. Since $l(n-l)$ satisfies the bound \eqref{eq:thm_hypers-Grass-dim-bound}, there exists by \Cref{prop:aff-ci} an affine hypersurface
    $$
        X^\circ \subset \aff^{l(n-l)}
    $$
    of degree $d \geq n' + m$ such that $\Tor^{\Z/m}(X^\circ,W^\circ) = m$ for some closed subscheme $W^\circ \subset X^\circ$.

    Let $X \subset \Grass(l,n)$ and $Y \subset \CP^N$ be the $k$-varieties constructed in \Cref{lem:hypers-in-Grass}. We denote by $W \subset X$ the unique reduced closed subscheme of $X$, whose complement is equal to $X^\circ \setminus W^\circ$ in $X$.
    
    Let $Y_d$ be a very general hypersurface of degree $d$ in projective space $\CP^N$ over $k$.
    Up to replacing $\C$ by an algebraically closed field extension, we can assume by \Cref{lem:very-general} \ref{item:very-gen:degeneration} that $Y_d$ degenerates to $Y$.
    Let $X_d$ be the intersection of $Y_d$ with $\Grass(l,n)$, which is embedded in $\CP^N$ via the fixed Plücker embedding. 
    Then $X_d$ degenerates to $X$. By choosing $l(n-l)-1$ general hyperplane sections, we can assume that there exists a non-empty closed subscheme of the total space of the degeneration of relative dimension $0$ such that the intersection with $X$ is contained in $W$. We denote the intersection of that closed subscheme with $X_{d}$ by $W_d$.
    
    Then \Cref{lem:torsion-order-degeneration} implies that $\Tor^{\Z/m}(X_d,W_d) = m$. We aim to deduce that $m$ divides $\Tor(X_d)$, which could be $\infty$. By \Cref{lem:torsion-order-and-CH0} and \Cref{lem:properties-of-torsion-order}, it suffices to prove that $H^{0}(X_d,\Omega_{X_d}^1) = 0$. Consider the long exact sequence associtated to the conormal short exact sequence of the (Cartier) divisor $X_d$ in $\Grass(l,n)$
    $$
        \dots \longrightarrow H^0(\Grass(l,n),\Omega^1_{\Grass(l,n)}) \longrightarrow H^0(X_d,\Omega_{X_d}^1) \longrightarrow H^1(X_d,O_{X_d}(-d)) \longrightarrow \dots
    $$
    It is well-known that the integral cohomology ring of a Grassmannian is generated by the Schubert classes and thus only non-trivial in even degree. In particular the first term in the above exact sequence vanishes.
    The vanishing of the last term follows directly from Kodaira's vanishing theorem applied to ample line bundles of the form $\mathcal{O}_{\Grass(l,n)}(j)$ on $\Grass(l,n)$ for $j > 0$. 
    Thus we have $H^0(X_d,\Omega_{X_d}^1) =0$ by exactness, which implies that $m$ divides $\Tor(X)$ by the above argument.
\end{proof}

\begin{remark}\label{rem:hypers-in-Grass}
    The assumption on the base field being $\C$ is mostly for simplicity and the argument also works over arbitrary fields up to the computation of $h^{1,0}(X_d) = 0$.
\end{remark}

\begin{proof}[Proof of \Cref{thm:intro-2-torsion-Grass}:]
    The statement follows from \Cref{thm:torsion-Grass} together with the equality
    $$
        \sum\limits_{j=0}^{n'-1} \binom{n'}{j} \left\lfloor \frac{j}{2} \right\rfloor = (n'-1) 2^{n'-2} - \left\lfloor \frac{n'}{2} \right\rfloor,
    $$
    see e.g.~\cite[Lemma 7.4]{LS24}. 
\end{proof}

\end{document}